\documentclass[twoside,11pt]{article}

\usepackage{blindtext}
\usepackage[abbrvbib, preprint]{jmlr2e}
\usepackage{math_commands}

\colorlet{blue}{black}



\usepackage{lastpage}
\jmlrheading{25}{2025}{1-\pageref{LastPage}}{1/21; Revised 5/25}{5/25}{21-0000}{Bohan Zhou,Shu Liu, Xinzhe Zuo and Wuchen Li}


\ShortHeadings{Accelerated MCMC}{Zhou, Liu, Zuo and Li}
\firstpageno{1}

\begin{document}

\title{Accelerated Markov Chain Monte Carlo Algorithms on Discrete States}

\author{\name Bohan Zhou \email bhzhou@ucsb.edu \\
       \addr Department of Mathematics\\
       University of California\\
       Santa Barbara, CA 93106, USA.
       \AND
       \name Shu Liu \email sliu11@fsu.edu \\
       \addr Department of Mathematics\\
       Florida State University\\
       Tallahassee, FL 32306, USA.
       \AND
       \name Xinzhe Zuo \email zxz@math.ucla.edu \\
       \addr Department of Mathematics\\
       University of California\\
       Los Angeles, CA 90095, USA.
       \AND
       \name Wuchen Li \email wuchen@mailbox.sc.edu \\     
       \addr Department of Mathematics\\
       University of South Carolina\\
       Columbia, SC 29208, USA.
       }

\editor{My editor}

\maketitle

\begin{abstract}
We propose a class of discrete state sampling algorithms based on Nesterov's accelerated gradient method, which extends the classical Metropolis-Hastings (MH) algorithm. The evolution of the discrete states probability distribution governed by MH can be interpreted as a gradient descent direction of the Kullback--Leibler (KL) divergence, via a mobility function and a score function. Specifically, this gradient is defined on a probability simplex equipped with a discrete Wasserstein-2 metric with a mobility function. This motivates us to study a momentum-based acceleration framework using damped Hamiltonian flows on the simplex set, whose stationary distribution matches the discrete target distribution. Furthermore, we design an interacting particle system to approximate the proposed accelerated sampling dynamics. The extension of the algorithm with a general choice of potentials and mobilities is also discussed. In particular, we choose the accelerated gradient flow of the relative Fisher information, demonstrating the advantages of the algorithm in estimating discrete score functions without requiring the normalizing constant and keeping positive probabilities. Numerical examples, including sampling on a Gaussian mixture supported on lattices or a distribution on a hypercube, demonstrate the effectiveness of the proposed discrete-state sampling algorithm. 
\end{abstract}

\begin{keywords}
  Markov chain Monte Carlo methods; Metropolis-Hastings algorithms;  Nesterov acceleration methods; Wasserstein metrics on graphs; discrete score functions; mobility functions.
\end{keywords}

\newenvironment{AMS}
{\bgroup\leftskip 20pt\rightskip 20pt \small\noindent{\bf AMS MSC:} }%
{\par\egroup\vskip 0.25ex}

\begin{AMS}
  65C05, 60J22, 82M31, 49Q22, 65K10
\end{AMS}

\section{Introduction}

Markov Chain Monte Carlo (MCMC) methods are algorithms to generate samples from a given probability distribution over a sampling space. A key advantage of the MCMC method is that it only requires knowledge of the target distribution up to a constant, making it well-suited for distributions with intractable normalizing constants—such as those arising in high-dimensional sampling spaces or in Bayesian posterior inference problems \citep{Mengersen1996Rates}. This advantage has led to widespread applications of MCMC algorithms in various scientific domains, including Bayesian inference \citep{Robert2004Monte}, computational physics and chemistry (e.g., simulating Ising models; see \cite{Metropolis1953Equations, Hastings1970MC, Landau2015Guide}), financial mathematics (e.g., risk modeling and option pricing; \cite{jackel2002monte, Kou2004Option, Jacquier2002Bayesian}), and machine learning (e.g., energy-based models; \cite{Teh2004Energy, Hinton2006Fast, Salakhutdinov2009Deep}).

In MCMC methods, the core idea is to construct a Markov chain with designed transition probabilities so that its stationary distribution is the target distribution $\pi$. Samples are then generated by running the chain for a sufficient number of steps. A well-known class of MCMC methods is the Langevin Monte Carlo (LMC) algorithm \citep{Roberts1996MALA1, Roberts1996MALA2}, which is based on the overdamped Langevin dynamics. LMC is designed to sample from a continuous sampling space by conducting gradient descent steps in the direction $\nabla \log \pi$, perturbed by an additive white noise. It is known that the evolution of the probability distribution $p$ under LMC is a gradient flow of the Kullback-Leibler (KL) divergence on the space of probability distributions with the Wasserstein-2 metric \citep{Villani2009OT}. Here the Wasserstein-2 gradient of the KL divergence represents the score function $\nabla \log \frac{p}{\pi}$. Classical MCMC algorithms frequently encounter challenges such as slow convergence and poor mixing. This has motivated the demand for accelerated sampling algorithms.  In recent decades, the Nesterov accelerated gradient (NAG) method \citep{Nesterov1983Accelerating, Su2016DE, Wibisono2018Sampling}, a refinement of momentum-based optimization, has been shown to improve the convergence speed of gradient descent algorithms. This naturally raises the question of whether a Nesterov-type acceleration can be formulated for sampling, leveraging the Wasserstein gradient flow structure. {\color{blue} This question has been investigated for continuous-space sampling from different approaches \citep{Cheng2018Underdamped, Taghvaei2019Accelerated, Ma2021Nesterov, Wang2022Accelerated,  Li2022Hessian,  zuo2024gradient, Majee2025MCMCnet}.  See \citep{Albritton2024Variational, Cao2023Convergence, brigati2024explicit} and the references therein for convergence analysis under various norms.} While there exists a rich literature on continuous-space sampling, many crucial sampling problems are defined over discrete spaces, as seen in statistical physics \citep{Landau2015Guide}, combinatorics \citep{Jerrum1996Markov}, and discrete probabilistic graphical models \citep{Jordan1999Introduction}. A natural question arises:

\textit{How can we design a Nesterov accelerated sampling algorithm on discrete state spaces? What is the accelerated dynamics with discrete score functions?}

In this paper, we mainly focus on sampling $\pi$ on a finite \textit{discrete} space \(V = \{1,2,\dots,n\}\). The Metropolis-Hastings (MH) algorithm \citep{Metropolis1953Equations, Hastings1970MC} serves as a foundational framework for sampling on discrete space. Given a candidate kernel $(q_{ij})$, leveraging the acceptance ratio, the MH algorithm designs a \textit{time-homogeneous} Markov chain, with a transition rate matrix $Q$ ($Q_{ij}$ depends on $(q_{ij})$) 
that satisfies the \textit{detailed-balance} condition $\pi_j Q_{ji}=\pi_i Q_{ij} $. 

\setstackgap{S}{6pt}
\begin{table}[hptb!]
    \centering
    \begin{tabularx}{\linewidth}{r|c|c}
    & Discrete-time & Continuous-time \\\hline
 \mycell MC   & \stackunder{$\P(X^{(k+1)}=j\mid X^{(k)}=i)=P_{ij}$}{\DA}  & $\P(X(t+h)=j\mid X(t)=i)\approx\delta_{ij}+Q_{ij}h$ \\
 FM & $p^{(k+1)}=p^{(k)}P$ in \eqref{eq:jump} \hspace{4em}\LBA& \stackunder{$\dot{p}(t)=pQ$ in \eqref{eq:Kolmogorov}}{\DA}\\
 GF & $p^{(k+1)}=p^{(k)}(I_n+Q\Delta t)$ in \eqref{eq:jump} & \stackunder{$\dot{p}(t)=-\nabla_p D_{\phi}(p\|\pi)\mathbb{K}(p) $ in \eqref{eq:Wgf_precond}}{\DA}\\
 HF & $p^{(k+1)}=p^{(k)}(I_n+\bar{Q}_{\psi}^r\Delta t)$ in \eqref{eq:Kolmogorov-Ham}, \eqref{eq:AMH-jump} \LA \hspace{0.5em} & \eqref{eq:fisher-general} is an instance of general framework \eqref{eq:AMH-matrix} 
    \end{tabularx}
    \caption{The conceptual flow in this work. ``MC'',``FM'',``GF'',``HF'' are abbreviations for Markov chain, forward master equation, gradient flow and Hamiltonian flow, respectively. We start with the Markov chain and end up with the damped Hamiltonian dynamics originated from Nesterov's accelerated gradient method. \label{tab:flow} }
\end{table}

Analogous to the LMC algorithm on the continuous state space, which can be regarded as a gradient flow on the probability space, recent work \citep{Maas2011gradient, Erbar2012Ricci, Karatzas2021Trajectorial} has shown that {\color{blue} the reversible Markov chain described in the forward master equation can be interpreted as the gradient flow of the KL divergence with respect to the discrete Wasserstein-$2$ metric, induced from a mobility function. The associated discrete Wasserstein-$2$ metric $\mathbb{K}(p)$ allows us to study the MCMC method as a first-order optimization algorithm on the probability simplex.}

In this paper, we propose a momentum-based acceleration framework for discrete sampling. Our work is  inspired by the interplay between the Wasserstein gradient flow structure of the MH algorithm and the NAG method. The key ingredients of our approach are summarized in \Cref{tab:flow}. In particular, we consider a \textit{damped Hamiltonian flow}, driven by the relative Fisher information:
\[
\mathrm{I}(p \| \pi)=\frac{1}{4}\sum_{i,j=1}^n \pi_i Q_{ij} \theta_{ij}(p)  \left(\log\frac{p_i}{\pi_i}-\log\frac{p_j}{\pi_j}\right)^2.
\]
The dynamics evolve according to
\begin{equation}\label{eq:fisher-general}
\left\{
\begin{aligned}
    &\frac{\mathrm{d}p_i(t)}{\mathrm{dt}} + \sum_{j \neq i} \pi_i Q_{ij} \theta_{ij}(p(t)) (\psi_j(t) - \psi_i(t)) = 0, \\
    &\frac{\mathrm{d}\psi_i(t)}{\mathrm{d}t} + \gamma(t) \psi_i(t) + \frac{1}{2} \sum_{j \neq i} \pi_i Q_{ij}\frac{\partial \theta_{ij}(p(t))}{\partial p_i}  (\psi_i(t) - \psi_j(t))^2 + \frac{\partial \mathrm{I}(p(t) \| \pi)}{\partial p_i} = 0\,.
\end{aligned}
\right.
\end{equation}  
The first equation is a discrete-state continuity equation that describes the evolution of the probability function \(p\). The second equation is a discrete Hamilton–Jacobi equation, which governs the dynamics of the momentum variable \( \psi \), incorporating both dissipative effects through the term \( \gamma(t) \psi_i \) and information-theoretic feedback via \( \frac{\partial \mathrm{I}(p \| \pi)}{\partial p_i} \). Here, \(\gamma(t)\) is a user-specified damping parameter. \((\theta_{ij}(p(t)))\) is a mobility weight matrix that may depend on $p(t)$. A particularly important choice is the \textit{logarithmic mean} or \textit{KL divergence mean}, given by
    \begin{equation}\label{eq:log-mean}
    \theta_{ij}(p) = \frac{ \frac{p_i}{\pi_i}-\frac{p_j}{\pi_j}}{\log \frac{p_i}{\pi_i} - \log \frac{p_j}{\pi_j}} = \frac{p_i}{\pi_i} \cdot \frac{1- \frac{\pi_i}{\pi_j}\frac{p_j}{p_i}}{\log \left( \frac{\pi_j}{\pi_i} \frac{p_i}{p_j} \right)}.
    \end{equation}

The proposed accelerated MCMC (aMCMC) dynamics in \eqref{eq:fisher-general} can be shown to guarantee the non-increasing behavior of its associated Hamiltonian function on a simplex set, defined as $\mathcal{H}(p, \psi) = \frac{1}{2} \psi \mathbb{K}(p) \psi^\top + \mathrm{I}(p \| \pi)$, and the strict positivity of $p(t)$. For numerical implementation, we employ a staggered scheme to discretize \eqref{eq:fisher-general} in time. Additionally, an interacting particle system on discrete states is introduced, with transition rates derived from a positive-negative decomposition of the gradient term \( \psi_j - \psi_i \). We refer the reader to \Cref{sec:alg} for a detailed discussion of the algorithmic design.

Numerical experiments presented in \Cref{sec:num} demonstrate that both the discrete time scheme and the particle swarm sampling-type algorithm achieve faster convergence to the target distribution \( \pi \) in \( \ell^2 \) norm, {\color{blue} compared with the MH method  implemented in multi-chains for both fixed total number of iterations and fixed wall-clock times.} Notably, the incorporation of score functions in the aMCMC algorithm yields a higher empirical accuracy, achieving an error of order \( \mathcal{O}(\frac{1}{M}) \), where \( M \) denotes the number of swarm particles. This marks a substantial improvement over the \( \mathcal{O}(\frac{1}{\sqrt{M}}) \) accuracy of the MH algorithm. 

This paper is organized as follows: \Cref{sub:MH} reviews the classical MCMC with the MH update. \Cref{sub:Maas} revisits the gradient flow formulation of the forward master equation, while \Cref{sub:Nesterov} discusses gradient and accelerated flows in Euclidean space \(\mathbb{R}^d\). Building on both perspectives, we introduce a class of aMCMC sampling algorithms on discrete-state spaces in \Cref{sec:acceleration}, with particular instances listed in \Cref{tab:AMH}. The properties of proposed dynamics are discussed in \Cref{sec:property}. \Cref{sec:alg} describes the numerical scheme and some practical techniques including initialization and restart. \Cref{sec:num} presents our numerical results. Lastly, we discuss our current limitations and future work in \Cref{sec:dis}.

\begin{table}[htpb!]
    \centering
    \begin{tabular}{c|c|c|c|c}
        Methods & $\theta(\frac{p_i}{\pi_i},\frac{p_j}{\pi_j})$ & potential $\mathcal{U}(p)$ & w/o $Z$ & strict positivity \\  \hline 
        \texttt{Chi-squared} & 1 &  $\frac{1}{2}\sum_{i=1}^n\frac{(p_i-\pi_i)^2}{\pi_i}$ & No & No \\
        \texttt{KL} & log-mean \eqref{eq:log-mean}  & $\sum_{i=1}^n p_i\log\frac{p_i}{\pi_i}.$ & Yes & No \\
        \texttt{log-Fisher} & log-mean \eqref{eq:log-mean}  & $\frac{1}{4}\sum_{i,j=1}^n \omega_{ij}(\log\frac{\pi_j}{\pi_i}\frac{p_i}{p_j})(\frac{p_i}{\pi_i}-\frac{p_j}{\pi_j}) $ & Yes & Yes \\
        \texttt{con-Fisher} & $\theta_{ij}$ & $\frac{1}{4}\sum_{i, j=1}^n  \omega_{ij}\theta_{ij} (\log\frac{\pi_j}{\pi_i}\frac{p_i}{p_j})^2 $ & Yes & Yes \\
    \end{tabular}
    \caption{Examples of aMCMC dynamics. The 2nd column is the mobility weight matrix $(\theta_{ij}(p))$. The 3rd column is the potential function $\mathcal{U}(p)$. The 4th column is referred to if this method requires to know the normalizing constant $Z$ of the target probability in prior. The 5th column is referred to if this method can ensure $p$ to stay strictly positive.}
    \label{tab:AMH}
\end{table}

\noindent\textbf{Notations}

The probability simplex $\mathbb{P}(V)$ supported on $V=\left\{1,2,\ldots,n\right\}$ is defined as 
    \begin{equation*}
      \P(V) = \left\{ p = [p_1, \dots, p_n] \in \mathbb{R}^n ~ \Big| ~ p_i \geqslant 0 ~ \textrm{for any } i, ~ \textrm{and} ~  \sum_{i=1}^n p_i = 1  \right\}.
    \end{equation*}
We adopt the convention that vectors are represented as row vectors. Denote $T_p\mathbb{P}(V)$ as the tangent space of $\mathbb{P}$ at $p$, then $T_p\mathbb{P}(V)=\{v=[v_1,\ldots,v_n] \in \mathbb{R}^n\ | \  v_1 + \ldots + v_n = 0 \}$. The target probability is denoted by $\pi=[\pi_1,\ldots,\pi_n]$, typically used as the stationary probability of the proposed dynamics governing the state variables $p(t)=[p_1(t),\ldots,p_n(t)]$. For simplicity, we assume that $\pi$ is strictly positive ($\pi_i>0$ for all $i$). The transition probability matrix is denoted by $P=(P_{ij})$, where $P_{ij}=\P(X^{(k+1)}=j \mid X^{(k)}=i)$ represents the probability of transitioning from node $i$ to node $j$. We adopt the row-sum-one convention, meaning $\sum_{j=1}^nP_{ij}=1$ for each $i$. Additionally, we define the associated transition rate matrix as $Q=(Q_{ij})$, where $Q_{ij}\geqslant 0$ for non-diagonal entries, and $\sum_{j=1}^n Q_{ij} =0$ for each $i$. Consequently, $Q_{ii}=-\sum_{j\neq i}Q_{ij}$. Let $\bar{A}^r$ denote the row-sum-zero projection of any matrix $A$ such that
$\left\{
\begin{aligned}
    &\bar{A}^r_{ij}=A_{ij},\quad\textrm{for } i \neq j;\\
    &\bar{A}^r_{ii}= -\sum_{j\neq i} A_{ij}.
\end{aligned}
\right.$ Thus for any transition rate matrix $Q$, $\bar{Q}^r=Q$. We occasionally use the notation $\bar{Q}^r$ in place of $Q$ to highlight that $Q$ is a row-sum-zero matrix. The stationary probability function $\pi$ can be defined to satisfy that either $\pi P=\pi$ or $\pi Q=0$.

Given a $Q$-matrix, it induces a directed weighted graph $G=(V,E,\omega)$ by: 1) $e_{ij}\in E$ if $Q_{ij}\neq 0$; 2) $\omega_{ij}=\pi_i Q_{ij}$. Under this choice, $\omega$ is row-sum-zero as well. Furthermore, if $Q$-matrix is reversible w.r.t its stationary probability $\pi$, i.e, $\pi_i Q_{ij}=\pi_j Q_{ji}$, then we say the associated Markov chain is \textit{reversible} and we obtain an undirected graph $G$ and a symmetric matrix $\omega$.

The identity matrix in $\mathbb{R}^{n\times n}$ is denoted by $I_n$. A row vector of size $n$ (a matrix of size $m\times n$) whose entries are ones is denoted by $\Id_n$ ($\Id_{m\times n}$, respectively). Similarly, $0_{n\times n}$ denotes the $n\times n$ zero matrix. $\R_+,\R_{\geqslant 0}$ are positive and nonnegative real numbers respectively. The orthogonal complement of the $\textrm{span}\{\Id_n\}$ is $\Id_n^{\perp}=\left\{v\in \R^n\mid \Id_n\cdot v =0\right\}$. For any square matrix $A\in\mathbb{R}^{n\times n}$, we denote $\mathrm{Ker}(A)=\{x=[x_1,\ldots,x_n]\ | \ xA=0 \}.$

\section{Preliminaries}

\subsection{Metropolis-Hastings as the Forward Master Equation\label{sub:MH} }

Consider a \textit{continuous-time reversible} Markov chain on a finite state space $V$. Let $Q$ be a transition-rate matrix of the Markov chain, and $p(0)=[p_1(0),\ldots,p_n(0)]$ be a probability function on $V$. The evolution dynamics of the probability function $p(t)$ with an initial data $p(0)$ is described by the forward master equation:
    \begin{equation}\label{eq:Kolmogorov}
     \frac{\mathrm{d}}{\mathrm{d}t} p_i= \sum_{j=1}^n p_j Q_{ji}=\sum_{j\neq i} p_j Q_{ji}-p_i Q_{ij},
    \end{equation}
whose stationary probability function is denoted by $\pi=[\pi_1,\ldots,\pi_n]$. The forward master equation can be represented in a matrix form as $\frac{\mathrm{d}}{\mathrm{d}t}p = p Q $, by using $Q_{ii}=-\sum_{j\neq i}Q_{ij}$. From the fundamental theorem of Markov chains, there is a unique, strictly positive stationary distribution $\pi$ if we further assume the Markov chain is ergodic.

One can discretize \eqref{eq:Kolmogorov} using the forward difference scheme. Given a step size $\Delta t>0$, the discrete-time update satisfies 
    \begin{equation}\label{eq:jump}
       p^{(k+1)}= p^{(k)} P, \quad P=I_n+ Q\Delta t\in\mathbb{R}^{n\times n},
    \end{equation}
where $I_n$ is the identity matrix. When $\Delta t$ is chosen sufficiently small, the matrix $P$ is a valid transition probability matrix so that \eqref{eq:jump} is a discrete-time jump process.

In a sampling algorithm, $Q$ must be designed to ensure that the probability distribution $p(t)$ converge to the given target distribution $\pi$. The Metropolis-Hastings algorithm proposes a generic way to construct a \textit{discrete-time} Markov-chain on $V$, that is ergodic (i.e., irreducible and aperiodic) and stationary with respect to $\pi$. Given a user-specified conditional density $q_{ij}=\P(Y=j\mid X=i)$, also known as the \textit{candidate kernel}, MH designs an acceptance-rejection matrix:
    \[A_{ij} := A(X=i,Y=j)=\left\{
    \begin{aligned}
    &\min\left\{\frac{\pi_j q_{ji}}{\pi_i q_{ij}},1\right\},\qquad &\pi_i q_{ij}>0;\\
    &1,\qquad &\pi_i q_{ij}=0.
    \end{aligned}
    \right.
    \]
A key advantage is that the acceptance-rejection probability depends on the ratio of $\frac{\pi_j}{\pi_i}$, eliminating the need to know the normalizing constant for $\pi$ \citep{Ottobre2016MCMC}. In this work we use the standard random walk MH scheme for $(q_{ij})$ as a benchmark for comparisons. The Metropolis-Hastings algorithm is presented in \Cref{alg:MH}. 

\begin{algorithm}[hp!tb]\caption{Metropolis-Hastings algorithms   \label{alg:MH}}
\SetKwInOut{Input}{Input}\SetKwInOut{Output}{Output}
    \Input{Initial data $X^{(0)}$, target distribution $\pi$, candidate kernel $q$, total iterations $N$.}
    \Output{The Markov chain $(X^{(i)})_{i=1}^N$.}
    \BlankLine
    \For{$i\leftarrow 0$ \KwTo $N$}{
    Propose a candidate $Y^{(i)} \sim q(y\mid X^{(i)})$\;
    Generate $u^{(i)} \sim \mathrm{uniform}[0,1]$\;
    Set $X^{(i+1)} = Y^{(i)} $ if $u \leqslant A(X^{(i)},Y^{(i)})$; otherwise $X^{(i+1)}=X^{(i)}$.
    }
\end{algorithm}

The acceptance-rejection matrix induces a transition rate matrix for \eqref{eq:Kolmogorov}, satisfying the detailed balance condition:
    \begin{equation}\label{eq:Q-MH}
        Q_{ij}^{\textrm{MH}}:= q_{ij}A_{ij}=\min \left\{\frac{\pi_j}{\pi_i} q_{ji},q_{ij}\right\}.
    \end{equation}

In summary, the MH algorithm yields a reversible and ergodic Markov chain on a discrete state space $V$. Starting from an arbitrary initial distribution $p(0)$, the time evolution of the probability distribution $p(t)$ under the MH dynamics is governed by \eqref{eq:Kolmogorov}, alternatively, the jump process $p^{(k)}$ arising from MH can be described by \eqref{eq:jump}, specifically using the transition rate matrix $Q^{\textrm{MH}}$. 

One advantage of $Q^{\textrm{MH}}$ is its time-homogeneity, which simplifies implementation in numerical experiments. Additionally, $Q^{\textrm{MH}}$ is dependent of the normalizing constant $Z$ for $\pi$. However, it may suffer from \textit{pseudo-convergence} in multimodal distribution \citep[see][Chapter 1.11.2]{Geyer2011Introduction}. In practice, MH sampling can proceed either by running a single long chain or by averaging over multiple shorter chains.

\citet{Schnakenberg1976Network} shows that given any transition rate probability matrix $Q$ and the target distribution $\pi$, there is a weighted directed graph representation $G=(V,E,\omega)$ of the forward master equation, where weight matrix $\omega$ is given by $(\omega_{ij})=(\pi_i Q_{ij})$. In the specific case where $Q=Q^{\mathrm{MH}}$, the detailed balance condition implies that $\omega_{ij}=\omega_{ji}$, rendering $G$ an undirected graph that permits self-loops but excludes multiple edges. Our primary focus in this work is sampling on the corresponding undirected graph $G=(V,E,\omega)$ with $Q=Q^{\textrm{MH}}$.

\subsection{Reversible Markov Chain as a Wasserstein Gradient Flow}\label{sub:Maas}

A series of seminal works \citep{Maas2011gradient, Erbar2012Ricci, Mielke2013Geodesic,Karatzas2021Trajectorial} interpret the reversible Markov chain on a discrete state space as the ``gradient flow" of the entropy functional over the probability manifold $\P(V)$, by introducing a graphical version of the Wasserstein metric, and exploiting the geodesic convexity of the entropy w.r.t this metric. In this subsection, we provide a brief review with a focus on the Wasserstein gradient flow and postpone the discussion on convexity to \Cref{sub:damping} and \Cref{app:Hessian}.

We refer interested readers to \citep{Otto2001Geometry, Ambrosio2008Gradient, Villani2009OT} for Wasserstein gradient flows (WGF) on continuous state spaces; and to \citep{Maas2011gradient, Chow2012Fokker} on discrete state spaces. In these studies, the probability manifold is endowed with a Riemannian metric known as the Wasserstein metric. The so-called \textit{Otto calculus} \citep{Otto2001Geometry} is then used to introduce a gradient flow interpretation on the probability manifold for a family of evolution equations.

We first recall approaches in \citep{Mielke2013Geodesic,Gao2024Master} to treat the gradient structure w.r.t the Riemannian metric. In optimization, it is equivalent to apply a preconditioning matrix---commonly known as \textit{Onsager's response} matrix $\mathbb{K}$---onto the flat gradient. Given a convex function $f:\mathbb{R}_{\geqslant 0} \mapsto \mathbb{R}$ that satisfies $f(1)=0$, $f(0)=\displaystyle\lim_{x\to 0+}f(x)$, and $f''(x)>0$ for $x>0$, we define the $f$-divergence on $\P(V)$ as
    \begin{equation}\label{eq:phi-divergence}
      \mathrm{D}_f(p\|\pi) = \sum_{i=1}^n f(\frac{p_i}{\pi_i})\pi_i.
    \end{equation}
By Jensen's inequality, $\D_{f}(p\|\pi)\geqslant f(1)=0$. A common choice of $f$ is $f(x) = x\log x $. This yields the Kullback–Leibler divergence $\mathrm{D}_{\textrm{KL}}(p\|\pi) = \sum_{i=1}^n p_i \log(\frac{p_i}{\pi_i})$.

Due to the detailed balance $\omega_{ij}=\pi_i Q_{ij}=\pi_j Q_{ji}=\omega_{ji}$, we can rewrite the  forward master equation \eqref{eq:Kolmogorov} as
    \begin{equation}\label{eq:Kol-in-theta}
        \begin{aligned}
        \frac{\mathrm{d}}{\mathrm{d}t}p_i&=\sum_{j\neq i} p_j Q_{ji}-p_i Q_{ij} = \sum_{j\neq i} \frac{p_j}{\pi_j} \pi_j Q_{ji} - \frac{p_i}{\pi_i} \pi_i Q_{ij}=\sum_{j\neq i} \omega_{ij}(\frac{p_j}{\pi_j}-\frac{p_i}{\pi_i})\\
        &=\sum_{j\neq i} \omega_{ij}\theta_{ij}(p)(f^{\prime}(\frac{p_j}{\pi_j})-f^{\prime}(\frac{p_i}{\pi_i}))\,,
        \end{aligned}
    \end{equation}
where we introduce a mobility function $\theta(x,y)$ (also known as an activation function , see \cite{Gao2024Master}) as
    \begin{equation}\label{eq:phi-average}
    \theta (x, y) =\left\{
    \begin{aligned}
      &\frac{x - y}{f'(x) - f'(y)}\qquad &x\neq y;\\
      &\frac{1}{f''(x)}\qquad &x=y,
    \end{aligned}
    \right.
    \end{equation}
and use the abbreviation $\theta_{ij}(p)=\theta (\frac{p_i}{\pi_i},\frac{p_j}{\pi_j})$. 

Now we define the Onsager's response matrix $\mathbb{K}(p)$ as      
    \begin{equation}\label{def:Onsager_response}
      \mathbb{K}_{ij}(p)=-\omega_{ij}\theta_{ij}(p)\qquad\textrm{and}\qquad \mathbb{K}_{ii}(p)=-\sum_{j\neq i}\mathbb{K}_{ij}(p)\,.
    \end{equation}
$\mathbb{K}(p)$ is row-sum-zero and symmetric. One can verify that $\mathbb{K}(p)$ is positive semi-definite with $\mathrm{Ker}(\mathbb{K}(p)) = \mathrm{span}(\Id_n)$. We refer the readers to  \Cref{app:derivation_wg} for detailed discussion. In fact, $\mathbb{K}(p)$ is the graph Laplacian of a weighted undirected graph with a weight matrix $(\omega_{ij}\theta_{ij})$.
    
Since $\partial_{p_j} \D_{f}(p \| \pi)=f'(\frac{p_j}{\pi_j})$, \eqref{eq:Kol-in-theta} can be rewritten as $\displaystyle\frac{\mathrm{d}}{\mathrm{d}t}p_i = -\sum_{j=1}^n \mathbb{K}_{ij}(p) \partial_{p_j}\mathrm{D}(p \| \pi)$. Denote by
$\nabla_p \mathrm{D}(p\|\pi) := [\partial_{p_1}\mathrm{D} (p\|\pi), \dots, \partial_{p_n}\mathrm{D} (p\|\pi)]$, we have 
    \begin{equation}\label{eq:Wgf_precond}
    \frac{\mathrm{d}}{\mathrm{d}t}p =- \nabla_{p} \mathrm{D}(p \| \pi) \mathbb{K}(p),\qquad\textrm{or equivalently}\quad  \frac{\mathrm{d}}{\mathrm{d}t}p \mathbb{K}^{\dagger}(p)=-\nabla_{p} \mathrm{D}(p \| \pi),
    \end{equation}
where $\mathbb{K}^{\dagger}(p)$ is the Moore-Penrose inverse of $\mathbb{K}(p)$. Thus \eqref{eq:Wgf_precond} is employed with a preconditioning matrix $\mathbb{K}(p)$ on the gradient $\nabla_p \mathrm{D}(p\|\pi)$ from the view of optimization.

Consider a curve $p(t)$ on $\P(V)$ parametrized by a time variable $t$, we denote its tangent vector by $\dot{p}(t)$. Define the \textit{graphical Wasserstein} metric tensor $g_W(\cdot,\cdot)$ as 
    \begin{equation}\label{def:W_metric}
      g_{W}(\dot{p}(t), \dot{p}(t)) = \frac{1}{2} \sum_{i=1}^n \sum_{j\neq i} \omega_{ij} \theta_{ij}(p(t))(\psi_i(t) - \psi_j(t))^2 =\psi(t)\mathbb{K}(p(t))(\psi(t))^{\top}, 
    \end{equation}
where $\psi(t) = [\psi_1(t), \ldots, \psi_n(t)]$ denotes the momentum (cotangent) vector associated with the tangent vector $\dot{p}(t)$. We have the following graphical continuity equation:
\begin{equation} \label{eq:graph_continuity_eq}
  \dot{p}_i(t) + \sum_{j\neq i} \omega_{ij} \theta_{ij}(p(t)) (\psi_j (t) - \psi_i (t)) = 0,\qquad\textrm{that is,}\qquad \dot{p}-\psi \mathbb{K}(p)=0.
\end{equation}
Thus the inner product \eqref{def:W_metric} is expressed as 
    \begin{equation}\label{eq:W_metric}
    g_W(\dot{p}(t), \dot{p}(t)) = \dot{p}(t) \mathbb{K}^\dagger(p) \dot{p}(t)^\top.
    \end{equation}
One can interpret $g_W(\cdot, \cdot)$ as a Riemannian metric on $\mathbb{P}(V)$ with a metric tensor $\mathbb{K}^\dagger(p)$ 

It is worth noting that  $\mathrm{Ker}(\mathbb{K}(p)^\dagger) = \mathrm{Ker}(\mathbb{K}(p)) = \mathrm{span}(\Id_n)$. One can verify that $\mathbb{K}(p)$ is positive definite on $\Id_n^\perp$, and so is $\mathbb{K}(p)^\dagger$. Since the tangent space $T_p \mathbb{P}(V)$ at $p$ corresponds to $\Id_n^\perp$, $\mathbb{K}(p)^\dagger$ is positive definite on $T_p \mathbb{P}(V)$ for all $p \in \mathbb{P}(V)$. Consequently, one can treat $g_W(\cdot, \cdot)$ as a Riemannian metric on $\mathbb{P}(V)$. Such a metric also determines the geodesic and the corresponding distance function on $(\mathbb{P}(V), g_W)$. One can introduce the \textit{kinetic energy} on the graph from the Riemannian metric by 
    \begin{equation*}
       \frac12 g_W(\dot p(t), \dot p(t)) = \frac{1}{2} \psi(t) \mathbb{K}(p(t)) \psi(t)^\top = \frac{1}{4}\sum_{i=1}^n \sum_{j\neq i} \omega_{ij} \theta_{ij}(p(t)) (\psi_i(t)-\psi_j(t))^2.   \label{def:graph kinetic energy}
    \end{equation*}

Furthermore, analogous to the Benamou-Brenier theorem \citep{Benamou2000Computational} in continuous space, the graphical version of the Wasserstein metric between two probabilities $p_0$ and $p_1$  on the probability manifold $\P(V)$ is defined as follows: 
    \begin{equation}\label{def: Graph BB}
      W_2^2(p_0, p_1 ) = \inf_{p,\psi} \left\{  \frac12 \int_0^1 \sum_{i=1}^n \sum_{j\neq i}^n \omega_{ij}\theta_{ij}(p(t))(\psi_i(t) - \psi_j(t))^2 \mathrm{d}t    \right\},
    \end{equation}
where the infimum is taken over all sufficiently regular curves $p:[0,1]\mapsto \P(V)$ and $\psi:[0,1]\mapsto \R^{n}$ that satisfy the graphical continuity equation \eqref{eq:graph_continuity_eq} with boundary conditions $p(0)=p_0$, $p(1)=p_1$. And the minimizer $p(t)$ is the geodesic from $p_0$ to $p_1$ on the Riemannian manifold $(\P(V),g_W)$. Consequently, \eqref{eq:Wgf_precond} is the \textit{Wasserstein gradient flow} of $\mathrm{D}_f(\cdot\|\pi)$ on $(\mathbb{P}(V), g_W)$:
    \begin{equation*} 
      \frac{\mathrm{d} p}{\mathrm{d} t} = - \mathrm{grad} \mathrm{D}_f(p \| \pi) = -\nabla_p \mathrm{D}_f(p\|\pi) \mathbb{K}(p).
    \end{equation*}
The detailed derivation for the Wasserstein gradient $\mathrm{grad} \mathrm{D}_f(p \| \pi)$ can be founded in \Cref{app:derivation_wg}.

\subsection{Gradient Flow and Accelerated Flow in \texorpdfstring{$\mathbb{R}^n$}{Rn}}\label{sub:Nesterov}
In this subsection, we review some basic concepts and results from classical optimization in the Euclidean space $\R^d$. This perspective will help clarify how an accelerated framework can emerge naturally from gradient flows.

Let $U:\mathbb{R}^d \to \mathbb{R}$ be a $\mathcal{C}^1$ convex function. The classical gradient descent method for finding the global minimum of $U(x)$ consists of iteratively moving in the negative gradient direction at a small step size $\Delta t$
    \[x^{(k+1)} = x^{(k)} - \Delta t \nabla U(x^{(k)}).\] 
Convergence of the gradient descent algorithm can be guaranteed when $U(x)$ is $L$-smooth and $0<\Delta t< 1/L$. Then one has $U(x^{(k)})-U(x^*) = \mathcal{O}(k^{-1})$, where $U(x^*)$ is the minimum value. If $U(x)$ is further assumed to be $\lambda$-strongly convex, then with the choice of $\Delta t = 1/L$, one has linear convergence: $U(x^{(k)})-U(x^*) = \mathcal{O}\big((1-\lambda/L)^k\big)$.

The gradient flow can be viewed as the limit of the gradient descent as $\Delta t \to 0$. We consider instead the continuous trajectory $x(t)$ which follows the first-order ODE: 
\[\dot x(t) = -\nabla U(x(t))\] 
for $t>0$ with the initial condition $x(0) = x_0$. It is well known that the convergence rate of gradient flow is $\mathcal{O}(t^{-1})$ for convex functions. If $f$ is $\lambda$-strongly convex for some $\lambda >0$, then gradient flow converges at a rate of $\mathcal{O}(e^{-\lambda t})$.

The seminal paper \citet{Nesterov1983Accelerating} proposed the following iteration scheme to minimize a convex function $U(x)$ with some initial guess $x_0=y_0$:
    \begin{equation}\label{eq:nag_convex}
    \left\{
        \begin{aligned}
            x^{(k)} &= y^{(k-1)} - \Delta t \nabla U(y^{(k-1)}), \\
            y^{(k)} &= x^{(k)} + \frac{k-1}{k+2}(x^{(k)}-x^{(k-1)})\,. 
        \end{aligned}
    \right.
    \end{equation}
With the choice of $0<\Delta t \leqslant 1/L$, this scheme can be shown to converge at a much faster rate than the gradient descent. Specifically, $U(x^{(k)})-U(x^*) = \mathcal{O}(\frac{1}{\Delta t k^2})$. As a result, Nesterov's method is also referred as the \textit{accelerated gradient} method.

For $\lambda$-strongly convex and $L$-smooth functions, Nesterov's iterations are
    \begin{equation}\label{eq:nag_sc}
    \left\{
        \begin{aligned}
            x^{(k)} &= y^{(k-1)} - \Delta t \nabla U(y^{(k-1)}), \\
            y^{(k)} &= x^{(k)} + \frac{\sqrt{\kappa}-1}{\sqrt{\kappa}+1}(x^{(k)}-x^{(k-1)})\,. 
        \end{aligned}
    \right.
    \end{equation}
Here $\kappa = \frac{\lambda}{L}$ is the condition number. With the choice of $\Delta t = 1/L$, one can show that $U(x^{(k)})-U(x^*) = \mathcal{O}\big(e^{-k/\sqrt{\kappa}}\big)$. 

In contrast to the gradient flow, the continuous version of the Nesterov acceleration is a second order ODE, first proposed and studied by \citet{Su2016DE}:
\begin{equation}\label{eq:su_ODE} 
    \Ddot{x} + \gamma(t) \dot x + \nabla U(x) = 0 \,,
\end{equation}
with initial condition $x(0) = x_0$, $\dot x(0) = 0$. Here $\gamma(t)$ is a user-specified damping parameter that can be time-homogeneous or time-inhomogeneous. When $U(x)$ is convex, the original Nesterov's iterations \eqref{eq:nag_convex} corresponds to the choice $\gamma(t) = \frac{3}{t}$. When $U(x)$ is further assumed to be $\lambda$-strongly convex, \eqref{eq:nag_sc} corresponds to the choice $\gamma(t) = 2\sqrt{\lambda}$. 

An important observation by \citet{maddison2018hamiltonian} is that the second-order ODE \eqref{eq:su_ODE} can be formulated as 
    \begin{equation}\label{eq:NG-matrix}
        \begin{bmatrix}
            \dot x \\
            \dot v
        \end{bmatrix} + \begin{bmatrix}
            0 \\ \gamma(t) v 
        \end{bmatrix} - \begin{bmatrix}
            0 & I \\
            -I & 0
        \end{bmatrix} \begin{bmatrix}
            \partial_x \mathcal{H}(x,v) \\
            \partial_v \mathcal{H}(x,v)
        \end{bmatrix} = 0\,,
    \end{equation}
where $x$ is the state variable and $v$ is the momentum variable. $\mathcal{H}(x,v) \defeq \|v\|^2/2 + U(x)$ is the Hamiltonian function. This formulation lifts the second-order ODE to a system of ODEs, and shows that the accelerated gradient flow can be viewed as a damped Hamiltonian flow.

\citet{Zhang2018Towards} proposed the Riemannian version of Nesterov's method for
geodesically smooth and strongly convex problems. Borrowing ideas from \citet{Su2016DE}, \citet{Alimisis2020Continuous} generalized Nesterov's ODE to a Riemannian manifold $(\mathcal{M},g)$: 
\begin{equation*}
    \ddot X + \gamma(t) \dot X + \mathrm{grad}_{\mathcal{M}} U(X)= 0\,,
\end{equation*}
where $X\in \mathcal{M}$ and $\mathrm{grad}_{\mathcal{M}}U(X)$ is the Riemannian gradient of $U$ at $X$. In particular, when $U$ is geodesically convex, \citet{Alimisis2020Continuous} proved an $\mathcal{O}(t^{-2})$ convergence rate, matching the convergence rate in Euclidean space. The choice of the damping parameter $\gamma(t)$ also depends on the lower bound of the sectional curvature of the manifold $\mathcal{M}$. 

\section{Nesterov Acceleration on Probability Manifolds}\label{sec:acceleration}

{\color{blue}
Given a target probability $\pi$ and an irreducible $Q$-matrix that is reversible w.r.t $\pi$, they induce a graph $G=(V,E,\omega)$ with a weight matrix $(\omega_{ij})=(\pi_i Q_{ij})$. For the probability manifold $\P(V)$, selecting an eligible activation function $\theta_{ij}$ in the form of \eqref{eq:phi-average} determines the Onsager's response matrix $\mathbb{K}(p)$ via \eqref{def:Onsager_response}, which induces a Riemannian metric on $\P(V)$. Within this framework, we construct a damped Hamiltonian dynamics for a pair of state and momentum variables $(p(t),\psi(t))$, 
    \begin{equation}\label{eq:AMH-matrix}
        \begin{bmatrix}
            \dot{p}(t)\\
            \dot{\psi}(t)
        \end{bmatrix}=\begin{bmatrix}
            0\\
            -\gamma(t) \psi(t)
        \end{bmatrix}+\begin{bmatrix}
        0 & I\\
        -I & 0
        \end{bmatrix}
        \begin{bmatrix}
            \partial_p \mathcal{H}(p(t), \psi(t)) \\
            \partial_\psi \mathcal{H}(p(t), \psi(t)) 
        \end{bmatrix},
    \end{equation}
where $\gamma(t)$ is a user-specified positive damping parameter. \eqref{eq:AMH-matrix} is a variant of \eqref{eq:NG-matrix}, inspired by previous works on the Euclidean space \citep{Su2016DE, maddison2018hamiltonian} and on the probability manifold \citep{Wang2022Accelerated, Shi2025Accelerating}.

The Hamiltonian $\mathcal{H}(p,\psi)$ is defined as follows
    \begin{equation}\label{eq: Ham-func}
        \mathcal{H}(p, \psi)  = \frac12 \psi \mathbb{K}(p)\psi^\top + \mathcal{U}(p) 
        = \frac{1}{4}\sum_{i=1}^n \sum_{j\neq i} \omega_{ij} \theta_{ij}\left(p\right) (\psi_i - \psi_j)^2  + \mathcal{U}(p), 
    \end{equation}
where $\mathcal{U}:\mathbb{P}(V) \rightarrow \mathbb{R}$ is a function, whose unique critical point is the target probability $\pi$. The first term in \eqref{eq: Ham-func} can be interpreted as the graphical analogue of kinetic energy. The second term can be viewed as the potential energy in the classical Hamiltonian system. 

We expand the matrix form \eqref{eq:AMH-matrix} into the {\color{blue} template ODEs} by plugging \eqref{eq: Ham-func}, 
    \begin{subequations}\label{eq:NODE}
    \begin{empheq}[left=\empheqlbrace]{align}
       & \frac{\mathrm{d} p_i}{\mathrm{d} t} + \sum_{j\neq i} \omega_{ij} \theta_{ij}(p) (\psi_j-\psi_i) = 0, \label{eq:NODE_1}\\
         & \frac{\mathrm{d}\psi_i}{\mathrm{d} t} + \gamma(t) \psi_i + \frac{1}{2}\sum_{j\neq i} \omega_{ij} \frac{\partial \theta_{ij}(p)}{\partial p_i} (\psi_i - \psi_j)^2   + \frac{\partial \mathcal{U}(p)}{\partial p_i}=   0 ,\label{eq:NODE_2}
    \end{empheq}
    \end{subequations}
where \eqref{eq:NODE_1} can be written compactly as $\frac{\mathrm{d}}{\mathrm{d}t}p=\psi \mathbb{K}(p)$ while the second equation is in general nonlinear. Users may specify the weight matrix $\omega$, the activation function $\theta(p)$, the potential $\mathcal{U}(p)$ and the damping parameter $\gamma(t)$ in this system. We will discuss examples shown in \Cref{tab:AMH} in the following subsections. 

In general, \eqref{eq:NODE} has these basic properties:
    \begin{itemize}
        \item $\mathcal{H}(p(t),\psi(t))$ is nonincreasing. Since 
        \begin{equation}\label{eq:Decay-Hamiltonian}
        \begin{split}
            \frac{\mathrm{d}\mathcal{H}(t)}{\mathrm{d}t}&=\partial_p \mathcal{H}\cdot \frac{\mathrm{d}p(t)}{\mathrm{d}t} + \partial_{\psi}\mathcal{H}\cdot \frac{\mathrm{d}\psi(t)}{\mathrm{d}t}=\partial_p\mathcal{H}\cdot \partial_{\psi}\mathcal{H}+\partial_{\psi}\mathcal{H}\cdot (-\gamma(t) \psi(t)-\partial_p\mathcal{H})\\
            &=-\gamma(t) \partial_{\psi}\mathcal{H}\cdot \psi(t)=-\gamma(t) (\psi(t)\mathbb{K}(p))\cdot \psi(t) \leqslant 0.
        \end{split}
        \end{equation}
        \item $p(t)\in \P(V)$ is provided with $p(0)\in \P(V)$. Since
            \[\dot{p}(t)\Id_n^\top = \partial_{\psi}\mathcal{H} \Id_n^\top = \psi(t)\mathbb{K}(p(t)) \Id_n^\top=0\,.\]
        \item $p(t)$ driven by \eqref{eq:NODE} must converge to $\pi$. From \eqref{eq:NODE_1}, for any stationary point $(p^*,\psi^*)$ to \eqref{eq:NODE}, it is necessary that $\psi_i^*=\psi_j^*=c$ for some constant $c$. Consequently, $\frac{\partial \mathcal U(p^*)}{\partial p_i}=0$ from \eqref{eq:NODE_2}. Note that $\pi$ is the unique critical point to $\mathcal{U}(p)$.
    \end{itemize}
}

To construct a sampling algorithm (a ``jump process'' of particles on $V$), we rewrite \eqref{eq:NODE_1} in the form of the forward master equation
    \begin{equation}\label{eq:Kolmogorov-Ham}
    \frac{\mathrm{d}}{\mathrm{d}t}p=p\bar{Q}^r_{\psi},
    \end{equation}
and discretize it in time to obtain the jump process. The derivation can be found in \Cref{app:derivation}.  Here, the notation $\bar{Q}^r_{\psi}$ is to resemble the $Q$-matrix used in \Cref{sub:MH} and is row-sum-zero, given by 
    \begin{equation}\label{eq:AMH-jump}
    \begin{aligned}
    \begin{bmatrix}
        -\displaystyle\sum_{j\neq 1}\dfrac{\omega_{1j} \theta_{1j}(p) (\psi_1-\psi_j)_{-}}{p_1} & \dfrac{\omega_{12}\theta_{12}(p) (\psi_2-\psi_1)_{+}}{p_1} & \cdots & \dfrac{\omega_{1n}\theta_{1n}(p) (\psi_n-\psi_1)_{+}}{p_1}\\
        \dfrac{\omega_{21}\theta_{21}(p) (\psi_1-\psi_2)_{+}}{p_2} & -\displaystyle \sum_{j\neq 2}\dfrac{\omega_{2j}\theta_{2j}(p) (\psi_2-\psi_j)_{-}}{p_2} & \cdots & \dfrac{\omega_{2n}\theta_{2n}(p) (\psi_n-\psi_2)_+}{p_2}\\
        \vdots & \cdots & \ddots &\vdots\\
        \dfrac{\omega_{n1}\theta_{n1}(p) (\psi_1-\psi_n)_+}{p_n} & \dfrac{\omega_{n2}\theta_{n2}(p) (\psi_2-\psi_n)_+}{p_n} & \cdots & -\displaystyle\sum_{j\neq n}\dfrac{\omega_{nj}\theta_{nj}(p) (\psi_n-\psi_j)_{-}}{p_n}
    \end{bmatrix}
    \end{aligned}
    \end{equation}
where $(\psi_i-\psi_j)_+:=\max\{\psi_i-\psi_j, 0\}$ and $(\psi_i-\psi_j)_{-}:=-\min\{\psi_i-\psi_j, 0\}$. Notably, if $p_i(t)=0$, then the $i$-th row is undefined. In order to use $P=I_n+\bar{Q}^r_{\psi}\Delta t$ as the transition probability matrix in the jump process, it is crucial to ensure the \textit{strict positivity} of $p_i(t)$ for any $i$ and any $t>0$. 

{\color{blue}
\begin{remark}
The reformulation of our dynamics in terms of the forward master equation \eqref{eq:Kolmogorov-Ham} enables a formal comparison with the MH method reviewed in \Cref{sub:MH}. Using the same candidate kernel $q_{ij}$ as in the acceptance-rejection matrix $A_{ij}$ in \eqref{eq:Q-MH} for the MH method, the corresponding acceptance-rejection matrix of our jump process takes the form
    \begin{equation}\label{eq:new-accept}
(\bar{A}_{\psi})_{ij} := \max \left\{\frac{\omega_{ij}\theta_{ij}(p)(\psi_j-\psi_i)}{p_i q_{ij}},0\right\}=\max \left\{\frac{\pi_i \theta_{ij}(p)}{p_i}(\psi_j-\psi_i),0 \right\}A_{ij}.
    \end{equation}
It is also possible to derive an equivalent form of the forward master equation under a different $Q$-matrix, resulting in a different acceptance-rejection matrix, see one example for the \texttt{Chi-squared} method in \Cref{sub:chi}. As noted in \cite{Le2026Swarm}, which is motivated by considerations analogous to those underlying our dynamics in the setting of interacting particle systems (i.e., swarm dynamics), although \eqref{eq:AMH-jump} admits a Markovian interpretation, the resulting process is nonlinear in the sense that it interacts with its law. Consequently, it requires the simulation of multiple chains or, equivalently, the evolution of empirical distributions. By contrast, the Metropolis-Hastings algorithm presented in \Cref{alg:MH} can run in a single long chain, even though it may suffer from poor initialization, have long burn-in time or show pseudo-convergence \citep{Geyer2011Introduction}. This limitation has motivated the development of the multiple-chain variants of MH algorithms; see, e.g., \citet{Gelman1992Multiple,  Jasra2007Population, Craiu2009Neighbor, Jacob2011Parallel, Calderhead2014Parallel}.
\end{remark}
}

{\color{blue} In the following subsections, we present some examples of activation and potential functions.
In this paper, we focus mainly on the activation function $\theta_{ij}(p)=\dfrac{\frac{p_i}{\pi_i}-\frac{p_j}{\pi_j}}{f'(\frac{p_i}{\pi_i})-f'(\frac{p_j}{\pi_j})}$ motivated by Wasserstein gradient flows \eqref{eq:Wgf_precond}, and the corresponding potential $\mathcal{U}(p)$ either as $f$-divergences or relative Fisher information functions. 

For the former potential by $f$-divergences, from the perspectives outlined in \Cref{sub:Maas} and \Cref{sub:Nesterov}, the corresponding forward master equation \eqref{eq:Kolmogorov} can be interpreted as the gradient flow of the $f$-divergence in the probability manifold equipped with the graphical Wasserstein metric \eqref{def: Graph BB}. Consequently the damped Hamiltonian dynamics \eqref{eq:AMH-matrix} driven with $f$-divergence as its potential, can be regarded as the Nesterov accelerated flow in the probability manifold. We also note that the Hamiltonian flow with the relative Fisher information function in the Wasserstein space has been studied for Schr{\"o}dinger equations; 
see related motivations in \citep{PhysRev.150.1079,vonRenesse2012OTSchroedinger, CHOW20192440}. In this paper, we design a damped Hamiltonian flow on the simplex set, which uses the relative Fisher information functional to maintain the strict positivity of the probability function.

We present the explicit form of \eqref{eq:NODE} under the selected activation and potential functions in each subsequent subsections. The detailed derivations for \Cref{sub:chi} - \Cref{sub:Fisher} can be founded in \Cref{app:derivation}.}

\subsection{\texorpdfstring{$\chi^2$}{} Divergence and Uniform Activation}\label{sub:chi}

In this subsection, we select $\theta_{ij}=1$ and $\chi^2$ divergence $\mathcal{U}(p)=\frac{1}{2}\sum_{i=1}^n \frac{(p_i-\pi_i)^2}{\pi_i}$. It is clear that $\mathcal{U}(p)\geqslant 0$ and $\mathcal{U}(\pi)=0$. Now \eqref{eq:NODE} reads
    \begin{equation}
    \label{eq:chi-1}
        \left\{\begin{aligned}
       & \frac{\mathrm{d} p_i}{\mathrm{d} t} =  \psi_i \sum_{j\neq i} \omega_{ij}-\sum_{j\neq i} \psi_j \omega_{ji},\\
         & \frac{\mathrm{d}\psi_i}{\mathrm{d}t} + \gamma(t) \psi_i + \frac{p_i}{\pi_i}-1=   0 . 
    \end{aligned}\right.
    \end{equation}
Denote by $[p,\psi]=[p_1,\ldots,p_n,\psi_1,\ldots,\psi_n]$, we rewrite \eqref{eq:chi-1} into a matrix equation:
\[\frac{\mathrm{d}}{\mathrm{d}t}[p,\psi]=[p,\psi]L+[0_{1\times n},\Id_{1\times n}],\]
where the matrix $L$ is given by
    \begin{equation}\label{eq:chi-matrix}
    L\defeq\begin{bmatrix}
    0_{n\times n} & -\textrm{diag}(\frac{1}{\pi})\\
    K & -\gamma(t) I_n
    \end{bmatrix},
    \end{equation}
where $\textrm{diag}(\frac{1}{\pi})$ represents a diagonal matrix with diagonal entries $(\frac{1}{\pi_1},\ldots,\frac{1}{\pi_n})$. Choosing $\theta_{ij}=1$ results in a constant matrix $K$, which corresponds to a flat metric of $\P(V)$. We will refer to this method as the \texttt{Chi-squared} method. 

The vector form of \eqref{eq:chi-1} can be reformulated as the second-order ODE $\frac{\mathrm{d}^2 p}{\mathrm{d}t^2}+\gamma(t) \frac{\mathrm{d}p}{\mathrm{d}t} - pQ=0$, which is the Telegrapher's equation \citep{Archer2009Dynamical}. In coordinates, this equation reads
    \begin{equation}\label{eq:telegrapher}
    \frac{\mathrm{d}^2 p_i}{\mathrm{d}t^2}+\gamma(t) \frac{\mathrm{d} p_i}{\mathrm{d}t} - \sum_{j\neq i}(p_j Q_{ji}- p_i Q_{ij})=0.
    \end{equation}

{\color{blue}  
\begin{remark}
From the time discretizations of the equation on the state variables \eqref{eq:chi-1} and of 
of the above Telegrapher's equation \eqref{eq:telegrapher}, we obtain a non-Markovian process (a jump process for every two steps, i.e., from $p^{(k)}$ to $p^{(k+2)}$),
    \begin{multline*}
    p_i^{(k+2)}=p_i^{(k)}+ (\Delta t)^2 \sum_{j\neq i}\left[Q_{ji} + \frac{2-\gamma(t) \Delta t}{\Delta t}\frac{\pi_j Q_{ji} (\psi_i^{(k)}-\psi_j^{(k)})_+}{p_j^{(k)}}\right]p_j^{(k)}\\ 
    + (\Delta t)^2 \left[-Q_{ij} - \sum_{j\neq i}\frac{\pi_i Q_{ij}(\psi_i^{(k)}-\psi_j^{(k)})_-}{p_i^{(k)}}\right] p_i^{(k)},
    \end{multline*}
see \Cref{app:derivation} for details. Thus the acceptance-rejection matrix can be formally defined as
    \begin{equation}\label{eq:chi-accept}(\bar{A}_{\psi})_{ij}=\left(1+\frac{2-\gamma(t) \Delta t}{\Delta t}\frac{\pi_j}{p_j}(\psi_i-\psi_j)_+ \right)A_{ij},
    \end{equation}
where $A_{ij}=\frac{Q_{ij}}{q_{ij}}$ is the acceptance-rejection matrix from MH from \eqref{eq:Q-MH}. Notably, \eqref{eq:chi-accept} is not the special case of \eqref{eq:new-accept} after plugging $\theta_{ij}=1$. The design of an optimal step size $\Delta t$ for the corresponding new numerical scheme is left for future work.
\end{remark}
}

\subsection{KL Divergence and Logarithmic Mean}\label{sub:KL}

In this subsection, we choose the logarithmic mean $\theta_{ij} (p)=\dfrac{\frac{p_j}{\pi_j}-\frac{p_i}{\pi_i}}{\log\frac{\pi_i p_j}{\pi_j p_i}}$ in \eqref{eq:log-mean} and the \textit{Kullback-Leibler} (KL) divergence $\mathcal{U}(p)=\sum_{i=1}^n p_i\log\frac{p_i}{\pi_i}$. It is clear that $\mathcal{U}(p)\geqslant 0$ and $\mathcal{U}(p)=0$ if and only if $p=\pi$. Under these choices, \eqref{eq:NODE} becomes
    \begin{equation}\label{eq:KL}
        \left\{
        \begin{aligned}
            &\frac{\mathrm{d}p_i}{\mathrm{dt}}=\sum_{j\neq i}\pi_i Q_{ij} \frac{\frac{p_i}{\pi_i}-\frac{p_j}{\pi_j}}{\log\frac{\pi_j p_i}{\pi_i p_j}} (\psi_i-\psi_j),  \\
            &\frac{\mathrm{d}\psi_i}{\mathrm{d}t}+\gamma(t) \psi_i +\log\frac{p_i}{\pi_i}+\frac{1}{2}\sum_{j\neq i}Q_{ij}\frac{\log\left(\frac{\pi_j p_i}{\pi_i p_j}\right)-1+ \left(\frac{\pi_j p_i}{\pi_i p_j}\right)^{-1}}{\log^2\left(\frac{\pi_j p_i}{\pi_i p_j}\right)} (\psi_i-\psi_j)^2 =0.
        \end{aligned}
        \right.
    \end{equation}
We will refer this method as the \texttt{KL} method in subsequent sections. The choice of $\theta$ and $\mathcal{U}(p)$ in the \texttt{KL} method is natural, since it can be regarded as the Nesterov accelerated flow for the KL divergence in the probability manifold $(\P(V),g_W)$.  However, the positivity of density $p(t)$ might not be preserved under the functional $\mathcal{U}(p)=\sum p_i\log\frac{p_i}{\pi_i}$. One has to select a careful numerical scheme in order to do the jump process proposed in the beginning of this section.  Therefore we propose the relative Fisher information in the following subsections to tackle this issue. 

\subsection{The Relative Fisher Information and Logarithmic Mean}\label{sub:Fisher}

We still select the logarithmic mean while choose the potential energy $\mathcal{U}(p)=\I(p \| \pi)=\frac{1}{4}\sum_{i=1}^n \sum_{j\neq i} \omega_{ij}(\log \frac{p_i}{\pi_i}-\log\frac{p_j}{\pi_j})(\frac{p_i}{\pi_i}-\frac{p_j}{\pi_j})$ as the relative Fisher information $\I(p\| \pi)$. Notably, $\I(p \| \pi)\geqslant 0$, with equality if and only if $\frac{p_i}{\pi_i}=\frac{1}{Z}$ for all $i$. The following lemma shows that the relative Fisher information has a unique critical point.

\begin{lemma}\label[lemma]{lem:unique}
Given a strictly positive target probability $\pi$ and an irreducible transition-rate matrix $Q$, then $p>0$ is a critical point of $\mathrm{I}(p\|\pi)$ in $\mathbb{P}(V)$, i.e., $\nabla_p \mathrm{I}(p\|\pi)= 0$, if and only if $p=\pi$.
\end{lemma}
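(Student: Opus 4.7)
The plan is to first dispose of the direction $p = \pi \implies \nabla_p \mathrm{I}(p\|\pi) = 0$ by noting that at $p = \pi$ we have $v_i := p_i/\pi_i \equiv 1$, so every factor $\log v_i - \log v_j$ and $v_i - v_j$ vanishes and $\mathrm{I}(\pi\|\pi) = 0$ is a global minimum on $\mathbb{P}(V)$; the gradient formula below then gives the constant value $0$ at $\pi$.

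For the converse, I compute $\partial_{p_k}\mathrm{I}(p\|\pi)$ by the chain rule. The summand is a product of two antisymmetric quantities against the symmetric weight $\omega_{ij} = \omega_{ji}$ (from reversibility of $Q$ w.r.t.\ $\pi$), so the $i=k$ and $j=k$ contributions coincide and collapse into
$$
\partial_{p_k}\mathrm{I}(p\|\pi) \;=\; \tfrac{1}{2}\sum_{j \neq k} Q_{kj}\,\phi\!\left(\tfrac{v_j}{v_k}\right), \qquad \phi(r) := 1 - r - \log r.
$$
The function $\phi$ is strictly decreasing on $(0,\infty)$ with $\phi(1)=0$, hence $\phi(r) \ge 0$ iff $r \le 1$, with equality only at $r=1$. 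A critical point of $\mathrm{I}(\cdot\|\pi)$ on $\mathbb{P}(V)$ means $\partial_{p_k}\mathrm{I}$ is independent of $k$, equal to a common Lagrange multiplier $C$.

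The heart of the argument is a maximum-principle comparison. Pick $k^* \in V$ with $v_{k^*} = \max_k v_k$ and $k_* \in V$ with $v_{k_*} = \min_k v_k$. At $k^*$ every ratio $v_j/v_{k^*} \le 1$, so $\phi(v_j/v_{k^*}) \ge 0$; combined with $Q_{k^*j} \ge 0$ for $j \neq k^*$, this gives $C \ge 0$. Applying the same idea at $k_*$ yields $C \le 0$. Hence $C = 0$, and at $k^*$ the vanishing sum is a sum of nonnegative terms, so $Q_{k^*j}\,\phi(v_j/v_{k^*}) = 0$ for each $j \neq k^*$; equivalently, $Q_{k^*j} > 0$ implies $v_j = v_{k^*}$.

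It remains to propagate maximality throughout the graph. Set $S := \{i : v_i = \max_k v_k\}$; the previous step applies unchanged to any $k \in S$ (each such $k$ still attains the maximum), so $S$ is closed under the transitions of $Q$. Since $Q$ is irreducible, the only nonempty closed set is $V$, so $v$ is constant on $V$, and the normalization $\sum_i \pi_i v_i = 1 = \sum_i \pi_i$ forces $v \equiv 1$, i.e.\ $p = \pi$. The main technical obstacle I anticipate is the chain-rule reduction to the clean one-sum formula above; once that symmetry bookkeeping is in place, the maximum-principle plus irreducibility wraps up the argument transparently.
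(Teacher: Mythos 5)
Your proof is correct and follows essentially the same route as the paper's: an explicit chain-rule computation collapsing $\partial_{p_k}\mathrm{I}(p\|\pi)$ to $\tfrac{1}{2}\sum_{j\neq k}Q_{kj}\,\phi(v_j/v_k)$ with $v_i=p_i/\pi_i$ and $\phi(r)=1-r-\log r$, a sign/maximum-principle argument at the maximizer of $v_i$ showing each nonnegative summand must vanish, and propagation of the equality set through the graph via irreducibility, followed by normalization. The only (harmless) difference is that you treat criticality on the simplex via a Lagrange multiplier $C$ and use the minimizer of $v_i$ to force $C=0$, whereas the paper takes the lemma's stated definition $\nabla_p\mathrm{I}(p\|\pi)=0$ literally and therefore needs only the maximizer.
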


\eqref{eq:NODE} takes the form
    \begin{equation}\label{eq:log-Fisher}
        \left\{
        \begin{aligned}
            &\frac{\mathrm{d}p_i}{\mathrm{dt}}=\sum_{j\neq i}\pi_i Q_{ij}\dfrac{\frac{p_j}{\pi_j}-\frac{p_i}{\pi_i}}{\log \frac{\pi_i p_j}{\pi_j p_i}}(\psi_i-\psi_j),\\
            &\frac{\mathrm{d}\psi_i}{\mathrm{d}t}+\gamma(t) \psi_i+\frac{1}{2}\sum_{j\neq i}Q_{ij}[\log\left(\frac{\pi_j p_i}{\pi_i p_j}\right)+1-\left(\frac{\pi_j p_i}{\pi_i p_j}\right)^{-1}]\\
            &+\frac{1}{2}\sum_{j\neq i}Q_{ij} \frac{\log\left(\frac{\pi_j p_i}{\pi_i p_j}\right)-1+ \left(\frac{\pi_j p_i}{\pi_i p_j}\right)^{-1}}{\log^2\left(\frac{\pi_j p_i}{\pi_i p_j}\right)}(\psi_i-\psi_j)^2=0.
        \end{aligned}
        \right.
    \end{equation}
We will refer to this approach as the \texttt{log-Fisher} method. One advantage of this method is that it preserves the positivity of $p(t)$ along the damped Hamiltonian flow, whereas the \texttt{KL} method does not.

\subsection{The Relative Fisher Information and Constant Activation}\label{sub:Fisher+1}

Now we consider a constant mobility weight matrix $(\theta_{ij})$ that does not depend on $p(t)$, {\color{blue}whose entries are positive}. Selecting $\mathcal{U}(p)=\frac{1}{4}\sum_{i,j=1}^n  \omega_{ij}\theta_{ij}(\log \frac{p_i}{\pi_i}-\log \frac{p_j}{\pi_j})^2 $, \eqref{eq:NODE} reads
    \begin{equation*}
        \left\{
        \begin{aligned}
            & \frac{\mathrm{d} p_i}{\mathrm{d} t} = \sum_{j\neq i} \pi_i Q_{ij}\theta_{ij} (\psi_i-\psi_j), \\
            & \frac{\mathrm{d}\psi_i}{\mathrm{d} t} + \gamma(t) \psi_i + \frac{\pi_i}{p_i}\sum_{j\neq i} Q_{ij}\theta_{ij}\log \left(\frac{\pi_j p_i}{\pi_i p_j}\right)=0.
        \end{aligned}
        \right.
    \end{equation*}
We refer this as the \texttt{con-Fisher} method. This potential $\mathcal{U}(p)$ inherits the advantages discussed in the \Cref{sub:Fisher} such as the unique critical point and preserving strict positivity, while simplifying $\psi$-equation \eqref{eq:NODE_2}. Additionally, the Hessian of $\mathcal{U}(p)$ takes a simpler form compared to the one in \texttt{log-Fisher}. A detailed analysis will be presented in \Cref{sub:damping}.

\section{Properties of aMCMC}\label{sec:property}
In this section we discuss key properties of aMCMC and compare advantages and disadvantages across different methods.

\subsection{Normalizing Constant}\label{sub:normalizing}

To design a damped Hamiltonian system \eqref{eq:AMH-matrix} that does not require the normalizing constant $Z$, it is necessary to verify the following conditions:
    \begin{itemize}
        \item The Onsager's response matrix $\mathbb{K}(p)$ defined in \eqref{def:Onsager_response} is independent of $Z$.
        \item $\pi$ is the unique critical point to $\mathcal{U}(p)$ in $\P(V)$. 
        \item The damping parameter $\gamma(t)$ does not rely on the knowledge of $Z$.
    \end{itemize}

In \texttt{Chi-Squared} and \texttt{con-Fisher} methods, the associated Onsager response matrix $K_{ij}=-\omega_{ij}\theta_{ij}$ is constant, where parameters are commonly selected as $\omega_{ij}=\pi_i Q^{\textrm{MH}}_{ij}$ and $\theta_{ij}=1$. Though $Q^{\text{MH}}$ itself is independent of $Z$, the resulting $K$ depends on $Z$. In contrast, \texttt{KL} and \texttt{log-Fisher} methods take the Onsager response matrix in the form of $K_{ij}(p)=-\pi_i Q^{\textrm{MH}}_{ij}\dfrac{\frac{p_i}{\pi_i}-\frac{p_j}{\pi_j}}{\log \frac{\pi_j}{\pi_i}\frac{p_i}{p_j}}=-Q_{ij}^{\text{MH}}\dfrac{p_i-p_j\frac{\pi_i}{\pi_j}}{\log \frac{\pi_j}{\pi_i}\frac{p_i}{p_j}} $, which does not depend on $Z$.

\subsection{Positivity}\label{sub:positivity}

The $\chi^2$ divergence $\frac{1}{2}\sum_{i=1}^n \frac{(p_i-\pi_i)^2}{\pi_i}$ and the KL divergence $\sum_{i=1}^n p_i\log\frac{p_i}{\pi_i}$ remain well-defined even if some state $p_i=0$. However, neither divergence explicitly prevents $p_i(t)$ from approaching zero during evolution. In the following theorem, we propose a general condition on the function $\mathcal{U}(p)$ to ensure $p_i(t)$ away from zero. The potentials in \texttt{log-Fisher} and \texttt{con-Fisher} are two examples.

\begin{theorem}\label{thm:positivity}
Given a strictly positive target probability $\pi$ and an irreducible transition-rate matrix $Q$, let $(p(t),\psi(t))$ be the solution to the damped Hamiltonian dynamics \eqref{eq:NODE} on a graph $G=(V,E,\omega)$, with the potential function $\mathcal{U}(p)$ in the form of 
\[\mathcal{U}(p)=\frac{1}{4}\sum_{i,j=1}^n F_{\frac{p_i}{\pi_i}}(\frac{p_j}{\pi_j})\omega_{ij},\] where $F_{r_0}(r):\R_+\mapsto \R$ is a family of functions with a parameter $r_0\in \R_{+}$, satisfying that
    \begin{enumerate}[label=\arabic*)]
        \item $F_{r_0}(r)\geqslant 0$ with the equality if and only if $r=r_0$.
        \item $\displaystyle\lim_{r\to 0+} F_{r_0}(r)=\infty$.
        \item $F_{r_0}(r)$ is strictly decreasing on $(0,r_0]$.
        \item Fix $r\in (0,r_0]$, if $r_1>r_0$, then $F_{r_1}(r)> F_{r_0}(r)$.
    \end{enumerate}

Suppose $\mathcal{H}(p(0),\psi(0))$ is bounded and $p(0)>0$, then there exists a positive constant $\varepsilon>0$, such that for any $t$ and for any $i$, $p_i(t)>\varepsilon$.
\end{theorem}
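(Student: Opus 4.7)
The plan is to exploit the monotonicity of the Hamiltonian along the flow established in \eqref{eq:Decay-Hamiltonian}. Since the kinetic term $\frac{1}{2}\psi\mathbb{K}(p)\psi^\top$ is nonnegative, we obtain
\[\mathcal{U}(p(t)) \;\leq\; \mathcal{H}(p(t),\psi(t)) \;\leq\; \mathcal{H}(p(0),\psi(0)) \;=:\; C_0 \;<\; \infty\]
for every $t \geq 0$. Hence it suffices to establish the following barrier estimate: whenever $p^{(n)} \to p^*$ in $\mathbb{P}(V)$ with $p^*_k = 0$ for some $k$, we have $\mathcal{U}(p^{(n)}) \to \infty$. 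Combined with the compactness of $\mathbb{P}(V)$, this forces the entire orbit to remain in a compact subset of the interior, yielding a uniform $\varepsilon > 0$.

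To prove the barrier, write $r^{(n)}_i = p^{(n)}_i / \pi_i$ and note that $L = \{i : p^*_i > 0\}$ is nonempty because $\sum_i p^*_i = 1$, while $k \notin L$. Since $Q$ is irreducible, the graph $G = (V,E,\omega)$ with $\omega_{ij} = \pi_i Q_{ij}$ is connected, so there is a path from $k$ to some vertex in $L$ along positively weighted edges. Walking along this path, pick the last edge $(u,v) \in E$ with $p^*_u = 0 < p^*_v$, and set $r^*_v := p^*_v/\pi_v > 0$. For $n$ sufficiently large, $r^{(n)}_u < r^*_v/2$ and $r^{(n)}_v > r^*_v/2$, so Property~4 applied with $r_0 = r^*_v/2$ and $r_1 = r^{(n)}_v$ yields
\[F_{r^{(n)}_v}(r^{(n)}_u) \;\geq\; F_{r^*_v/2}(r^{(n)}_u),\]
while Property~2 at the fixed parameter $r^*_v/2$ sends $F_{r^*_v/2}(r^{(n)}_u) \to \infty$ as $r^{(n)}_u \to 0^+$. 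Since every other summand of $\mathcal{U}$ is nonnegative by Property~1, the single term $\tfrac{1}{4}\omega_{uv}F_{r^{(n)}_v}(r^{(n)}_u)$ already drives $\mathcal{U}(p^{(n)}) \to \infty$.

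The conclusion then follows by a standard compactness argument: if no such $\varepsilon$ existed, we could find a sequence $t_n$ and an index $i \in V$ (fixed after passing to a subsequence) with $p_i(t_n) \to 0$; extracting a further convergent subsequence $p(t_n) \to p^*$ in the compact set $\mathbb{P}(V)$ gives $p^*_i = 0$, so the barrier forces $\mathcal{U}(p(t_n)) \to \infty$, contradicting the uniform bound $\mathcal{U}(p(t_n)) \leq C_0$.

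I anticipate the main difficulty to be the coupling between the vanishing state $r^{(n)}_u$ and the potentially also vanishing reference $r^{(n)}_v$: Property~2 only controls $F_{r_0}(r)$ when $r_0$ is held fixed, so a naive limit is insufficient. The device above circumvents this by sandwiching $r^{(n)}_v$ below by the fixed positive constant $r^*_v/2$ and then invoking the monotonicity of $F_{\cdot}(r)$ in its first argument (Property~4) to convert a moving-parameter limit into a fixed-parameter one; once this is in place, the remainder of the argument is essentially lower semicontinuity of $\mathcal{U}$ together with compactness of $\mathbb{P}(V)$.
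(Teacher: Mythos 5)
Your proof is correct, but it takes a genuinely different route from the paper's. The paper's argument is constructive: starting from the node $i_1$ maximizing $r_i=p_i/\pi_i$ (which satisfies $r_{i_1}\geqslant \frac{1}{n\max\pi_i}$), it uses Property~3 to invert $F_{r_0}$ on $(0,r_0]$, defines $G_C(r)=F_r^{-1}(C)$, shows via Property~4 that $G_C$ is increasing, and then propagates the quantitative bound $r_{i_{k+1}}\geqslant G_{C/\bar\omega}(r_{i_k})$ along a path to every node, producing the explicit constant $\varepsilon=(\min\pi_i)\,G^{(n-1)}_{C/\bar\omega}(\frac{1}{n\max\pi_i})$. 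You instead prove a soft coercivity (barrier) statement --- $\mathcal{U}$ blows up along any sequence approaching the boundary of $\mathbb{P}(V)$ --- and conclude by compactness from the uniform bound $\mathcal{U}(p(t))\leqslant \mathcal{H}(p(0),\psi(0))$. Both arguments share the same two pillars (the Hamiltonian decay \eqref{eq:Decay-Hamiltonian} bounding $\mathcal{U}$, and graph connectivity to locate a positively weighted edge joining a vanishing node to a non-vanishing one), and your device of sandwiching the moving parameter $r_v^{(n)}$ below by the fixed constant $r_v^*/2$ and invoking Property~4 plays exactly the role that the monotonicity of $G_C$ plays in the paper. What each approach buys: yours is shorter, avoids Property~3 entirely (you only use Properties~1, 2, 4, so your argument is in fact slightly more general), but is non-quantitative; the paper's yields an explicit $\varepsilon$, which is relevant for the numerical discussion of restarts in \Cref{sub:restart}. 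One shared caveat, not a gap specific to you: both arguments presuppose the trajectory remains in the open simplex so that $\mathcal{U}(p(t))$ is defined; a fully rigorous version would run the estimate on the maximal interval of positivity and use it to rule out finite-time exit.
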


\begin{example}
In \Cref{sub:Fisher}, $\mathcal{U}(p)=\frac{1}{4}\sum_{i,j=1}^n (\log \frac{p_i}{\pi_i}-\log \frac{p_j}{\pi_j})(\frac{p_i}{\pi_i}-\frac{p_j}{\pi_j})\omega_{ij}$ corresponds to $F_{r_i}(r_j)=(\log r_i-\log r_j)(r_i-r_j)$ and $r_i=\frac{p_i}{\pi_i}$. 
\end{example}

\begin{example}
In \Cref{sub:Fisher+1}, $\mathcal{U}(p)=\frac{1}{4}\sum_{i,j=1}^n (\log \frac{p_i}{\pi_i}-\log \frac{p_j}{\pi_j})^2 \theta_{ij}\omega_{ij}$ chooses $F_{r_i}(r_j)=(\log r_i-\log r_j)^2 \theta_{ij}$ for some positive constant matrix $(\theta_{ij})$ and $r_i=\frac{p_i}{\pi_i}$.
\end{example}

One can verify conditions in \Cref{thm:positivity} are satisfied in both examples. The proof can be found in \Cref{app:proof}.

\subsection{Damping Parameter and Convergence Analysis}\label{sub:convergence}

With the Onsager's response matrix $\mathbb{K}(p)$ in \eqref{def:Onsager_response}, our proposed dynamics \eqref{eq:NODE} become
    \[
    \left\{
    \begin{aligned}
    \dot p(t) &= \psi(t) \mathbb{K}(p(t)) \,,  \\
    \dot \psi(t) &= -\frac{1}{2}\nabla_p \lan \psi(t), \psi(t) \mathbb{K}(p(t))  \ran - \gamma(t) \psi(t) - \nabla_p \mathcal{U}(p(t)) \,.
    \end{aligned}\right.
    \]
Recall the Hamiltonian is $\mathcal{H}(p,\psi) = \frac{1}{2} \psi \mathbb{K}(p) \psi^{\top} + \mathcal{U}(p).$

In this subsection, we establish the exponential convergence of the state variables for an explicitly chosen damping parameter by employing the Lyapunov analysis. {\color{blue}Previous work designs the damping parameter $\gamma(t)$ using the minimum Hessian eigenvalue in Euclidean settings \citep{Nesterov1983Accelerating,Nesterov2013Introductory} or sectional curvatures on Riemannian manifolds \citep{Alimisis2020Continuous}. When the Onsager response matrix $\mathbb{K}(p)$ depends on $p$, analyzing sectional curvatures on the probability manifold $\P(V)$ endowed with the graphical Wasserstein metric $g_W$ becomes essential \citep{li2025geometriccalculationsprobabilitymanifolds}.}

Our derivation relies on the assumptions that the Onsager's response matrix is constant (i.e, independent of $p(t)$), and the potential function $\mathcal{U}(p)$ is geodescially $\lambda$-strongly convex. Let the Onsager's response matrix $\mathbb{K}(p)=K$ for some positive semi-definite $K$. The corresponding Riemannian metric is flat. A function $\mathcal{U}$ on $\P(V)$ equipped with such a flat metric is geodesically $\lambda$-strongly convex for some constant $\lambda >0$, if
    \begin{equation}\label{eq:lambda-convex-flat}\mathcal{U}(\hat{p})\geqslant \mathcal{U}(p) + (\hat{p}-p)\nabla_p \mathcal{U}(p)^{\top} + \frac{\lambda}{2} \norm{\hat{p}-p}^2_{K^{\dagger}}.
    \end{equation}
Now, we define a Lyapunov function by
    \begin{equation}\label{eq:lyapunov_const_K}
    \mathcal{L}(t)=\frac{1}{2}e^{\sqrt{\lambda}t}\norm{\sqrt{\lambda}(p(t)-\pi)+ \psi K }^2_{K^{\dagger}} + e^{\sqrt{\lambda}t}(\mathcal{U}(p(t))-\mathcal{U}(\pi))\,.
    \end{equation}

\begin{theorem}\label{thm:convergence_rate}
Consider the probability manifold $\P(V)$ on a graph $G=(V,E,\omega)$ equipped with a flat metric \eqref{def:W_metric} or \eqref{eq:W_metric}, induced from a constant Onsager's response matrix \eqref{def:Onsager_response}. Given a function $\mathcal{U}$ that is geodesically $\lambda$-strongly convex for $\lambda >0$, select a time-homogeneous damping parameter $\gamma(t)=2\sqrt{\lambda}$, and let $(p(t),\psi(t))$ be the solution to the proposed damped Hamiltonian dynamics \eqref{eq:AMH-matrix}. Then the Lyapunov function \eqref{eq:lyapunov_const_K} is non-increasing. Furthermore, $\mathcal{U}(p(t))-\mathcal{U}(\pi)\leqslant \mathcal{O}(e^{-\sqrt{\lambda}t})$.
\end{theorem}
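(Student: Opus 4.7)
My plan is a Lyapunov analysis in the spirit of \citet{Su2016DE} and \citet{Wibisono2018Sampling}, adapted to the flat graphical Wasserstein geometry induced by the constant Onsager response $K$. The strategy splits into two steps. First I would show that $\mathcal{L}(t)$ in \eqref{eq:lyapunov_const_K} is non-increasing along the solution curve. Granting monotonicity, the stated rate is immediate: since $K^{\dagger}$ is positive semi-definite on $T_p\P(V)$ and $\mathcal{U}(p(t))-\mathcal{U}(\pi)\geqslant 0$ by geodesic $\lambda$-strong convexity evaluated at the minimizer $\pi$, both summands of $\mathcal{L}(t)$ are nonnegative, hence $\mathcal{U}(p(t))-\mathcal{U}(\pi)\leqslant e^{-\sqrt{\lambda}t}\,\mathcal{L}(t)\leqslant e^{-\sqrt{\lambda}t}\mathcal{L}(0) = \mathcal{O}(e^{-\sqrt{\lambda}t})$.

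For the monotonicity, I exploit that with $\mathbb{K}(p)\equiv K$ the nonlinear self-interaction $\frac{1}{2}\nabla_p\lan\psi,\psi K\ran$ in \eqref{eq:NODE_2} vanishes, so the dynamics collapse to $\dot p=\psi K$ and $\dot\psi = -2\sqrt{\lambda}\,\psi - \nabla_p\mathcal{U}(p)$. I introduce $v(t)\defeq \sqrt{\lambda}(p(t)-\pi)+\psi(t)K$, for which substitution gives the clean identity
\begin{equation*}
    \dot v = -\sqrt{\lambda}\,\psi K - \nabla_p\mathcal{U}(p)\,K.
\end{equation*}
Using that $v,\dot v\in \Id_n^{\perp}$ (because each of $p-\pi$, $\psi K$, and $\nabla_p\mathcal{U}\cdot K$ has zero coordinate sum by the row-sum-zero property of $K$), and that $K^{\dagger}K$ restricts to the identity on $\Id_n^{\perp}$, expansion yields
\begin{equation*}
    \|v\|^2_{K^{\dagger}} = \lambda\|p-\pi\|^2_{K^{\dagger}} + 2\sqrt{\lambda}\,(p-\pi)\psi^{\top} + \psi K\psi^{\top},
\end{equation*}
together with $\tfrac{d}{dt}(vK^{\dagger}v^{\top}) = 2vK^{\dagger}\dot v^{\top} = -2\sqrt{\lambda}\,v\psi^{\top}-2v\nabla_p\mathcal{U}^{\top}$. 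Combining this with the contribution $\sqrt{\lambda}e^{\sqrt{\lambda}t}(\mathcal{U}(p)-\mathcal{U}(\pi)) + e^{\sqrt{\lambda}t}\nabla_p\mathcal{U}\cdot(\psi K)^{\top}$ from differentiating the potential part, the mixed term $\psi K\nabla_p\mathcal{U}^{\top}$ cancels between the two summands (by symmetry of $K$), the $(p-\pi)\psi^{\top}$ contributions cancel exactly, and the surviving expression reduces to
\begin{equation*}
    e^{-\sqrt{\lambda}t}\tfrac{d\mathcal{L}}{dt} = \tfrac{\lambda^{3/2}}{2}\|p-\pi\|^2_{K^{\dagger}} - \tfrac{\sqrt{\lambda}}{2}\psi K\psi^{\top} + \sqrt{\lambda}\bigl[(\mathcal{U}(p)-\mathcal{U}(\pi)) - (p-\pi)\nabla_p\mathcal{U}^{\top}\bigr].
\end{equation*}

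The final step applies \eqref{eq:lambda-convex-flat} with $\hat p=\pi$ in the rearranged form $(\mathcal{U}(p)-\mathcal{U}(\pi))-(p-\pi)\nabla_p\mathcal{U}^{\top}\leqslant -\tfrac{\lambda}{2}\|p-\pi\|^2_{K^{\dagger}}$, which exactly annihilates the $\tfrac{\lambda^{3/2}}{2}\|p-\pi\|^2_{K^{\dagger}}$ remainder and leaves $\tfrac{d\mathcal{L}}{dt}\leqslant -\tfrac{\sqrt{\lambda}}{2}e^{\sqrt{\lambda}t}\,\psi K\psi^{\top}\leqslant 0$, as $K$ is positive semi-definite. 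I expect the main obstacle to be the pseudoinverse bookkeeping: every vector multiplied by $K^{\dagger}$ must be verified to lie in $\Id_n^{\perp}$ so that $K^{\dagger}K$ can be replaced by the identity, and one must check that although $\nabla_p\mathcal{U}$ is \emph{a priori} an unconstrained row vector in $\mathbb{R}^n$, it enters the computation only through inner products with tangent vectors (via $\dot p=\psi K$ and via $(p-\pi)\nabla_p\mathcal{U}^{\top}$), so any component of $\nabla_p\mathcal{U}$ along $\Id_n$ is immaterial and the constrained and unconstrained gradients agree wherever they are used.
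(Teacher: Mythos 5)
Your proposal is correct and follows essentially the same Lyapunov computation as the paper: differentiating $\mathcal{L}(t)$, cancelling the cross terms $\lambda\psi(p-\pi)^\top$ and $\psi K\nabla_p\mathcal{U}^\top$, and absorbing the remaining $\tfrac{\lambda^{3/2}}{2}\|p-\pi\|_{K^\dagger}^2$ term via the strong-convexity inequality \eqref{eq:lambda-convex-flat} evaluated at $\hat p=\pi$, leaving only $-\tfrac{\sqrt{\lambda}}{2}\psi K\psi^\top\leqslant 0$. Your added bookkeeping (introducing $v$, and verifying that every vector hit by $K^{\dagger}K$ lies in $\Id_n^{\perp}$ so the projection acts as the identity) makes explicit a step the paper's derivation leaves implicit, but it is the same argument.
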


\begin{proof}
\begin{align}\label{eq:decay_lyapunov_const_K}
   e^{-\sqrt{\lambda}t} \frac{\mrd \mathcal{L}}{\mrd t} &= \frac{1}{2}\sqrt{\lambda}\left[\lambda \norm{p-\pi}^2_{K^{\dagger}} + \norm{\psi}^2_{K} + 2\sqrt{\lambda}\psi (p-\pi)^\top \right] + \sqrt{\lambda}(\mathcal{U}(p)-\mathcal{U}(\pi)) \nonumber \\
   &\qquad + \nabla_p \psi K \mathcal{U}(p)^\top- (\sqrt{\lambda} \psi K + \nabla_p \mathcal{U}(p) K)K^{\dagger}(\sqrt{\lambda}(p-\pi)+\psi K)\nonumber \\
   &= \frac{1}{2}\lambda^{\frac{3}{2}} \norm{p-\pi}^2_{K^{\dagger}} + \frac{\sqrt{\lambda}}{2}\norm{\psi}^2_{K} + \lambda \psi (p-\pi)^\top +  \sqrt{\lambda}(\mathcal{U}(p)-\mathcal{U}(\pi))+ \psi K \nabla_p \mathcal{U}(p)^\top\nonumber \\
   &\qquad -\lambda \psi (p-\pi)^\top - \sqrt{\lambda}\norm{\psi}^2_{K}-\sqrt{\lambda}(p-\pi) \nabla_p \mathcal{U}(p)^\top - \psi K \nabla_p \mathcal{U}(p)^\top\nonumber \\
   &= -\frac{1}{2}\sqrt{\lambda}\norm{\psi}^2_{K} + \sqrt{\lambda}\left[\mathcal{U}(p)-\mathcal{U}(\pi)+(\pi-p)\nabla_p \mathcal{U}(p)^\top + \frac{\lambda}{2} \norm{\pi-p}_{K^{\dagger}}^2\right]\,.
\end{align}
Observe that the first term in \eqref{eq:decay_lyapunov_const_K} is non-positive. The second term in \eqref{eq:decay_lyapunov_const_K} is also non-positive by the geodesically $\lambda$-strongly convex assumption on $\mathcal{U}$. Thus, we conclude that $
\frac{\mrd \mathcal{L}}{\mrd t} \leqslant 0\,$, 
which implies 
    \begin{equation*}
        \mathcal{U}(p(t))-\mathcal{U}(\pi) \leqslant e^{-\sqrt{\lambda}t} \mathcal{L}(t) \leqslant e^{-\sqrt{\lambda}t} \mathcal{L}(0) = \mathcal{O}(e^{-\sqrt{\lambda}t}) \,.
    \end{equation*}

\end{proof}

\subsection{Geodesic Convexity, Hessian and Eigenvalues}\label{sub:damping}

{\color{blue}
According to \Cref{thm:convergence_rate}, under the flat metric $K$, the damping parameter $\gamma(t)$ can be designed based on the constant $\lambda>0$ that characterizes the convexity of $\mathcal U$. \Cref{app:Hessian} reviews the relation between the Hessian and geodesic convexity of a potential $\mathcal{U}$, following \cite{Mielke2013Geodesic}. It suffices to verify condition \eqref{eq:lambda-convex-flat} or to establish that $\mathrm{D}^2\mathcal{U}\succeq\lambda K^{\dagger}$ for some constant $\lambda>0$. In this section, we study how to find the constant $\lambda$.}

\subsubsection{\texttt{Chi-squared} method}

To check if $\mathcal{U}(p)=\frac{1}{2}\sum_{i=1}^n \frac{(p_i-\pi_i)^2}{\pi_i}$ is geodesically $\lambda$-strongly convex, we need to find a positive constant $\lambda>0$ such that
    \[
    \begin{aligned}
    &\mathcal{U}(\hat{p})-\mathcal{U}(p)-(\hat{p}-p) (\nabla_p \mathcal{U}(p))^{\top}=\sum_{i=1}^n (\hat{p}_i-p_i)\frac{\hat{p}_i+p_i -2\pi_i}{2\pi_i}-\sum_{i=1}^n (\hat{p}_i -p_i)\frac{p_i-\pi_i}{\pi_i}\\
    = & \sum_{i=1}^n (\hat{p}_i - p_i)\frac{\hat{p}_i-p_i}{2\pi_i}=(\hat{p}-p)\textrm{diag}(\frac{1}{2\pi})(\hat{p}-p)^{\top}\geqslant \frac{\lambda}{2}(\hat{p}-p)K^{\dagger}(\hat{p}-p)^{\top},
    \end{aligned}
    \]
where $K=-\omega=-\mathrm{diag}(\pi)Q$. A sufficient condition is that $0<\lambda\leqslant \frac{\min \frac{1}{\pi_i}}{\lambda_{\textrm{max}}((-\omega)^{\dagger})}$. Given that $\pi$ is assumed to be strictly positive and $\omega$ is the weight matrix associated to a reversible and irreducible Markov chain, we know $\lambda$ exists.

Given a transition rate matrix 
$Q$, for instance $Q^{\textrm{MH}}$ in MH, we compare the convergence of the classical MCMC method and its accelerated counterpart driven by the $\chi^2$ divergence, using spectral analysis. In particular, when a constant damping parameter is applied, the matrix $L$ defined in \eqref{eq:chi-matrix} is a constant matrix, whose eigenvalues can be computed directly. In Lemma \ref{lem: eigen} we derive an explicit relationship between the spectrum of 
$L$ and
$Q$, which enables a quantitative comparison of convergence rates in terms of the spectral gap in \Cref{thm:chi}. Background material on the spectral properties of general irreducible and reversible transition rate matrices is provided in \Cref{app:Q-matrix}.

\begin{lemma}\label{lem: eigen}
Let $\gamma(t)=d$ for some $d >0 $.
    \begin{itemize}
        \item If $\mu$ is a real eigenvalue of $L$, then $\alpha = \mu (d+ \mu)$ is a real eigenvalue of $Q$. 
        \item If $\alpha$ is a real eigenvalue of $Q$, then there exist (possibly complex) eigenvalues $\mu$ of $L$ such that $\mu(d+\mu)=\alpha$.
    \end{itemize} 
\end{lemma}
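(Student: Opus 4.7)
The plan is to reduce the block eigenvalue problem for $L$ to a scalar quadratic relation by eliminating one of the block components, and then to recognize the resulting $n\times n$ matrix as similar to $Q$. Since $L$ acts on the right of the row vector $[p,\psi]$, I will work with left eigenvectors throughout.

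First I would write a putative left eigenvector of $L$ as a concatenated row vector $[u,v]\in\mathbb{R}^{1\times 2n}$ and unpack the equation $[u,v]L=\mu[u,v]$ using the block form
\[
L=\begin{bmatrix} 0_{n\times n} & -\mathrm{diag}(1/\pi)\\ K & -dI_n\end{bmatrix}.
\]
Multiplying out block by block, this yields the coupled system
\[
vK=\mu u,\qquad -u\,\mathrm{diag}(1/\pi)-dv=\mu v.
\]
Assuming $\mu\neq 0$, I solve the first equation for $u=\mu^{-1}vK$ and substitute into the second to eliminate $u$, obtaining
\[
v\bigl(-K\,\mathrm{diag}(1/\pi)\bigr)=\mu(d+\mu)\,v.
\]
In other words, $v$ is a left eigenvector of $M:=-K\,\mathrm{diag}(1/\pi)$ with eigenvalue $\mu(d+\mu)$.

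The next step is to identify $M$ with something spectrally equivalent to $Q$. From the \texttt{Chi-squared} setup of \Cref{sub:chi} we have $\theta_{ij}\equiv 1$, so $K_{ij}=-\omega_{ij}=-\pi_iQ_{ij}$ off-diagonally, and since $K$ is row-sum-zero we in fact have $K=-\mathrm{diag}(\pi)Q$. Hence
\[
M=\mathrm{diag}(\pi)\,Q\,\mathrm{diag}(\pi)^{-1},
\]
which is a similarity transform of $Q$. In particular $M$ and $Q$ share their spectrum, and because $Q$ is reversible (and thus has real eigenvalues by the discussion deferred to \Cref{append:Q-matrix}), every eigenvalue $\alpha$ of $M$ is real. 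The first bullet follows immediately: a real $\mu\neq 0$ of $L$ produces the real number $\alpha=\mu(d+\mu)$ which is an eigenvalue of $M$, hence of $Q$. For the converse bullet, given a real eigenvalue $\alpha$ of $Q$, I take a left eigenvector $v$ of $M$ at $\alpha$, solve the quadratic $\mu^{2}+d\mu-\alpha=0$ for its (possibly complex) roots $\mu=\tfrac{-d\pm\sqrt{d^{2}+4\alpha}}{2}$, and then define $u=\mu^{-1}vK$; a direct check confirms that the resulting $[u,v]$ is a left eigenvector of $L$ at $\mu$.

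The only subtlety is the case $\mu=0$, which I would handle separately before assuming $\mu\neq 0$: the system becomes $vK=0$ and $u=-d\,v\,\mathrm{diag}(\pi)$, so $\mu=0$ is an eigenvalue of $L$ exactly when $v\in\mathrm{Ker}(K\text{, acting on the left})$, which by the same similarity corresponds to $\alpha=0$ for $Q$, and the identity $\alpha=\mu(d+\mu)$ still holds trivially. I do not expect any serious obstacle here; the main bookkeeping issue is simply to keep the left versus right eigenvector conventions and the column-sum-zero versus row-sum-zero identifications consistent so that $K=-\mathrm{diag}(\pi)Q$ and the similarity $M=\mathrm{diag}(\pi)Q\,\mathrm{diag}(\pi)^{-1}$ are used correctly.
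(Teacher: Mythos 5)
Your argument is correct, but it follows a genuinely different route from the paper. The paper proves the lemma in one line with a determinant identity: applying $\det\begin{pmatrix} A & B\\ C& D\end{pmatrix}=\det(AD-BC)$ (valid here because $D-\mu I_n=-(d+\mu)I_n$ commutes with $C$) to $L-\mu I_{2n}$ yields the characteristic-polynomial relation $\det(L-\mu I_{2n})=\det\bigl(\mu(d+\mu)I_n-Q\bigr)$, from which both bullets are read off immediately; this also carries multiplicity information for free. You instead eliminate the first block component of a left eigenvector $[u,v]$ of $L$ (a Schur-complement-style reduction), land on the matrix $M=-K\,\mathrm{diag}(1/\pi)=\mathrm{diag}(\pi)\,Q\,\mathrm{diag}(\pi)^{-1}$, and use the similarity $M\sim Q$ to transfer the spectrum. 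Your computations check out: the block identities $vK=\mu u$ and $-u\,\mathrm{diag}(1/\pi)-dv=\mu v$ are correct for the row-vector convention, the identification $K=-\mathrm{diag}(\pi)Q$ is exactly what the paper uses in the \texttt{Chi-squared} setting, and you correctly isolate and resolve the $\mu=0$ case that your division by $\mu$ would otherwise miss (the paper's determinant route needs no such case split). What your approach buys is constructive content -- an explicit description of the eigenvectors of $L$ in terms of those of $Q$, namely $[\mu^{-1}vK,\,v]$ -- which the determinant argument does not provide; what it costs is the extra bookkeeping (the $\mu=0$ branch, and the implicit observation that $v\neq 0$ whenever $[u,v]\neq 0$, which follows from $vK=\mu u$ together with the second block equation). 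The appeal to reversibility for the realness of the spectrum of $Q$ is harmless but not needed for either bullet, since in the first bullet $\alpha=\mu(d+\mu)$ is real simply because $\mu$ and $d$ are.
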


\begin{proof}
Given square matrices $A,B,C,D$, {\color{blue} if $C,D$ are commutative, then we have the simple determinant formula
    \[\textrm{det}\begin{pmatrix}
        A & B\\
        C & D
    \end{pmatrix}=\textrm{det}(AD-BC).\]
}
Take $A=0_{n\times n}$, $B=-\textrm{diag}(\frac{1}{\pi})$, $C=-\textrm{diag}(\pi)Q$, and $D=-d I_n$ from $L$ matrix in \eqref{eq:chi-matrix}, then 
    \begin{equation}
    \label{eq:eig-relation}
    \textrm{det}(L-\mu I_{2n})=\textrm{det}((A-\mu I_{n})(D-\mu I_{n})-BC)=\textrm{det}(\mu(d +\mu) I_{n}-Q).
    \end{equation}
Obviously, if $\mu$ is a real eigenvalue of $L$, then  $\alpha=\mu(d +\mu)$ is a real eigenvalue of $Q$. On the other hand, if $\alpha$ is a real eigenvalue of $Q$, there exists (possibly complex) $\mu$ such that $\mu(d+\mu)=\alpha$, and
    \[0=\textrm{det}(\alpha I_{n} -Q)=\textrm{det}(\mu(d+\mu)I_{n}-Q)=\textrm{det}(L-\mu I_{2n }).\]
\end{proof}
In particular, $0$ is an eigenvalue of both $Q$ and $L$.

\begin{theorem}\label{thm:chi}
Let $\alpha_*$ be the largest negative eigenvalue (i.e., the spectral gap) of $Q$,
\[\alpha_*=\max\left\{\alpha< 0\mid (Q-\alpha I)\textrm{~is not injective~}\right\}.\]
If $\abs{\alpha_*}<1$, then there exists damping parameter $\gamma(t)=d\in [2\sqrt{\abs{\alpha_*}},\abs{\alpha_*}+1)$, such that the largest negative eigenvalue $\mu_*$ of $L$ satisfies $\mu_*<\alpha_*$.
\end{theorem}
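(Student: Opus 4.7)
The plan is to reduce the spectral comparison to a scalar inequality using \Cref{lem: eigen}, then solve it in closed form. First, invoking the spectral theory of reversible irreducible rate matrices (\Cref{append:Q-matrix}), $Q$ has only real eigenvalues, with $0$ simple and all others strictly negative. By \Cref{lem: eigen}, every real eigenvalue $\mu$ of $L$ arises as a root of $\mu^2 + d\mu - \alpha = 0$ for some eigenvalue $\alpha \leqslant 0$ of $Q$, so
\[
\mu = \frac{-d \pm \sqrt{d^2 + 4\alpha}}{2}.
\]
For $\alpha = 0$ this gives $\{0,-d\}$; for $\alpha < 0$ with $d \geqslant 2\sqrt{|\alpha|}$ both roots are real and negative (product $-\alpha > 0$, sum $-d < 0$); otherwise the roots form a complex conjugate pair with real part $-d/2$.

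Second, I would identify the largest negative real eigenvalue $\mu_*$ as coming from $\alpha = \alpha_*$. The upper real root $\mu_+(\alpha) = \frac{-d + \sqrt{d^2 + 4\alpha}}{2}$ is strictly increasing in $\alpha$ on $[-d^2/4, 0]$, so among real-producing eigenvalues of $Q$ the maximum upper root is achieved at $\alpha_*$ itself (which lies in the admissible range provided $d \geqslant 2\sqrt{|\alpha_*|}$). This gives $\mu_+^* = \frac{-d + \sqrt{d^2 + 4\alpha_*}}{2}$, and since $\mu_+^* \geqslant -d/2 > -d$, it also dominates the root $\mu = -d$ coming from $\alpha = 0$. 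Any complex roots arising from $\alpha < -d^2/4$ have real part $-d/2$, which under the hypothesis satisfies $-d/2 \leqslant -\sqrt{|\alpha_*|} < \alpha_* < \mu_+^*$, so they are dominated under either interpretation of ``largest negative eigenvalue''. Hence $\mu_* = \mu_+^*$.

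Third, I would solve $\mu_+^* < \alpha_*$ algebraically. The inequality reads $\sqrt{d^2 + 4\alpha_*} < 2\alpha_* + d$. When $|\alpha_*| < 1$, the requirement $d \geqslant 2\sqrt{|\alpha_*|} > 2|\alpha_*|$ (using $x < \sqrt{x}$ for $x = |\alpha_*| \in (0,1)$) makes the right-hand side positive, so squaring is valid. The squared inequality simplifies to $\alpha_* < \alpha_*^2 + \alpha_* d$, which upon dividing by $\alpha_* < 0$ yields $d + \alpha_* < 1$, i.e., $d < 1 + |\alpha_*|$. Combined with the lower bound, the interval $[2\sqrt{|\alpha_*|},\, 1+|\alpha_*|)$ is non-empty precisely because $(1 - \sqrt{|\alpha_*|})^2 > 0$ whenever $|\alpha_*| \neq 1$, which is automatic under $|\alpha_*| < 1$. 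Any $d$ in this interval supplies the desired damping.

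The step I expect to be most delicate is the bookkeeping in the second paragraph: one must confirm that no other eigenvalue of $L$---neither the real roots coming from eigenvalues $\alpha < \alpha_*$ of $Q$, nor the spurious root $-d$ from $\alpha = 0$, nor the complex pairs from deeper $\alpha$---exceeds $\mu_+^*$. Each of these is handled by a separate comparison (monotonicity of $\mu_+$, the bound $\mu_+^* > -d/2$, and the ordering $-d/2 < \alpha_*$), and the hypothesis $|\alpha_*| < 1$ is used precisely to make this last comparison tight enough to both keep the admissible interval nonempty and place the complex real parts below $\alpha_*$.
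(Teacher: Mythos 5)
Your proof is correct and follows essentially the same route as the paper: reduce to the quadratic $\mu(d+\mu)=\alpha$ via \Cref{lem: eigen}, identify $\mu_*$ with the upper root $\mu_+(\alpha_*)$, and show that $d<1+\abs{\alpha_*}$ forces $\mu_*<\alpha_*$ (the paper handles this last step by monotonicity of $\mu_*$ in $d$ rather than by squaring, a cosmetic difference). One slip worth fixing: in your chain $-d/2\leqslant-\sqrt{\abs{\alpha_*}}<\alpha_*<\mu_+^*$ the final link is false---the theorem's conclusion is precisely $\mu_+^*<\alpha_*$---but nothing is lost, since dominating the complex roots only requires $-d/2\leqslant\mu_+^*$, which you establish separately.
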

\begin{proof}
Let $\alpha$ be an eigenvalue of $Q$, then from \eqref{eq:eig-relation} there exists $\mu_{1}(\alpha)=\frac{-d+ \sqrt{d^2+4\alpha}}{2}, \mu_{2}(\alpha)=\frac{-d-\sqrt{d^2+4\alpha}}{2}$ which are (possibly complex) eigenvalues of $L$. In particular, $\mu_1(0)=0$ and $\mu_2(0)=-d$. Let $\alpha_{n-1}\leqslant \alpha_{n-2}\leqslant\cdots\leqslant \alpha_1<0=\alpha_0$ denote the ordered eigenvalues of $Q$. Then $\alpha_*=\alpha_1$. Fix $\alpha \in [\alpha_{n-1}, \alpha_1]$. If $d\geqslant 2\sqrt{\abs{\alpha}}$, then $\mu_2(\alpha)\leqslant -\frac{d}{2}\leqslant \mu_1(\alpha)<0$. If $d< 2\sqrt{\abs{\alpha}}$, $\textrm{Re}(\mu_1(\alpha))=\textrm{Re}(\mu_2(\alpha))=\frac{-d}{2}$. Since $\abs{\alpha_*}<1$, we can pick $d\in [2\sqrt{\abs{\alpha_*}},\abs{\alpha_*}+1)$. We consider two cases: 
\begin{itemize}
\item If there exists some $k$ such that $d\in [2\sqrt{\abs{\alpha_k}}, 2\sqrt{\abs{\alpha_{k+1}}})$, then \[\mu_1(\alpha_1)=\max \left\{\mu_1(\alpha_1),\mu_2(\alpha_1),\cdots,\mu_1(\alpha_k),\mu_2(\alpha_k)\right\}\geqslant -\frac{d}{2},\]
while $\textrm{Re}(\mu_1(\alpha_j))=\textrm{Re}(\mu_2(\alpha_j))=-\frac{d}{2}$ for $j\geqslant k+1$. Thus $\mu_*=\mu_1(\alpha_*)=\frac{-d+\sqrt{d^2+4\alpha_*}}{2}$.
\item If $2\sqrt{\abs{\alpha_{n-1}}}\leqslant d<\abs{\alpha_*}+1$, then $\mu_*=\mu_1(\alpha_*)=\frac{-d+\sqrt{d^2+4\alpha_*}}{2}$.
\end{itemize}

In either case, note that $\mu_*=\frac{-d+\sqrt{d^2+4\alpha_*}}{2}$ is increasing w.r.t $d$. Thus $d<\abs{\alpha_*}+1$ implies that $\mu_*<\alpha_*$. 
\end{proof}

\begin{remark}
The optimal choice is in the form of $d=2\sqrt{\abs{\alpha_*}}$, which resembles the choice of the damping parameter in the classical Nesterov's acceleration \eqref{eq:su_ODE}, and also our choice of $\gamma(t)$ in \Cref{thm:convergence_rate}. It is worth noting that $\alpha_*$ refers to the largest negative eigenvalue of $Q$ in \eqref{eq:Kolmogorov}, while $\lambda$ in the Nesterov's method as well as \Cref{thm:convergence_rate} refers to the $\lambda$-strong convexity of the function, satisfying $0<\lambda\leqslant \frac{\min_i \frac{1}{\pi_i}}{\lambda_{\mathrm{max}}((-\omega)^{\dagger})}$. 

With the optimal damping parameter, $\mu_*=-\sqrt{\abs{\alpha_*}}$ is the optimal convergence rate of \texttt{Chi-squared} method in terms of spectral analysis.

\end{remark}

\subsubsection{\texttt{con-Fisher} method}

To check if $\mathcal{U}(p)=\I(p\|\pi)=\frac{1}{4}\sum_{i=1}^n \sum_{j\neq i}\omega_{ij}\theta_{ij}(\log \frac{p_i}{\pi_i}-\log \frac{p_j}{\pi_j})^2$ is geodesically $\lambda$-strongly convex, we need to find a positive constant $\lambda$ such that
    \[
    \begin{aligned}
        &\I(\hat{p}\|\pi)-\I(p\|\pi)-(\hat{p}-p)\nabla_p \I(p\|\pi)\\
        =&\frac{1}{4}\sum_{i=1}^n \sum_{j\neq i}\omega_{ij}\theta_{ij}[\log \frac{\pi_j^2 p_i \hat{p}_i }{\pi_i^2 p_j \hat{p}_j}\log \frac{\hat{p}_i p_j}{p_i \hat{p}_j}]-\sum_{i=1}^n (\hat{p}_i-p_i)\frac{\pi_i}{p_i}\sum_{j\neq i}Q_{ij}\theta_{ij}\log \frac{\pi_j p_i }{\pi_i p_j }\\
        =&\sum_{i=1}^n\sum_{j\neq i}\omega_{ij}\theta_{ij}\left[\frac{1}{4}\log \left(\frac{\hat{p}_i p_j}{p_i \hat{p}_j }\right)^2 + \frac{1}{2}\log \left(\frac{\hat{p}_i p_j}{p_i \hat{p}_j }\right)\log \frac{\pi_j p_i}{\pi_i p_j}-(\frac{\hat{p}_i}{p_i}-1)\log \frac{\pi_j p_i}{\pi_i p_j}\right]\\
        \geqslant &\frac{\lambda}{2}(\hat{p}-p)K^{\dagger}(\hat{p}-p)^{\top},
    \end{aligned}
    \]
where $K=(K_{ij})$ and $K_{ij}=-\omega_{ij}\theta_{ij}$. It seems nontrivial to find a sufficient condition at this moment. Thus we compute the Hessian $\mathrm{D}^2\mathcal{U}$ instead. \Cref{app:Hessian} provides detailed derivations of the explicit form of the Hessian and its related properties. 

The positive constant $\lambda$ can be obtained by studying the Rayleigh quotient problem
    \begin{equation}\label{eq:rayleigh}
    \min_{\psi \Id_n^{\top}=0} \dfrac{\psi K \mathrm{D}^2\mathcal{U}(p)K^{\top}\psi^{\top}}{\psi K\psi^{\top}},
    \end{equation}
which can be reformulated as an eigenvalue problem from a computational standpoint; see \Cref{app:Hessian} for details. 

\begin{lemma}\label[lemma]{lem:lambda-strong}
Given a strictly positive target probability $\pi$ and an irreducible transition-rate matrix $Q$, we consider the probability manifold $\P(V)$ on a graph $ G=(V,E,\omega)$, with the potential $\mathcal{U}(p)=\I(p\|\pi)=\frac{1}{4}\sum_{i=1}^n \sum_{j\neq i}\omega_{ij}\theta_{ij}(\log \frac{p_i}{\pi_i}-\log \frac{p_j}{\pi_j})^2$, {\color{blue} where $(\theta_{ij})$ is a constant matrix whose entries are positive.

Let $K=(K_{ij})$ be the associated Onsager's response matrix, where $K_{ij}=-\omega_{ij}\theta_{ij}$ and $K_{ii}=-\sum_{j\neq i}K_{ij}$.} Then there exists a positive constant $\lambda$ such that $\mathrm{D}^2 \mathcal{U}(p)\vert_{p=\pi}\geqslant \lambda K^{\dagger}$.
\end{lemma}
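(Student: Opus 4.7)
The plan is to compute the Hessian $\mathrm{D}^2\mathcal{U}(\pi)$ in closed form, recognize the desired inequality as a generalized Rayleigh quotient problem on the tangent space $T_\pi\mathbb{P}(V) = \Id_n^\perp$, and conclude by a compactness argument once the two null directions have been identified and shown to be transverse.

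First I would carry out the Hessian computation. Write $\ell_i(p) = \log(p_i/\pi_i)$, so that $\ell_i(\pi)=0$. A direct chain-rule calculation, using the symmetries $\omega_{ij}=\omega_{ji}$ and $\theta_{ij}=\theta_{ji}$, gives
\[
\partial_{p_k}\mathcal{U}(p) = \frac{1}{p_k}\sum_{j\neq k}\omega_{kj}\theta_{kj}\bigl(\ell_k(p)-\ell_j(p)\bigr).
\]
Differentiating once more and evaluating at $p=\pi$, the contribution coming from differentiating the $1/p_k$ prefactor vanishes because $\ell_k(\pi)-\ell_j(\pi)=0$, and we are left with
\[
\partial_{p_l}\partial_{p_k}\mathcal{U}(\pi) = \frac{1}{\pi_k\pi_l}K_{kl},
\]
where $K$ is the constant Onsager response matrix from \eqref{def:Onsager_response}. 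Equivalently, $\mathrm{D}^2\mathcal{U}(\pi) = D^{-1} K D^{-1}$ with $D = \mathrm{diag}(\pi_1,\ldots,\pi_n)$.

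Next, the claim $\mathrm{D}^2\mathcal{U}(\pi) \geqslant \lambda K^\dagger$ on $T_\pi\mathbb{P}(V)=\Id_n^\perp$ becomes the Rayleigh-quotient bound
\[
\lambda \;\leqslant\; \inf_{v\in \Id_n^\perp\setminus\{0\}} \frac{v D^{-1} K D^{-1} v^\top}{v K^\dagger v^\top}.
\]
The denominator is strictly positive on $\Id_n^\perp\setminus\{0\}$ since $K^\dagger$ is positive definite on $\mathrm{Range}(K)=\Id_n^\perp$, as recalled in Section~\ref{sub:Maas}. For the numerator, rewrite $v D^{-1} K D^{-1} v^\top = (vD^{-1}) K (vD^{-1})^\top$. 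This vanishes precisely when $vD^{-1} \in \mathrm{Ker}(K) = \mathrm{span}(\Id_n)$, i.e.\ when $v \in \mathrm{span}(\Id_n D) = \mathrm{span}(\pi)$.

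The main step is then the transversality $\Id_n^\perp \cap \mathrm{span}(\pi) = \{0\}$: this follows from $\pi\cdot \Id_n^\top = \sum_i \pi_i = 1 \neq 0$, so $\pi \notin \Id_n^\perp$. Hence on $\Id_n^\perp\setminus\{0\}$ both numerator and denominator are strictly positive and continuous, and the infimum over the compact unit sphere in $\Id_n^\perp$ is attained at some $\lambda>0$. The anticipated obstacle is precisely this kernel-transversality check; irreducibility and reversibility of $Q$ are what guarantee $\mathrm{Ker}(K)=\mathrm{span}(\Id_n)$, and the fact that $\pi$ is a probability vector (not a tangent vector) is what makes the intersection trivial.
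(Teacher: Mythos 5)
Your proof is correct and follows essentially the same route as the paper's: both compute $\mathrm{D}^2\mathcal{U}(p)\vert_{p=\pi}=\mathrm{diag}(\pi)^{-1}K\,\mathrm{diag}(\pi)^{-1}$ and reduce the claim to strict positivity of the generalized Rayleigh quotient over the tangent space $\Id_n^{\perp}$. The only difference is in how positivity is certified --- the paper uses the SVD reduction of \Cref{append:Hessian} and the nonsingularity of the factor $S^{\top}\sqrt{\widehat{\Sigma}}$, whereas you identify the null direction of the numerator as $\mathrm{span}(\pi)$ and note it is transverse to $\Id_n^{\perp}$ since $\sum_i \pi_i = 1$ --- which are two equivalent ways of invoking the same non-degeneracy.
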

Consequently, we may use this $\lambda$ computed from \texttt{con-Fisher} at stationary distribution $p=\pi$ to construct the asymptotical limit of the damping parameter $\gamma(t)=2\sqrt{\lambda}$ in the \texttt{log-Fisher} method.

\section{Numerical Schemes}\label{sec:alg}

In this section, we outline numerical schemes employed in subsequent numerical experiments for the ODE integrator of \eqref{eq:NODE} and the jump process associated with \eqref{eq:Kolmogorov-Ham}, \eqref{eq:AMH-jump} in aMCMC.

\subsection{ODE Integrator}

The classical Hamiltonian Monte Carlo (HMC), requires specialized discretization schemes, such as the symplectic Euler scheme for volume-preserving or the Leapfrog scheme for quantity-conserving \citep[see][]{Neal2011HMC}. For our proposed damped Hamiltonian dynamics \eqref{eq:NODE}, we adopt the staggered scheme with splitting methods. Specifically, given a prescribed damping parameter $\gamma(t)$, we can express \eqref{eq:NODE} as:
    \[
    \left\{
    \begin{aligned}
        \frac{\mathrm{d}p(t)}{\mathrm{d}t}&=A(p(t),\psi(t)),\\
        \frac{\mathrm{d}\psi(t)}{\mathrm{d}t}&=B(p(t),\psi(t),\gamma(t)),
    \end{aligned}
    \right.
    \]
where $A(p(t),\psi(t))=\partial_{\psi} \mathcal{H}(p(t),\psi(t))$ and $B(p(t),\psi(t),\gamma(t))=-\gamma(t)\psi(t)-\partial_{p} \mathcal{H}(p(t),\psi(t))$. The associated staggered scheme is given by
    \begin{subequations}\label{eq:symplectic}
    \begin{empheq}[left=\empheqlbrace]{align}
    &\frac{p^{(k+1)}-p^{(k)}}{\Delta t} = A(p^{(k)},\psi^{(k)}),\label{eq:forward-master}\\
    &\frac{\psi^{(k+1)}-\psi^{(k)}}{\Delta t}  = B(p^{(k+1)},\psi^{(k)},\gamma^{(k)})\label{eq:psi}.
    \end{empheq}
    \end{subequations}
When $\mathbb{K}$ does not depend on $p$, the Hamiltonian $\mathcal{H}(p,\psi)$ is separable. Thus \eqref{eq:symplectic} is indeed the symplectic Euler scheme. Note that \eqref{eq:symplectic} involves designing an interacting particle system, which requires simultaneously integrating the coupled system of all state variables and momentum variables. In contrast, the ODE solver of the forward master equation \eqref{eq:Kolmogorov} in the MH can integrate each state variable independently.

Since the Hamiltonian $\mathcal{H}(p,\psi)$ is known to decay along the dynamics according to \eqref{eq:Decay-Hamiltonian}, it is therefore natural to employ a numerical scheme that preserves this dissipation property. In our numerical experiments, we select a sufficiently small step size to numerically approximate this continuous-time decay property.

\subsection{Jump Process}

{\color{blue}
For the forward master equation \eqref{eq:NODE_1}, an equivalent form \eqref{eq:Kolmogorov-Ham} is proposed for the jump process of state variables. Consequently, both classical MH update and the proposed aMCMC admit the transition matrix of the form $P=I_n+Q\Delta t$, where MH takes $Q=Q^{\mathrm{MH}}$ in \eqref{eq:Q-MH} and aMCMC takes $Q=\bar{Q}^r_{\psi}$ in \eqref{eq:AMH-jump}. The step size $\Delta t$ is chosen sufficiently small to ensure that all entries of $P$ are nonnegative. 

The construction of $\bar{Q}^r_{\psi}$ requires components $p_i(t)$ to remain strictly positive. This property is guaranteed by \Cref{thm:positivity} for \texttt{log-Fisher} and \texttt{con-Fisher} methods. In practical computations, we additionally employ \textit{restart} techniques, detailed in Section 6, to correct potential degeneracies $p_i(t)=0$ due to random sampling. 
 
Since most target distributions in our numerical experiments are multimodal, as stated in Remark 1, we implement the Metropolis-Hastings jump process in a multi-chain setting, to mitigate sensitivity to poor initialization of a single long chain. This implementation is described in \Cref{alg:MH2} and serves as a baseline for comparison with aMCMC, based on approximations to their forward master equations.

In contrast to MH, the aMCMC transition-rate matrix $\bar{Q}^r_{\psi}$ is time-inhomogeneous and depends on both the empirical distribution $p(t)$ and momentum variables $\psi(t)$, indicating that particles are interacting and exchanging information from the histograms. While the state variables evolve via a jump process, there is no corresponding jump-process formulation for the momentum variable, thus we employ the ODE integrator to solve \eqref{eq:psi}.
}

\begin{algorithm}[hp!tb]
\caption{Metropolis--Hastings Sampling in Multiple Chains\label{alg:MH2}}
\SetKwInOut{Input}{Input}
\SetKwInOut{Output}{Output}

\Input{Initial distribution $p^{(0)}$; transition rate matrix $Q^{\mathrm{MH}}$; total number of states $n$; number of particles $M$; step size $\Delta t$; total iterations $N$.}
\Output{Terminal distribution $p^{(N)}$.}

\BlankLine
\textbf{Initialize:} Transition probability matrix
\[
P^{\mathrm{MH}} \gets I_n + Q^{\mathrm{MH}} \Delta t.
\]
\textbf{Sample:} Draw $M$ particles according to $p^{(0)}$, and record state counts:
\[
\texttt{bin}(i) \gets \#\{\text{particles in state } i\}, \quad i = 1, \dots, n.
\]

\BlankLine
\For{$k \gets 1$ \KwTo $N$}{
  \ForPar{$s \gets 1$ \KwTo $n$}{
    \tcp{Simulate transitions from state $s$}
    Draw $\texttt{bin}(s)$ samples from $\{1,\dots,n\}$ with probabilities $P^{\mathrm{MH}}(s,:)$\;
    Record the number of arrivals into each state $i$:
    \[
    \texttt{tmp}(s,i) \gets \#\{\text{particles jumping from } s \text{ to } i\}.
    \]
  }
  \tcp{Aggregate arrivals across all source states}
  \[
  \texttt{bin}(i) \gets \sum_{s=1}^n \texttt{tmp}(s,i), \quad i = 1, \dots, n.
  \]
}

\BlankLine
\textbf{Return:} Terminal distribution
\[
p^{(N)} \gets \frac{1}{M} \, \texttt{bin}.
\]
\end{algorithm}

\begin{algorithm}[hp!]
\caption{Accelerated MCMC (aMCMC) Sampling\label{alg:aMCMC}}
\SetKwInOut{Input}{Input}
\SetKwInOut{Output}{Output}

\Input{
Initial distribution $p^{(0)}$; transition rate matrix $Q$;
total number of states $n$; number of particles $M$;
step size $\Delta t$;
warm-start iterations $L$; total iterations $N$.
}
\Output{Terminal distribution $p^{(N)}$.}

\textbf{Warm-start:}
Run $L$ iterations of Algorithm~\ref{alg:MH2} starting from $p^{(0)}$, and obtain $p^{(L)}$. Initialize $\psi^{(L)}$ using the strategy depending on the mobility weight matrix $(\theta_{ij})$.

\textbf{Initialize:} Time-inhomogeneous transition probability matrix $P \gets I_n + \bar{Q}^r_{\psi}\,\Delta t$, where the new transition rate matrix $\bar{Q}^r_{\psi}$ in \eqref{eq:AMH-jump} is constructed from $p^{(L)}$ and $\psi^{(L)}$.

\textbf{Sample:}
Draw $M$ particles according to $\rho^{(L)}$, and record state counts
\vspace{-0.5em}
\[
\texttt{bin}(i) \gets \#\{\text{particles in state } i\}, \quad i = 1,\dots,n.
\]

\For{$k \gets L+1$ \KwTo $N$}{
  \ForPar{$s \gets 1$ \KwTo $n$}{
    \tcp{Simulate transitions from state $s$}
    Draw $\texttt{bin}(s)$ samples from $\{1,\dots,n\}$ with probabilities $P(s,:)$\;
    Record the number of arrivals into each state $i$:
    \vspace{-0.5em}
    \[
    \texttt{tmp}(s,i) \gets \#\{\text{particles jumping from } s \text{ to } i\}.
    \]
  }
  \tcp{Aggregate arrivals across all source states}
  $\texttt{bin}(i) \gets \sum_{s=1}^n \texttt{tmp}(s,i), \quad i = 1,\dots,n.$ 
  
  \tcp{Restart mechanism}
  \eIf{restart condition is triggered due to any empty $\texttt{bin}(k)$}{
    Add particles to all empty $\texttt{bin}$\;
    Update $p^{(k)}_i\gets \frac{1}{M_k}\,\texttt{bin}(i),$where $M_k$ denotes the current total number of particles at the $k$-th step\;
    Reinitialize all momentum variables $\psi^{(i)}$ according to the restart strategy.
  }{
  Update $p^{(k)}_i\gets \frac{1}{M_k}\,\texttt{bin}(i)$\;
  Update $\psi^{(k)}$ via its ODE solver.
  }
  Compute $\bar{Q}_{\psi}^r$ accordingly. 

  \tcp{Adaptive step-size control (local backtracking)}
  $\Delta t^{(k)} \gets \Delta t$\;
  \While{$P = I_n + \bar{Q}^r_{\psi}\,\Delta t^{(k)}$ is not a valid transition matrix}{
    $\Delta t^{(k)} \gets \Delta t^{(k)} / 10$\;
    $P \gets I_n + \bar{Q}^r_{\psi}\,\Delta t^{(k)}$\;
  }

}

\textbf{Return:}
Terminal distribution $p^{(N)}$.
\end{algorithm}

\subsection{Initialization and Restart}\label{sub:restart}

For the classical MCMC with the MH update, we initialize the state distribution $p(0)$ by a uniform distribution. While it is well known that MCMC methods generally require a \textit{burn-in} period, where the first few iterations are discarded to mitigate the effects of a poor starting point \citep{Geyer2011Introduction}, we retain all iterations in our experiments reported in \Cref{sec:num}. For aMCMC, in addition to initializing the state variables with a uniform distribution, the momentum variables must also be specified. We observe the following relationship: 
\begin{itemize}
\item $\theta_{ij}=1$ in \texttt{Chi-squared} and \texttt{con-Fisher} methods. Under the choice $\psi_j=-\frac{p_j}{\pi_j}$, \eqref{eq:NODE_1} becomes
    \[
    \frac{\mathrm{d}p_i}{\mathrm{d}t}=\psi_i\sum_{j\neq i}\omega_{ij} -\sum_{j\neq i}\psi_j \omega_{ji}=-\frac{p_i}{\pi_i}\sum_{j\neq i}\pi_i Q_{ij}+\sum_{j\neq i}\frac{p_j}{\pi_j}\pi_j Q_{ji}=\sum_{j\neq i}p_jQ_{ji}-p_i Q_{ij},
    \]
which is precisely the forward master equation for the classical MCMC. 

\item $\theta_{ij}=\frac{\frac{p_j}{\pi_j}-\frac{p_i}{\pi_i}}{\log \frac{\pi_i p_j}{\pi_j p_i}}$ in \texttt{KL} and \texttt{log-Fisher} methods. Under the choice $\psi_j=-\log\frac{p_j}{\pi_j}$, \eqref{eq:NODE_1} becomes
    \[
    \frac{\mathrm{d}p_i}{\mathrm{d}t}=\sum_{j\neq i}(-\log\frac{p_i}{\pi_i}+\log\frac{p_j}{\pi_j})\frac{\frac{p_i}{\pi_i}-\frac{p_j}{\pi_j}}{\log\frac{p_i}{\pi_i}-\log\frac{p_j}{\pi_j}}\pi_i Q_{ij}=\sum_{j\neq i}\frac{p_j}{\pi_j}\pi_j Q_{ji} - \frac{p_i}{\pi_i} \pi_i Q_{ij},
    \]
which recovers the forward master equation for the classical MCMC. 
\end{itemize}
Consequently, the state variables in \eqref{eq:NODE_1} evolve identically to classical MCMC, once employing the above assignments of momentum variables. This observation allows us to employ classical MCMC as a warm-start for aMCMC. Specifically, after a prescribed number of MH iterations, we initialize the momentum variables, by $\psi_j(0)=-\frac{p_j(0)}{\pi_j}$ for \texttt{Chi-squared} and \texttt{con-Fisher} methods, or by $\psi_j(0)=-\log\frac{p_j(0)}{\pi_j}$ for \texttt{KL} and \texttt{log-Fisher} methods. 

Given that the target distribution $\pi$ is strictly positive and the Markov chain is reversible and irreducible, \Cref{thm:positivity} ensures strict positivity of the state variables $p(t)$ for both the \texttt{log-Fisher} and \texttt{con-Fisher} methods. It suggests that when when $p_i(t)$ approaches zero, there is a sufficiently large momentum to pull $p_i(t)$ away from zero. Nonetheless, in numerical simulations, accumulation of truncation errors and large step sizes could lead to instances where the state variables become zero or even negative. 

For the ease of our discussion, denote by $p_{\textrm{i,ode}}^{(k)}$ the numerical value of $p_i(t)$ in \eqref{eq:NODE} at the $k$-th iteration using Euler scheme. Denote by  $p_{\textrm{i,jump}}^{(k)}$ the empirical probability density of state $i$ at the $k$-th iteration, which is calculated by the ratio between the number of particles in state $i$ at the $k$-th iteration and the total number of particles $M$. We first opt for adaptive step sizes (see line 20-23 in \Cref{alg:aMCMC}) to  prevent any state variables $p_{\textrm{i,ode/jump}}^{(k)}$ from becoming negative. In practice, this is usually sufficient when solving $p_{\textrm{i,ode}}^{(k)}$ in \eqref{eq:NODE} with \texttt{log-Fisher} and \texttt{con-Fisher} methods using Euler scheme. However, due to the random sampling following the transition probability matrix $P=I_n + \bar{Q}^r_{\psi}\Delta t$, a small sample size $M$ may cause the empirical particle density $p_{\textrm{i,jump}}^{(k)}$ to become zero even when $p_{\textrm{i,ode}}^{(k)}>0$.

Second, if $p_{\textrm{i,ode/jump}}^{(k)}=0$ after some iterations, we employ a simple \textit{restart} mechanism by the MH update that works for any aMCMC method, described below:

\textit{Restart} (ODE solver): if $p_{\textrm{i,ode}}^{(k)}=0$ for some state $i$, for all state $j$ we set $\psi_{\textrm{j,ode}}^{(k)}=-\frac{p_{\textrm{j,ode}}^{(k)}}{\pi_j}$ in \texttt{Chi-squared} and \texttt{con-Fisher} methods, or $\psi_{\textrm{j,ode}}^{(k)}=-\log\frac{p_{\textrm{j,ode}}^{(k)}}{\pi_j}$ in \texttt{KL} and \texttt{log-Fisher} methods. This choice ensures the $(k+1)$-th iteration on the state variables effectively runs \textit{one} iteration of the MH update, thereby pulling $p_{\textrm{i,ode}}$ away from 0.

\textit{Restart} (jump process): if $p_{\textrm{i,jump}}^{(k)}=0$, i.e., the number of particles at state $i$ is zero after random sampling at the $k$-th iteration, then we add \textit{one} particle to state $i$. Note that this also increases the total number of particles by \textit{one}. The updated value of $p_{\textrm{i,jump}}^{(k)}$ is then set to $(M_{\mathrm{old}} + 1)^{-1}$, where $M_{\mathrm{old}}$ is the total amount of particles before adding the new particle. Then we set $\psi_{\textrm{j,jump}}^{(k)}=-\frac{p_{\textrm{j,jump}}^{(k)}}{\pi_j}$ in \texttt{Chi-squared} and \texttt{con-Fisher} methods, or $\psi_{\textrm{j,jump}}^{(k)}=-\log\frac{p_{\textrm{j,jump}}^{(k)}}{\pi_j}$ in \texttt{KL} and \texttt{log-Fisher} methods. In our experiments, the \texttt{log-Fisher} method barely requires restart.

Multiple iterations of Metropolis–Hastings restarts or the addition of multiple particles can be performed in experiments to ensure the strict positivity of state variables, albeit at the cost of deviating from the proposed damped Hamiltonian flow.

\section{Numerical Examples}\label{sec:num}

In this section, {\color{blue} we compare the proposed aMCMC with the Metropolis-Hastings method implemented in multi-chains}. Our code is available at \url{https://github.com/silentmovie/AMCMC}. {\color{blue} All experiments in this section are conducted in MATLAB on a MacBook Pro M1 Max. Additional experiments performed on the UCSB computing clusters are reported in the appendix.} 

\begin{figure}[hp!tb]
    \centering
    \begin{subfigure}[t]{0.24\textwidth}
         \centering
         \includegraphics[width=\textwidth]{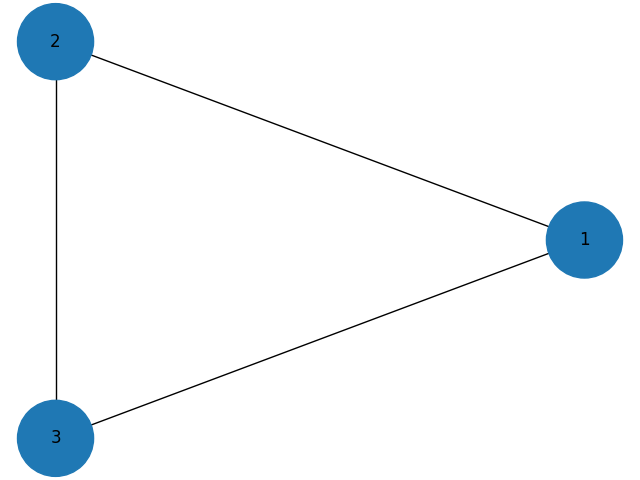}
         \caption{Circular graph.\label{fig:C3}}
     \end{subfigure}
     \hfill
     \begin{subfigure}[t]{0.24\textwidth}
     \centering
     \includegraphics[width=\textwidth]{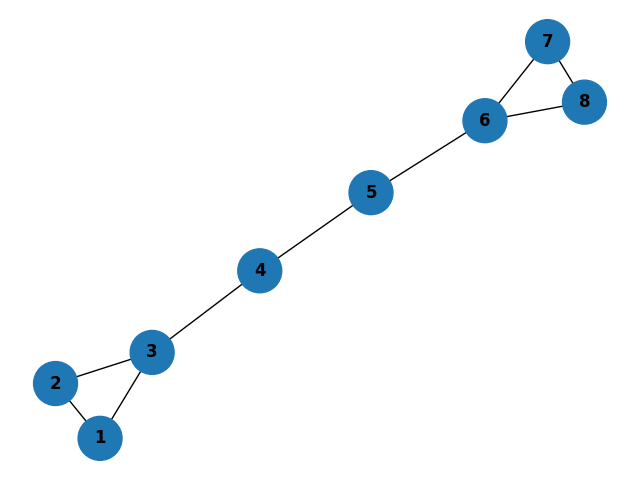}
     \caption{Two-loop graph.}\label{fig:2loop}
     \end{subfigure}
     \hfill
     \begin{subfigure}[t]{0.24\textwidth}
     \centering
     \includegraphics[width=\textwidth]{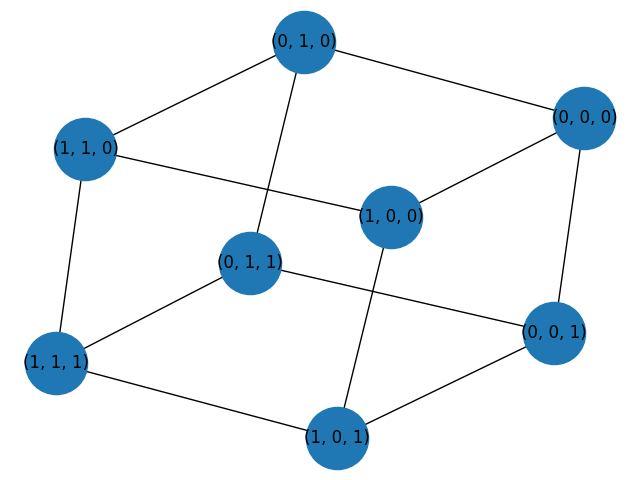}
     \caption{Hypercube.}\label{fig:hypercube}
     \end{subfigure}
     \hfill
     \begin{subfigure}[t]{0.24\textwidth}
     \centering
     \includegraphics[width=\textwidth]{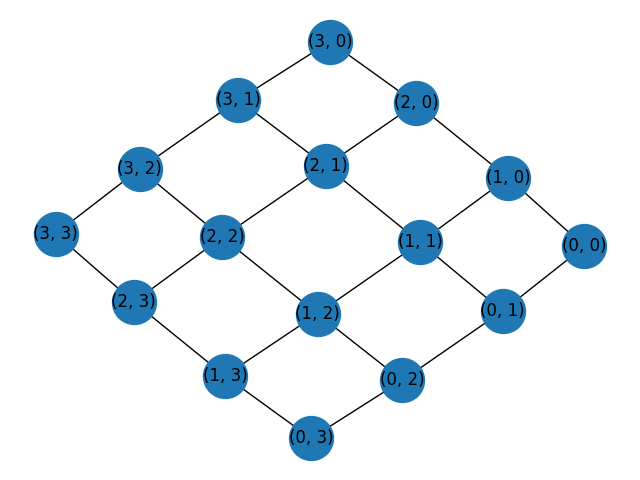}
     \caption{Lattice.}\label{fig:lattice}
     \end{subfigure} 
    \caption{Graph configurations employed in the numerical experiments.\label{fig:GraphConfig}}
\end{figure}

All experiments are conducted on a variety of connected and irreducible graphs, as illustrated in \Cref{fig:GraphConfig}. Given a graph $G=(V,E)$, we employ the simple random walk as the candidate kernel, defined by: $
    q_{ij}=\left\{
    \begin{aligned}
    &\frac{1}{\textrm{deg}(i)}\qquad &e_{ij}\in E,\\
    &0\qquad &\textrm{otherwise}.
    \end{aligned}\right.
    $
The corresponding Metropolis–Hastings transition rate matrix is constructed as $Q^{\textrm{MH}}_{ij}=\min \left\{\frac{\pi_j}{\pi_i} q_{ji},q_{ij}\right\}$ and the associated weight matrix $\omega$ is given by $\omega_{ij}=\pi_i Q^{\textrm{MH}}_{ij}$. We compare MCMC and aMCMC on the same weighted undirected graph $G=(V,E,\omega)$, using numerical schemes described in \Cref{sec:alg}. In what follows, we illustrate two representative methods: the \texttt{Chi-squared} method for small-scale graphs, and the \texttt{log-Fisher} method for various graph configurations.

{\color{blue} Among methods that do not preserve strict positivity, we select \texttt{Chi-squared} method over \texttt{KL} method, in order to numerically validate \Cref{thm:chi}. We then demonstrate the numerical performance of the \texttt{log-Fisher} algorithm in comparison with the MH algorithm, highlighting its superior convergence accuracy in the $\ell^2$ density norm.  

In all experiments, comparisons are conducted with a fixed total number of iterations, and we additionally report comparisons under a fixed wall-clock time in the last experiment, since aMCMC methods may allow adaptive step sizes, so the effective time spans may differ from those of MH algorithms. Convergence speed, the accuracy for approximating the target distribution $\pi$ as well as its normalizing constant $Z$ are reported.}

\subsection{Acceleration and Accuracy via \texttt{Chi-squared} Method}
We start with a toy example on the circular graph $C_3$  (see \Cref{fig:C3}). The target distribution is $\pi=[0.9913,0.0044,0.0043]$ for which the largest negative eigenvalue of $Q^{\textrm{MH}}$ is close to zero, more precisely, $\alpha_*\approx -0.5044$. We choose the damping parameter $ \gamma(t)=2\sqrt{\abs{\alpha_*}}=1.4220$ as suggested by \Cref{thm:chi}. With this choice, the largest negative eigenvalue of matrix $L$ in \eqref{eq:chi-matrix} is $\mu_*=-0.7102<\alpha_*$, thereby suggesting an accelerated convergence rate by the \texttt{Chi-squared} method.

The parameters for MH and \texttt{Chi-squared} methods are chosen identically:
    \[\textrm{sampling size~}M=10^6,\quad\textrm{step size~}\Delta t= 0.1\textrm{~or~}0.01,\quad\textrm{total iterations~}N=650\textrm{~or~}6500.\]
We use small step sizes and a large sampling size to  accurately approximate the proposed damped Hamiltonian dynamics, as validated by \Cref{fig:C3-valid}. We initialize $p(0)$ as the uniform distribution and $\psi_j(0)=-\frac{p_j(0)}{\pi_j}$ and the warm-start is not employed in this experiment. Notably, neither adaptive step size adjustment nor restart mechanisms discussed in \Cref{sub:restart} were required in these experiments, yielding a consistent effective time span $[0,65]$ across all examples shown in \Cref{fig:C3-performance}.

\Cref{fig:C3-1} and \Cref{fig:C3-2} corroborate the theoretical results in \Cref{thm:chi} under different step sizes, demonstrating that \texttt{Chi-squared} achieves a faster convergence than the MH update in both ODE and jump process. Notably, jump process sampling achieves a higher order of accuracy $\mathcal{O}(\frac{1}{M})$. When a smaller step size is used, as shown in \Cref{fig:C3-2}, the numerical dynamics adhere more closely to the damped Hamiltonian, further accelerating convergence. 

\begin{figure}[hp!tb]
\begin{minipage}[b]{0.5\textwidth}
  \begin{subfigure}[b]{\linewidth}
    \includegraphics[width=\linewidth, keepaspectratio]{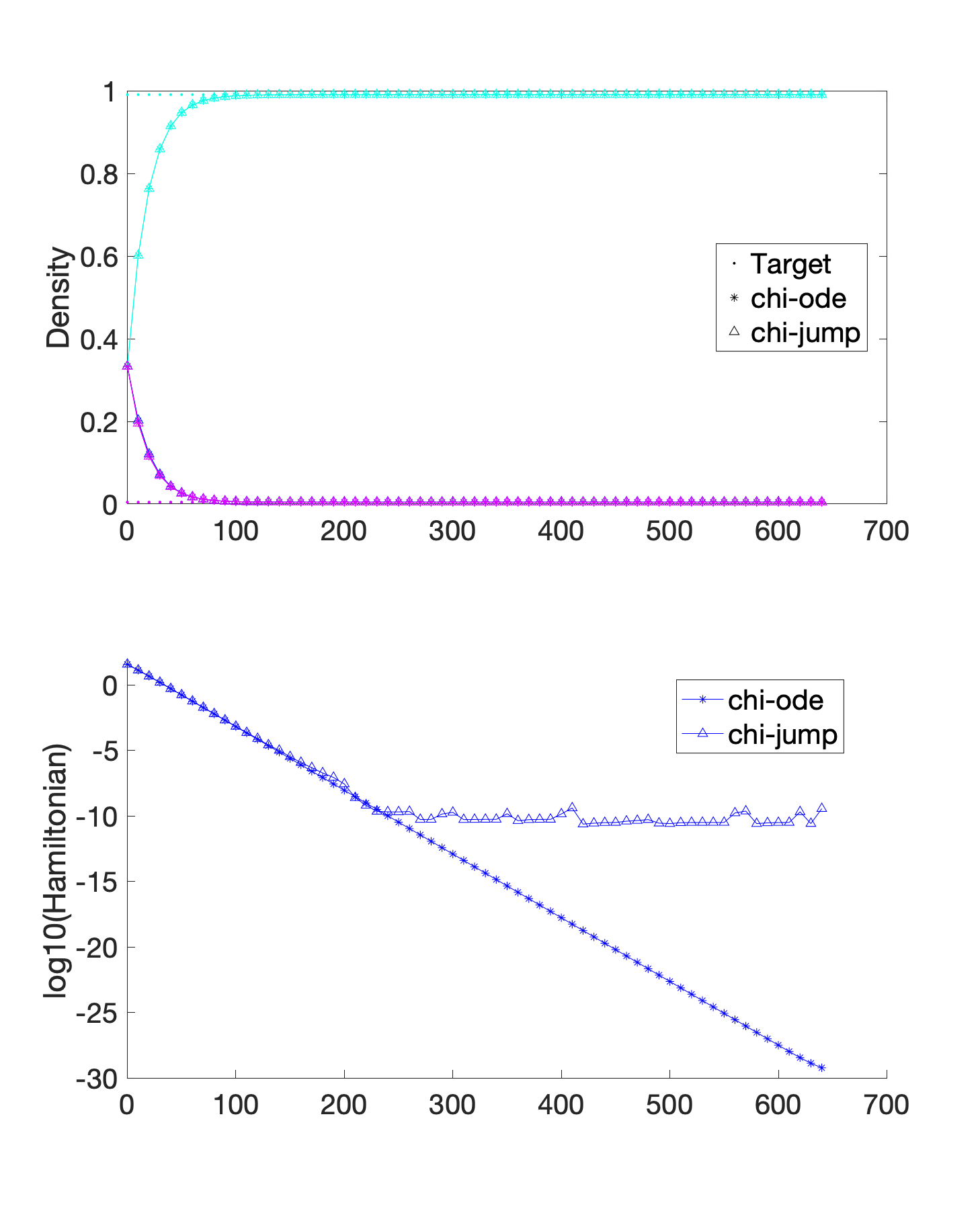}
    \caption{Validation of \texttt{Chi-squared}.\label{fig:C3-valid}}
  \end{subfigure}
\end{minipage}%
\hfill
\begin{minipage}[b]{0.5\textwidth}
  \begin{subfigure}[b]{\linewidth}
    \includegraphics[width=\linewidth, keepaspectratio]{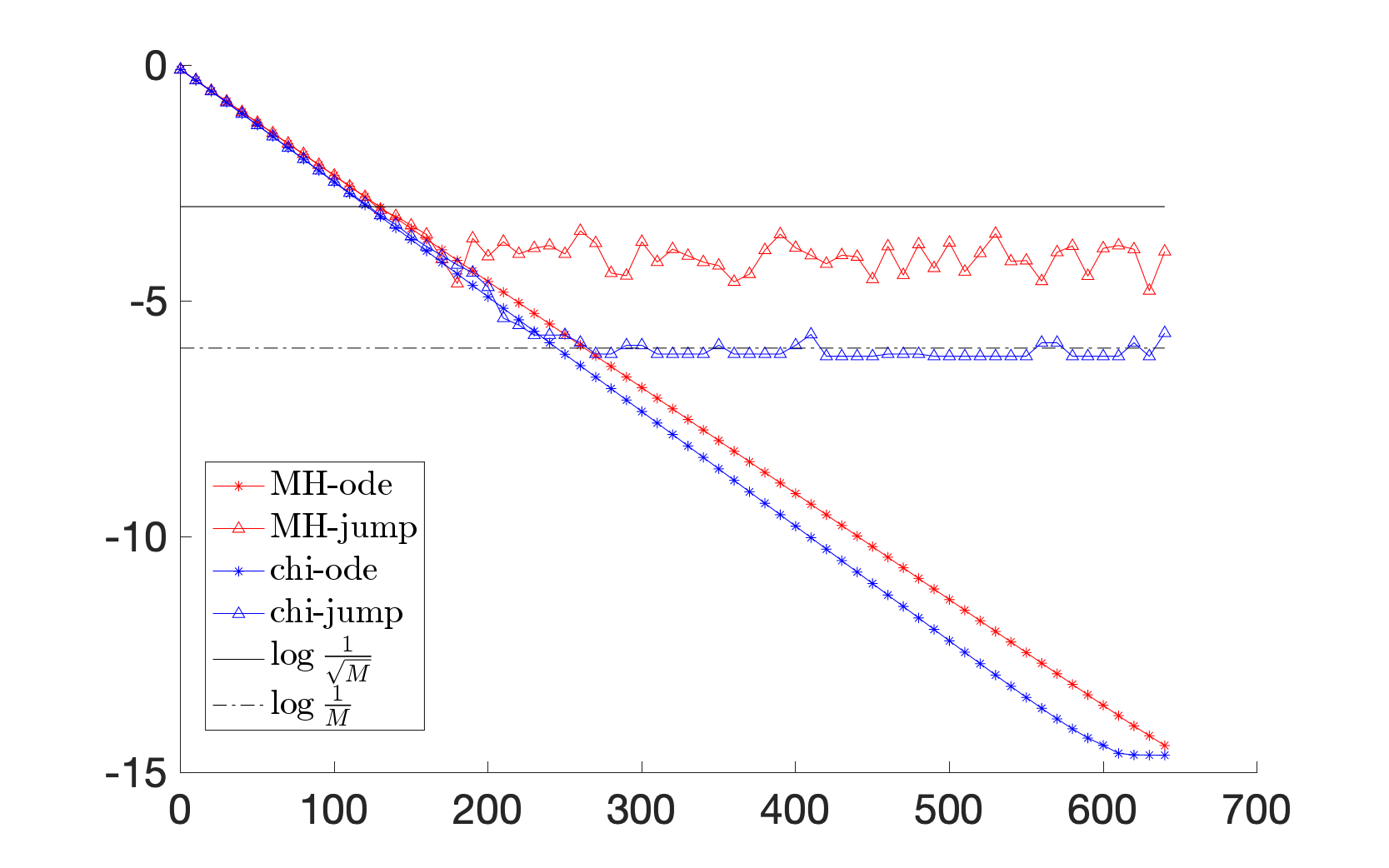}
    \caption{$\log_{10}\norm{p(t)-\pi}_2$ with $\Delta t=0.1$.\label{fig:C3-1}}
  \end{subfigure}\par
  \vfill
  \begin{subfigure}[b]{\linewidth}
    \includegraphics[width=\linewidth, keepaspectratio]{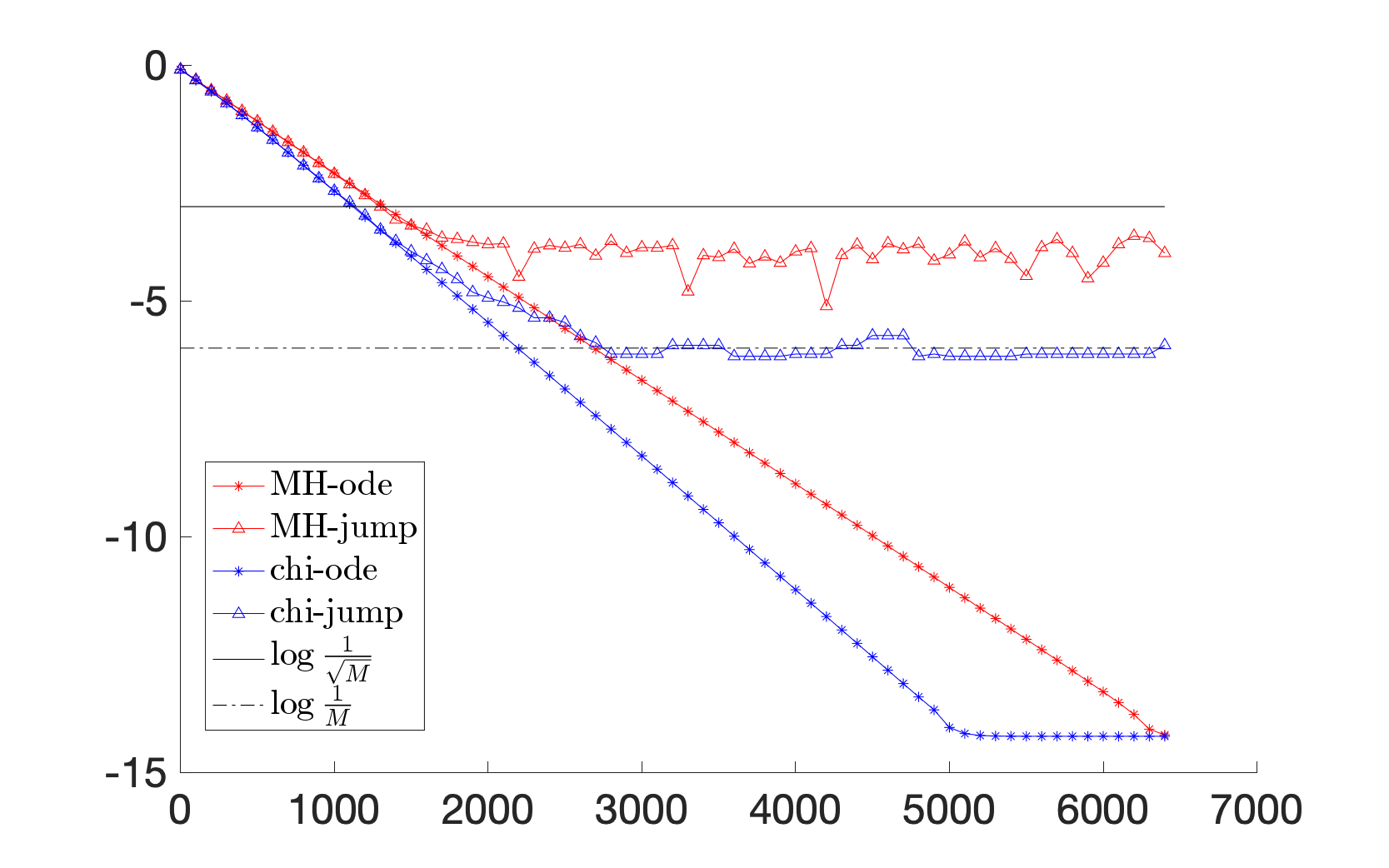}
    \caption{$\log_{10}\norm{p(t)-\pi}_2$ with $\Delta t=0.01$.\label{fig:C3-2}}
  \end{subfigure}
\end{minipage}
\caption{Accelerated sampling on $C_3$ graph \Cref{fig:C3} via \texttt{Chi-squared} method. x-axes are in iterations and all results are over time span $[0,65]$ when both ODE solvers achieve machine precision. (a) Validation of the \texttt{Chi-squared} method using step size $\Delta t=0.1$, showing density convergence and the decay of $\log_{10}(\mathcal{H}(p(t),\psi(t)))$. (b) and (c) show the decay of $\log_{10}\norm{p(t)-\pi}_2$ for step sizes $\Delta t=0.1$ and $\Delta t=0.01$ respectively. Notably, chi-jump achieves an error of order $\mathcal{O}(1/M)$. \label{fig:C3-performance}}
\end{figure}

\subsection{Acceleration and Accuracy via \texttt{log-Fisher} Method}

Now we test \texttt{log-Fisher} method on the other graph configurations in \Cref{fig:GraphConfig}. Certain geometric features of a Markov Chain influence its mixing time, with bottlenecks being a notable example \citep[See][]{Levin2009Markov}. A two-loop graph (see \Cref{fig:2loop}) bridged by a thin ``bottleneck'' is one of those examples. We apply \texttt{log-Fisher} method on this configuration first. The target distribution is given by
$\pi=[\frac{4}{27},\frac{4}{27},\frac{4}{27},\frac{1}{18},\frac{1}{18},\frac{4}{27},\frac{4}{27},\frac{4}{27}]$ so that the largest negative eigenvalue to the associated $Q^{\textrm{MH}}$ is small, precisely $-3.79\times 10^{-2}$. The parameters for MH and \texttt{log-Fisher} methods are chosen identically:
    \[\textrm{sampling size~}M=10^4,\qquad\textrm{step size~}\Delta t= 0.1\,\qquad\textrm{total iterations~}N=1000.\]
We again initialize $p(0)$ as the uniform distribution and $\psi_j(0)=-\frac{p_j(0)}{\pi_j}$ and the warm-start is not employed in this experiment. In this example, we use the following damping parameter inspired by the Nesterov's accelerated gradient (NAG) method. We also set a lower bound on the damping parameter as suggested by \texttt{con-Fisher} method in \Cref{sub:damping}:
    \[
\gamma(t)=
\left\{\begin{aligned}
&0.5\qquad &t<3;\\
&\max\Big\{\frac{3}{t-2}, 0.6\Big\}\qquad &t\geqslant 3.
\end{aligned}\right.
    \]
With the above choice of parameters, neither restarts nor adaptive step sizes are required in this experiment. Thus we compare the dynamics driven by the relative Fisher information with those by the MH update. As shown in \Cref{fig:twoloop-performance}, the jump process by the \texttt{log-Fisher} method exhibits faster convergence for fixed number of iterations, and achieves higher accuracy than its MH counterpart, particularly once MH sampling reaches its accuracy limit $\mathcal{O}(\frac{1}{\sqrt{M}})$. Although the ODE integrator associated with \texttt{log-Fisher} reaches machine precision more slowly than the MH-based integrator, its convergence rate can potentially be accelerated by appropriately tuning the damping parameter. 

\begin{figure}[hp!tb]
     \centering
     \begin{subfigure}[b]{0.45\textwidth}
         \centering
         \includegraphics[width=\textwidth]{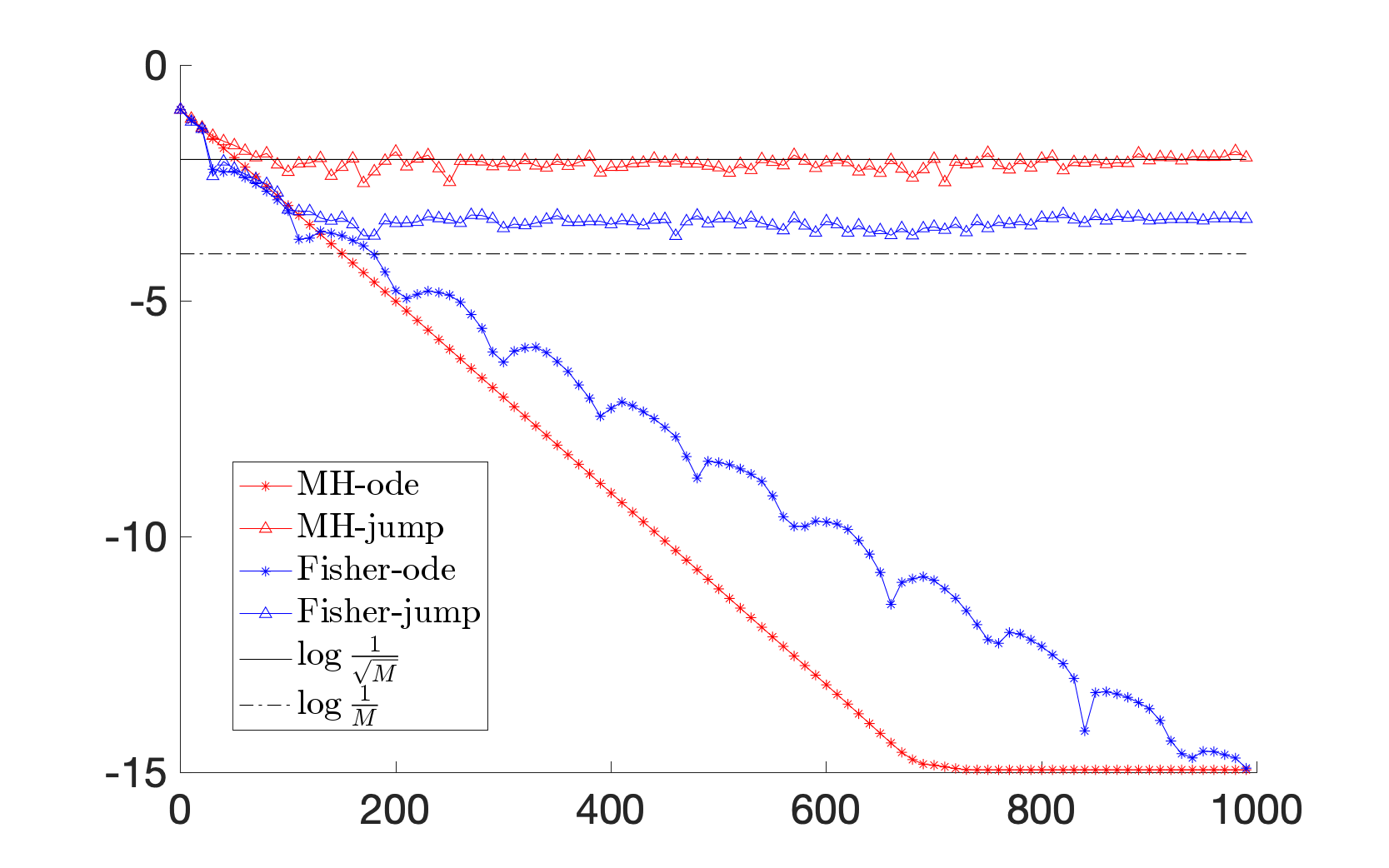}
     \end{subfigure}
     \hfill
     \begin{subfigure}[b]{0.45\textwidth}
         \centering
         \includegraphics[width=\textwidth]{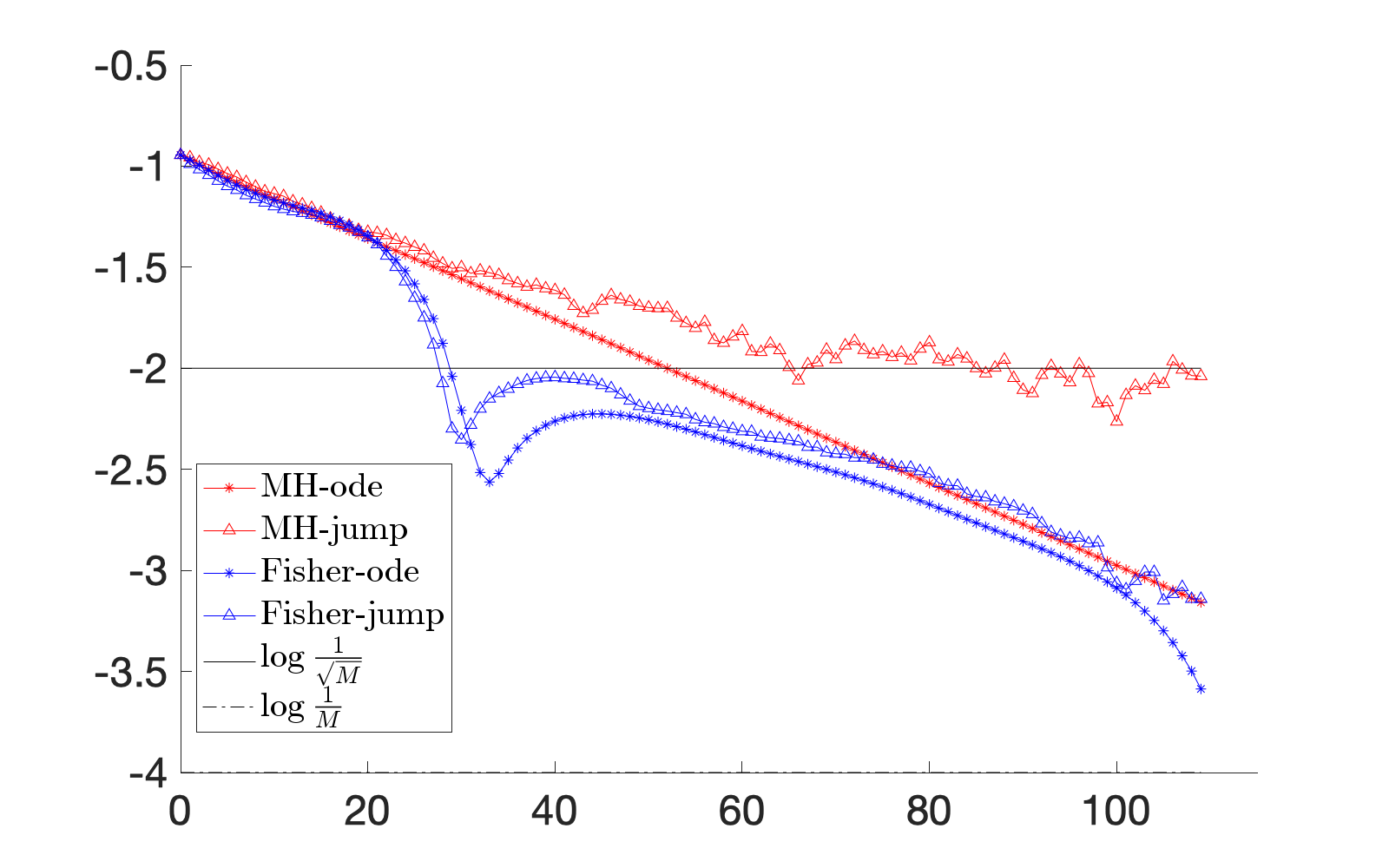}
     \end{subfigure}
     \caption{Accelerated sampling on a two-loop graph of node 8  (see \Cref{fig:2loop}) via \texttt{log-Fisher} method. x-axes are in iterations with the step size $\Delta t=0.1$. y-axes is on error $\log_{10}\norm{p(t)-\pi}_2$. The right figure zooms in the first 120 iterations of the left figure when the jump process of MH update reaches its accuracy limit $\mathcal{O}(\frac{1}{\sqrt{M}})$. The jump process via \texttt{log-Fisher} method exhibits faster convergence and higher accuracy.\label{fig:twoloop-performance}}
\end{figure}

In the second example, we compare the jump process on a hypercube graph (see \Cref{fig:hypercube}) of 64 nodes. The target distribution is designed to place significant mass at two maximally distant vertices in the hypercube, with the remaining nodes sharing uniformly small mass. Specifically, we set $\pi=\frac{1}{Z}[16,1,\ldots,1,\ldots,1,16]$. Note that the normalizing constant $Z$ is not required in \texttt{log-Fisher} method. The parameters are specified as follows:
    \[\textrm{sampling size~}M=10^4,\quad\textrm{ step size~}\Delta t= 0.01\,\quad\textrm{total iterations~}N=6\times 10^3.\]
Note that largest negative eigenvalue $\alpha_*\approx -0.0468$ of $Q^{\textrm{MH}}$. We choose the damping parameter inspired by NAG and \texttt{con-Fisher} method as:
    \[
    \gamma(t)=\max\left\{\frac{2\sqrt{-\alpha_*}}{t},0.17\right\}\qquad t\geqslant 1.
    \]
In the experiment by the \texttt{log-Fisher} method, we run a warm-start of 100 iterations (i.e., $t<1$) by the MH update to initialize, as described in \Cref{sub:restart}. Restart or adaptive step sizes are still not activated, thus the comparison between both methods run in the same time span. Though we do not yet know how to select the optimal damping parameter for the \texttt{log-Fisher} method, the above conventional choice showcases its acceleration especially when the MH achieves its accuracy limit. 

\begin{figure}[hp!tb]
     \centering
     \begin{subfigure}[b]{0.45\textwidth}
         \centering
         \includegraphics[width=\textwidth]{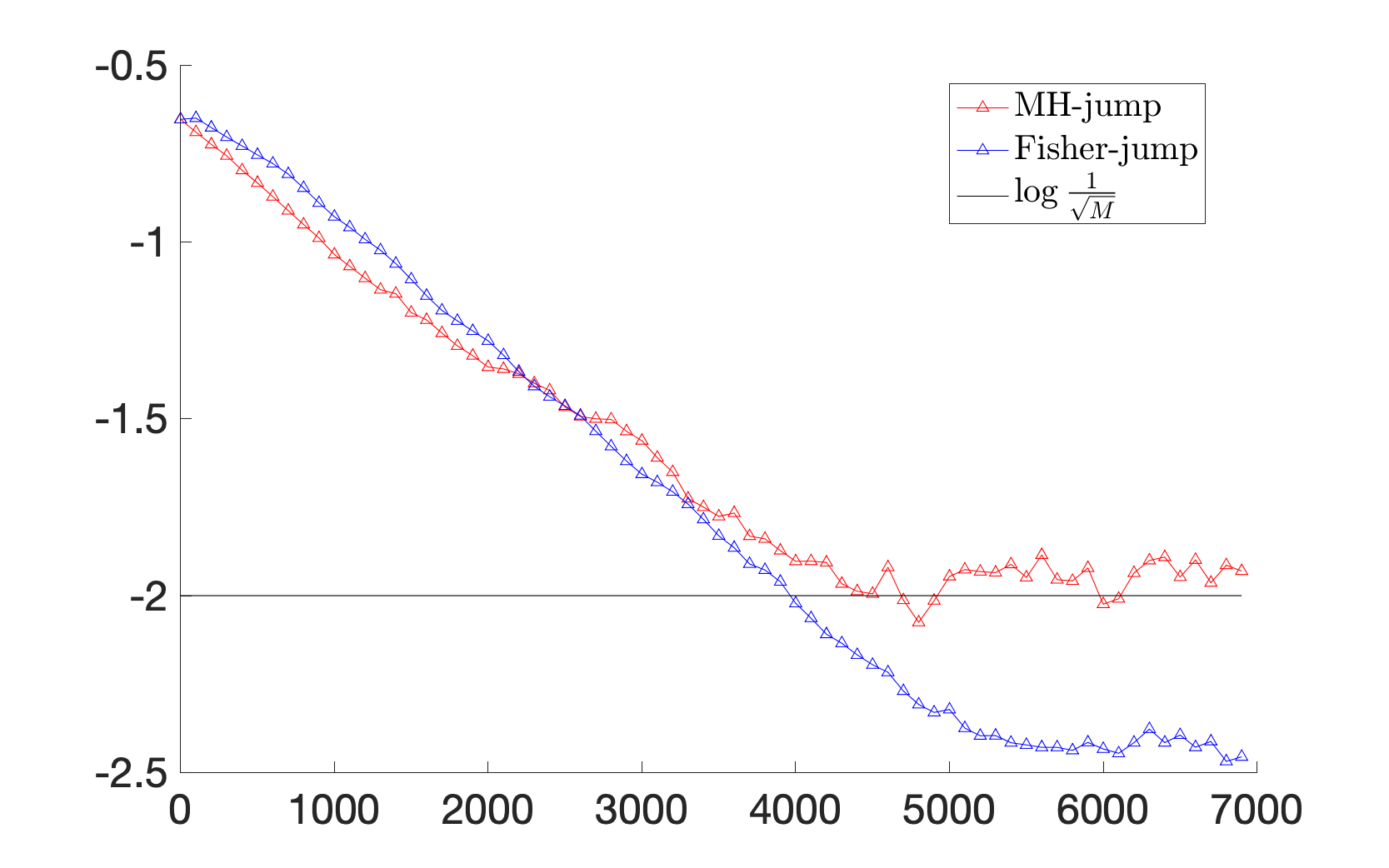}
     \end{subfigure}
     \hfill
     \begin{subfigure}[b]{0.45\textwidth}
         \centering
         \includegraphics[width=\textwidth]{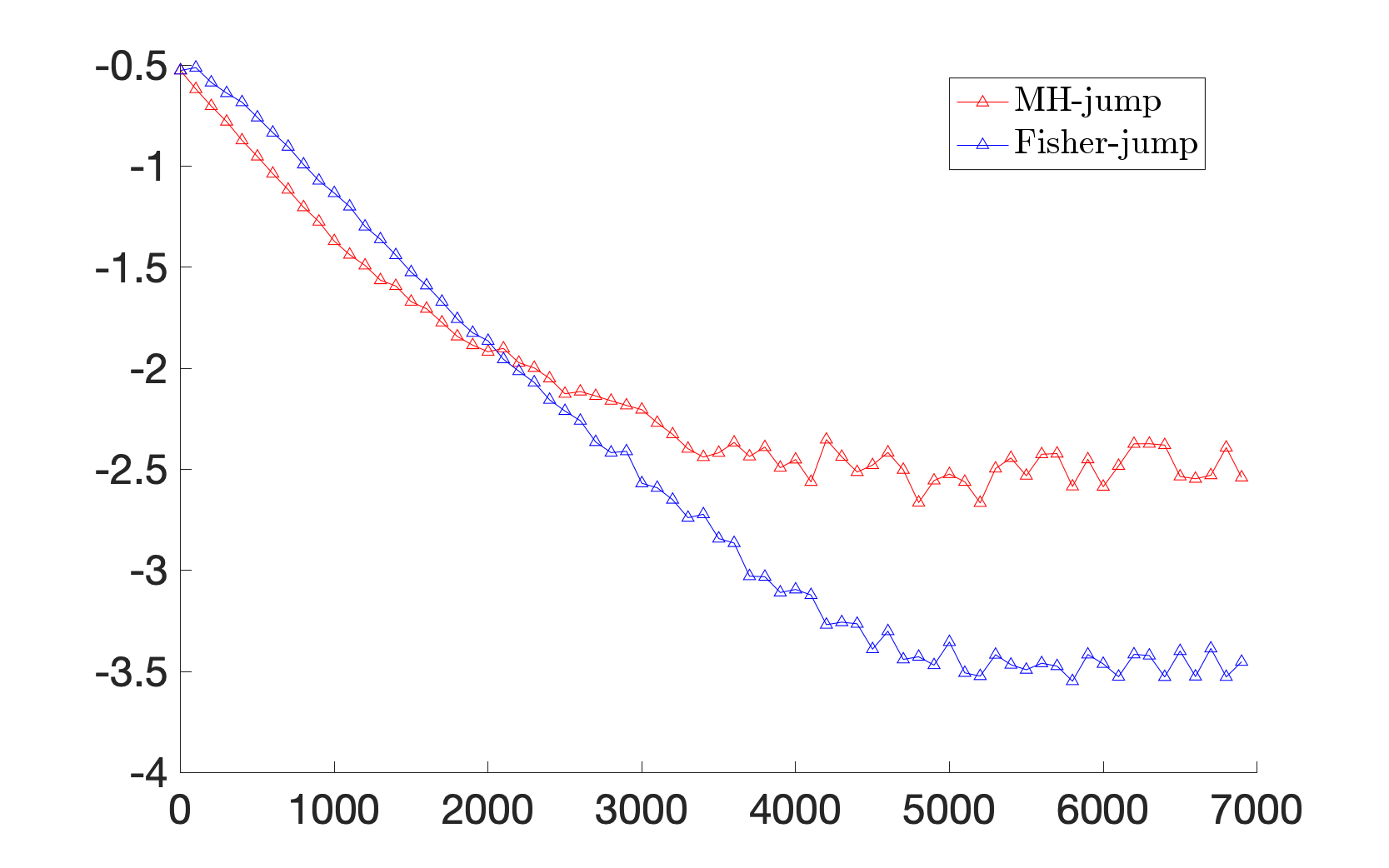}
     \end{subfigure}
     \caption{Sampling on a hypercube graph of 64 nodes (see \Cref{fig:hypercube}) via \texttt{log-Fisher} method. x-axes are in iterations with step size $\Delta t= 0.01$. The left figure shows the approximation error $\log_{10}\norm{p(t)-\pi}_2$ w.r.t the target distribution $\pi$. The right figure shows the approximation error $\log_{10}\abs{\sum_{i=1}^n p_i(t)\log \frac{p_i(t)}{Z\pi_i} - (-\log Z)}$ w.r.t the normalizing constant $Z$. \label{fig:hypercube-performance}}
\end{figure}

In the next example, we evaluate the performance of jump process on a graph of a large number of nodes. The target distribution is defined as a mixture of two Gaussian distributions on $[0,1]^2$, discretized a $25\times 25$ lattice graph shown in \Cref{fig:lattice}:
    \begin{equation}\label{eq:Gaussian-mixture}
    \pi(x)=\frac{1}{Z}\left[\exp\left(-10\norm{x-x_1}_2^2\right)+\exp\left(-40\norm{x-x_2}_2^2\right)\right],
    \end{equation}
where $x_1=(0.25,0.25)$ and $x_2=(0.75,0.75)$. This setting poses a challenging multimodal sampling problem. The common parameters are specified as follows: 
    \[\textrm{sampling size~}M=5\times 10^5,\quad\textrm{default step size~}\Delta t= 0.01\,\quad\textrm{total iterations~}N=1.5\times 10^5.\]
For the \texttt{log-Fisher} method, we first run a warm-start initialization by the MH update over the first 2999 iterations (i.e., $t< 30$, accounting for 2.00\% of the total iterations). After this initialization phase, we employ a constant damping parameter $\gamma(t)=2\sqrt{\lambda_*}=0.0065$ for simplicity of the test, where $\lambda_*$ is obtained from the Rayleigh quotient suggested by the \texttt{con-Fisher} method at the asymptotical limit. 

Unlike the previous experiments, both adaptive step sizes and restart mechanism are activated for the \texttt{log-Fisher} method, to prevent $p^{(k)}_{\text{i,jump}}$ from being zero. The adaptive step-size scheme results in an effective time span of approximately $[0, 1449.9]$, corresponding to 96.66\% of the MH method's duration. The restart mechanism is activated whenever $p^{(k)}_{\text{i,jump}}=0$, in which case the damping effect is disabled in the next step. During the entire jump process, the restart mechanism is activated $1243$ times (approximately $0.85\%$ of the total iterations). The frequency of restarts per $1000$ iterations ranges from $0$ to $14$. 

\begin{figure}[hp!tb]
     \centering
     \begin{subfigure}[b]{0.45\textwidth}
         \centering
         \includegraphics[width=\textwidth]{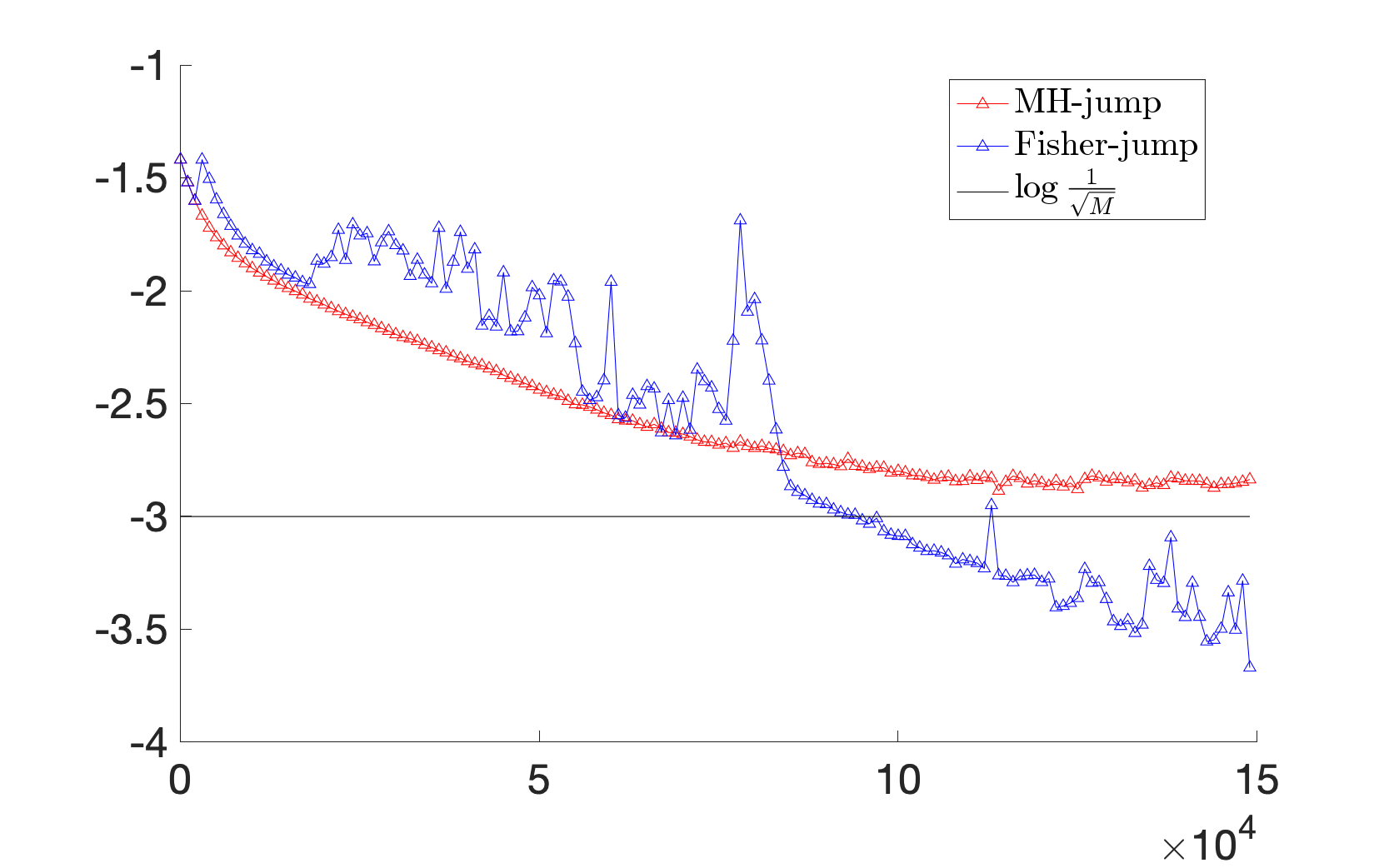}
     \end{subfigure}
     \hfill
     \begin{subfigure}[b]{0.45\textwidth}
         \centering
         \includegraphics[width=\textwidth]{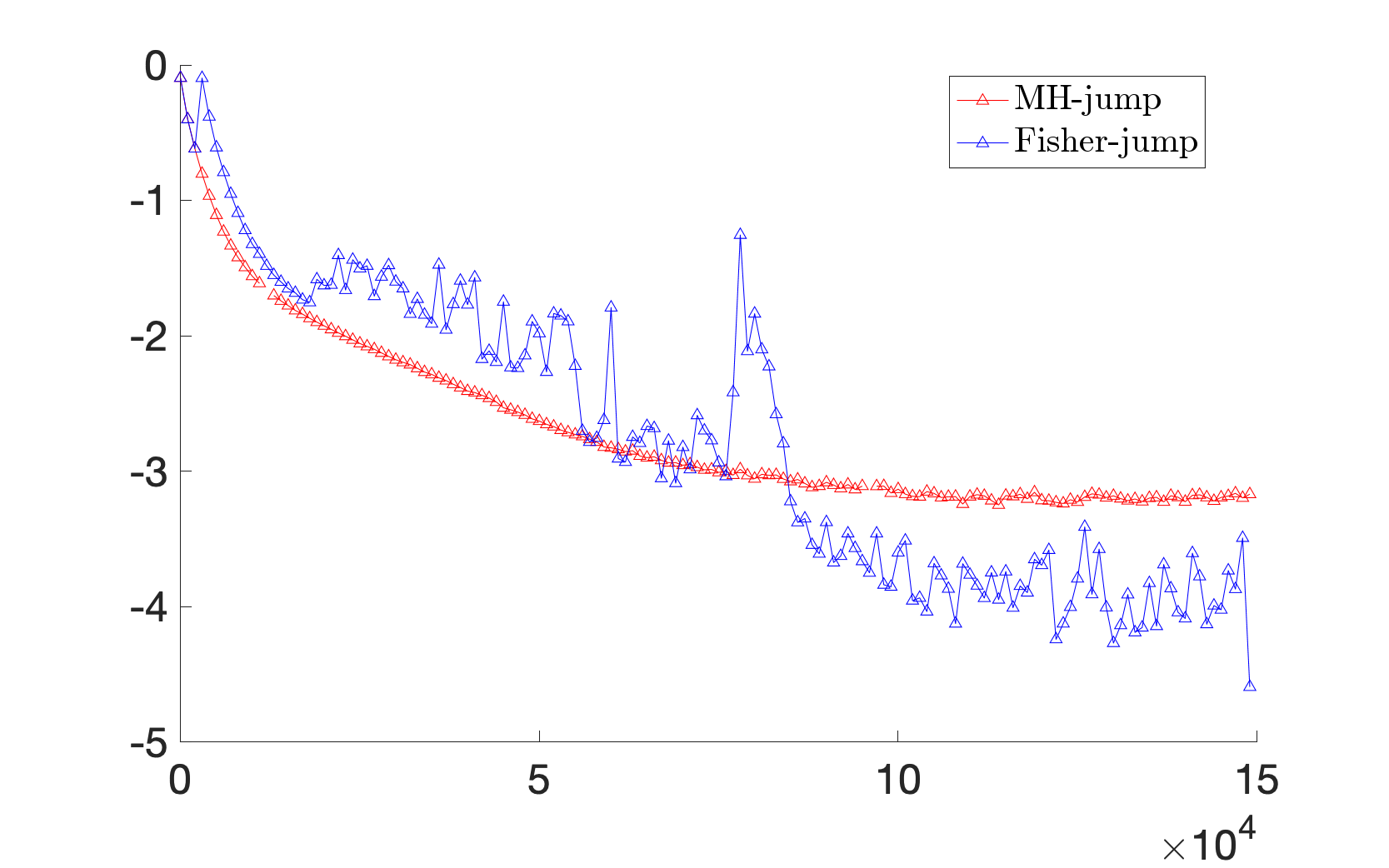}
     \end{subfigure}
     \caption{Sampling on a $25\times 25$ lattice graph (see \Cref{fig:lattice}) via \texttt{log-Fisher} method for the target distribution as a mixture of two Gaussian distributions. x-axes are in iterations. The left figure shows the approximation error $\log_{10}\norm{p(t)-\pi}_2$ w.r.t the target distribution $\pi$. The right figure shows the approximation error $\log_{10}\abs{\sum_{i=1}^n p_i(t)\log \frac{p_i(t)}{Z\pi_i } - (-\log Z)}$ w.r.t the normalizing constant $Z$. The jump process via \texttt{log-Fisher} achieves to a higher accuracy when that via MH is approaching to $\mathcal{O}(\frac{1}{\sqrt{M}})$.\label{fig:lattice-performance}}
\end{figure}

As shown in \Cref{fig:lattice-performance}, the \texttt{log-Fisher} method exhibits faster convergence in terms of iterations, and achieves smaller approximation errors to the target distribution and to the normalizing constant. \Cref{fig:lattice-gaussian} plots the evolution of the empirical distribution of the state variable.

\begin{figure}[hp!tb]
    \centering
    \includegraphics[width=0.85\linewidth]{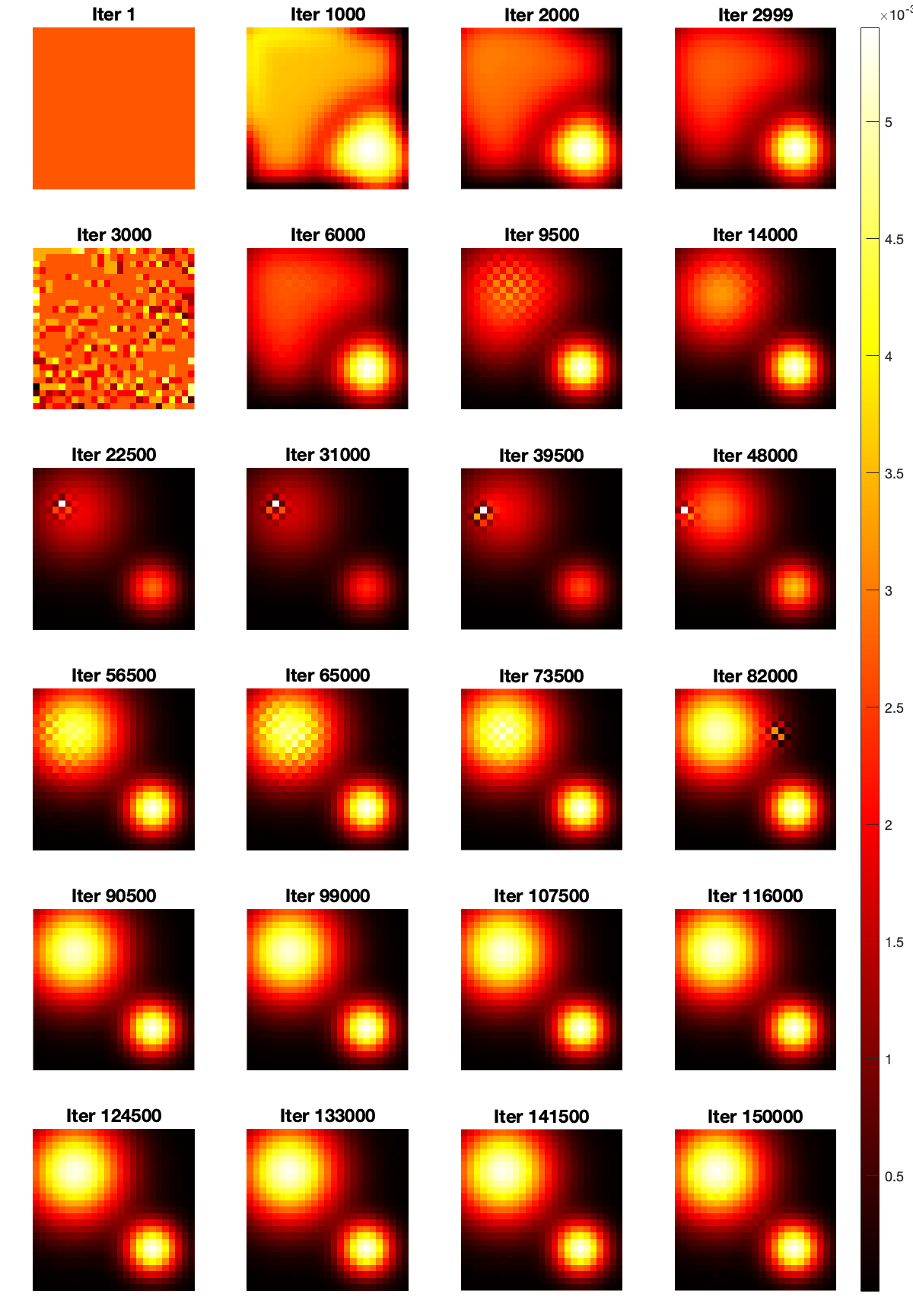}
    \caption{Sampling on a $25 \times 25$ lattice for a mixture of two Gaussian distributions supported on $[0,1]^2$, using the jump process induced by the \texttt{log-Fisher} method. The abrupt change between the 2999th and 3000th iterations suggests that a more refined choice of the damping parameter and/or the step size may be beneficial. Nonetheless, this simple parameter choice still demonstrates promising performance relative to the MH update. \label{fig:lattice-gaussian}}   
\end{figure}

\begin{figure}[hp!tb]
     \centering
     \begin{subfigure}{0.85\textwidth}
         \centering
         \includegraphics[width=\textwidth]{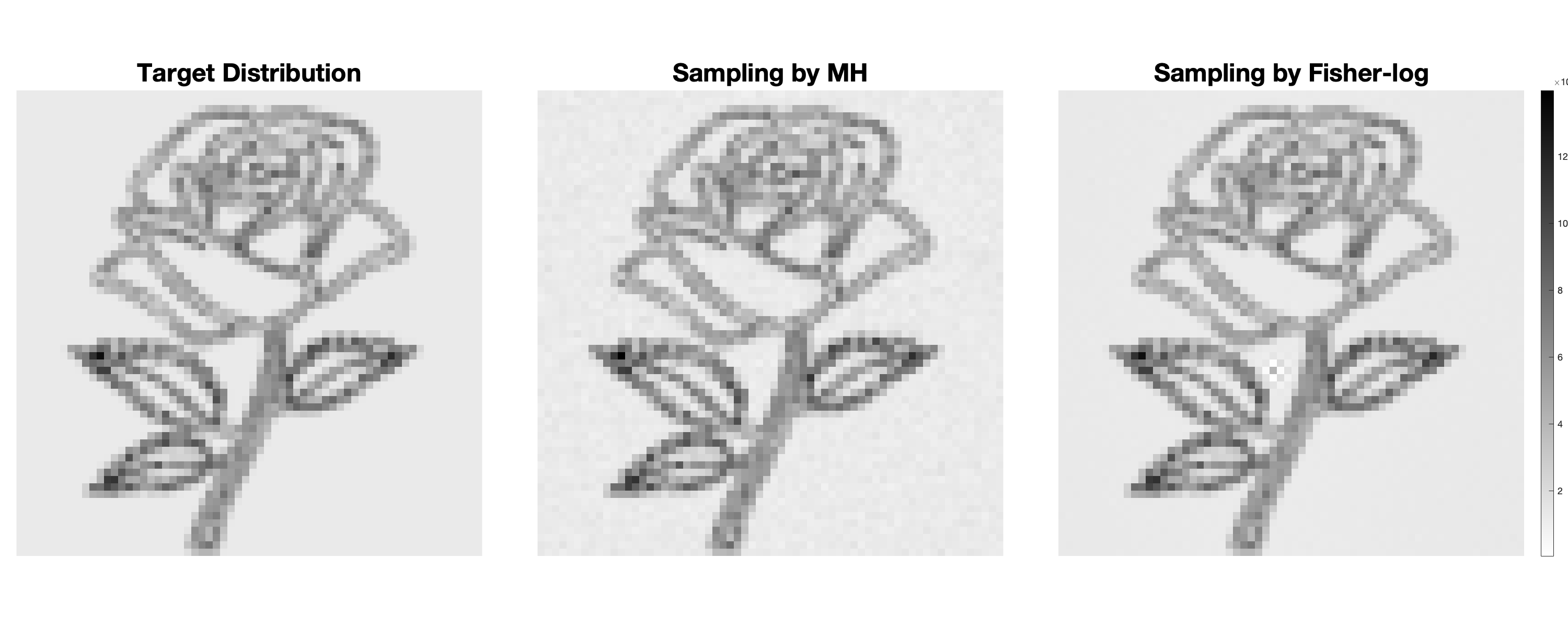}
     \end{subfigure}
     \hfill
     \begin{subfigure}[b]{0.45\textwidth}
         \centering
         \includegraphics[width=\textwidth]{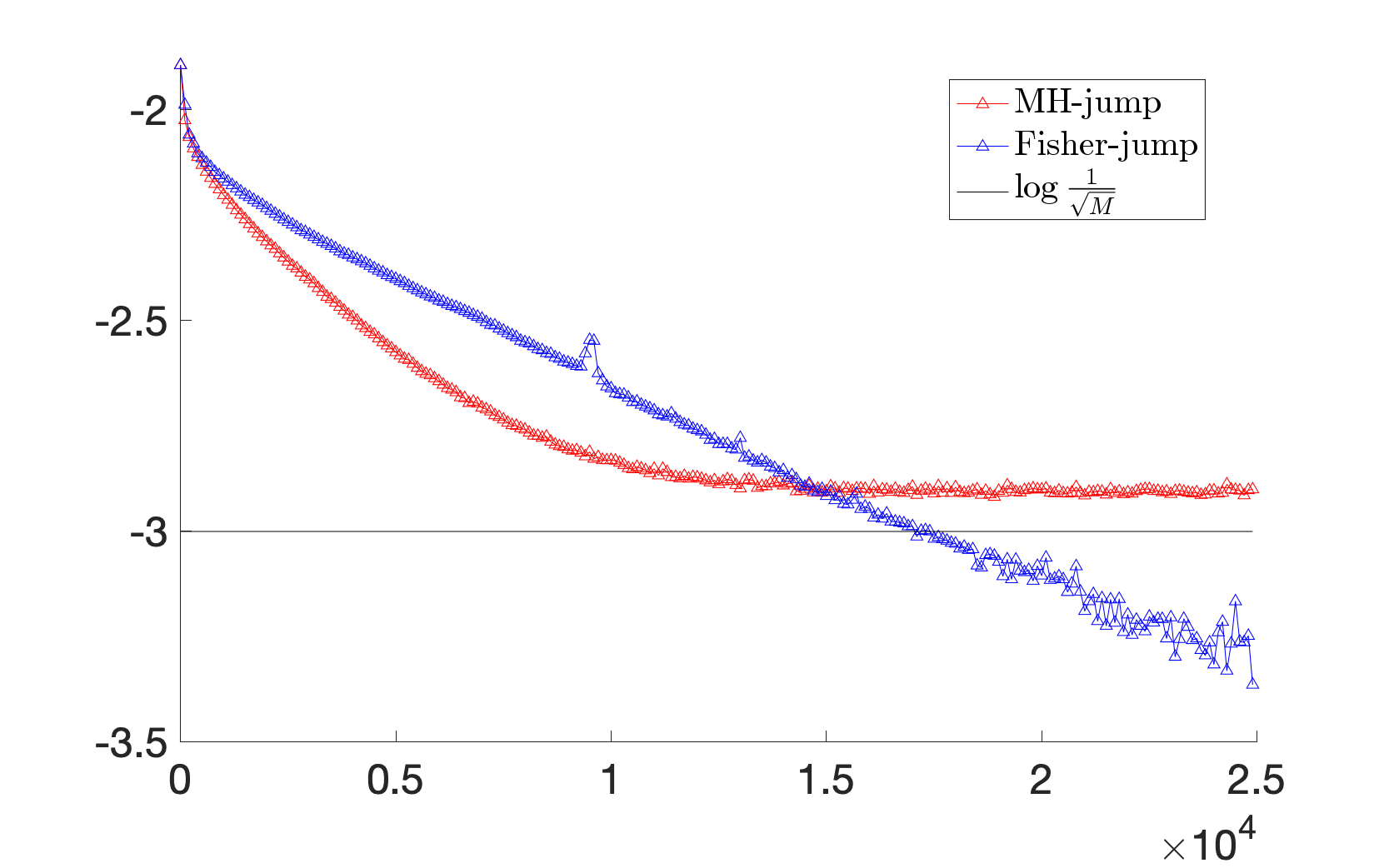}
     \end{subfigure}
     \hfill
     \begin{subfigure}[b]{0.45\textwidth}
         \centering
         \includegraphics[width=\textwidth]{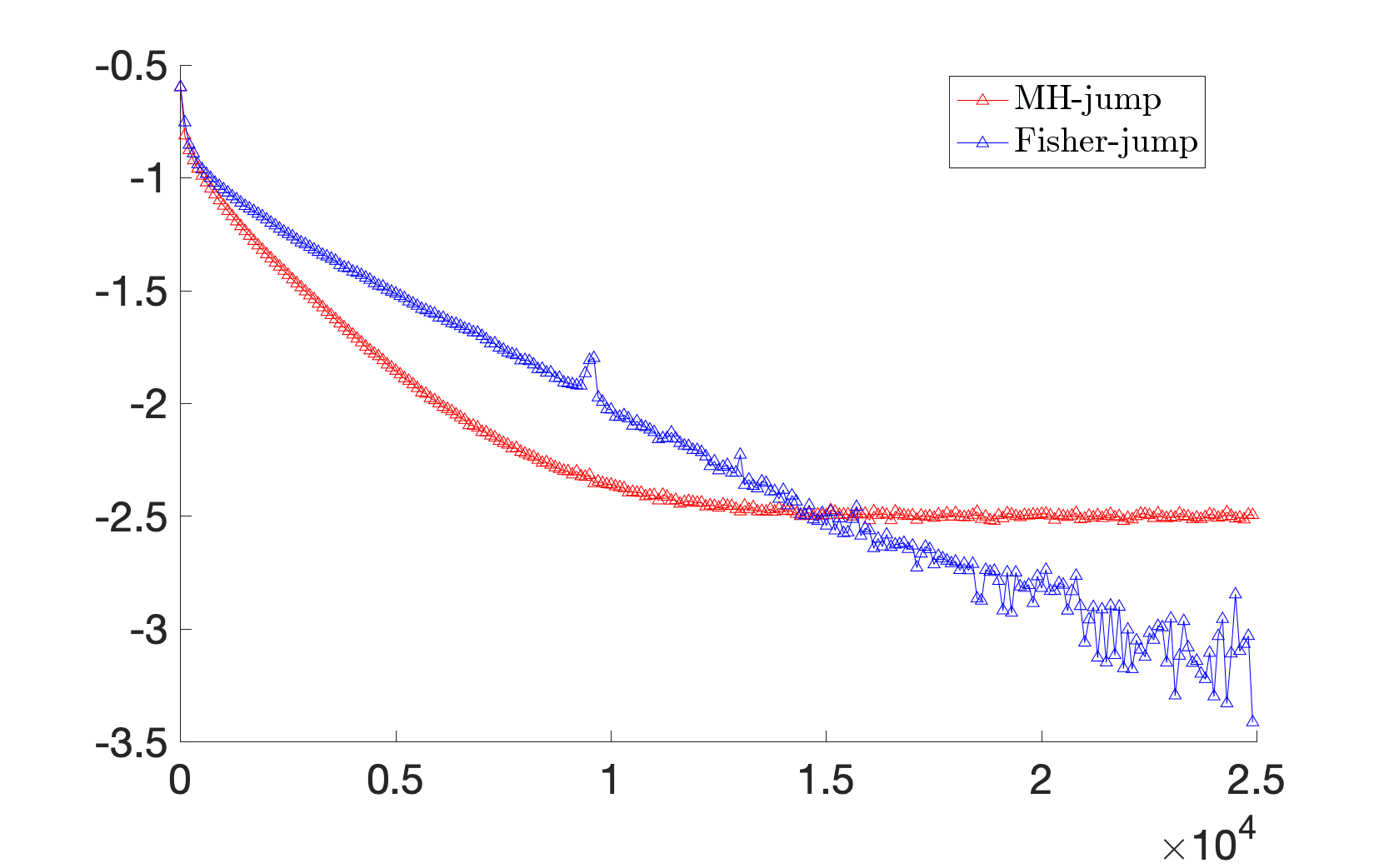}
     \end{subfigure}
     \hfill
     \begin{subfigure}{0.85\textwidth}
         \centering
         \includegraphics[width=\textwidth]{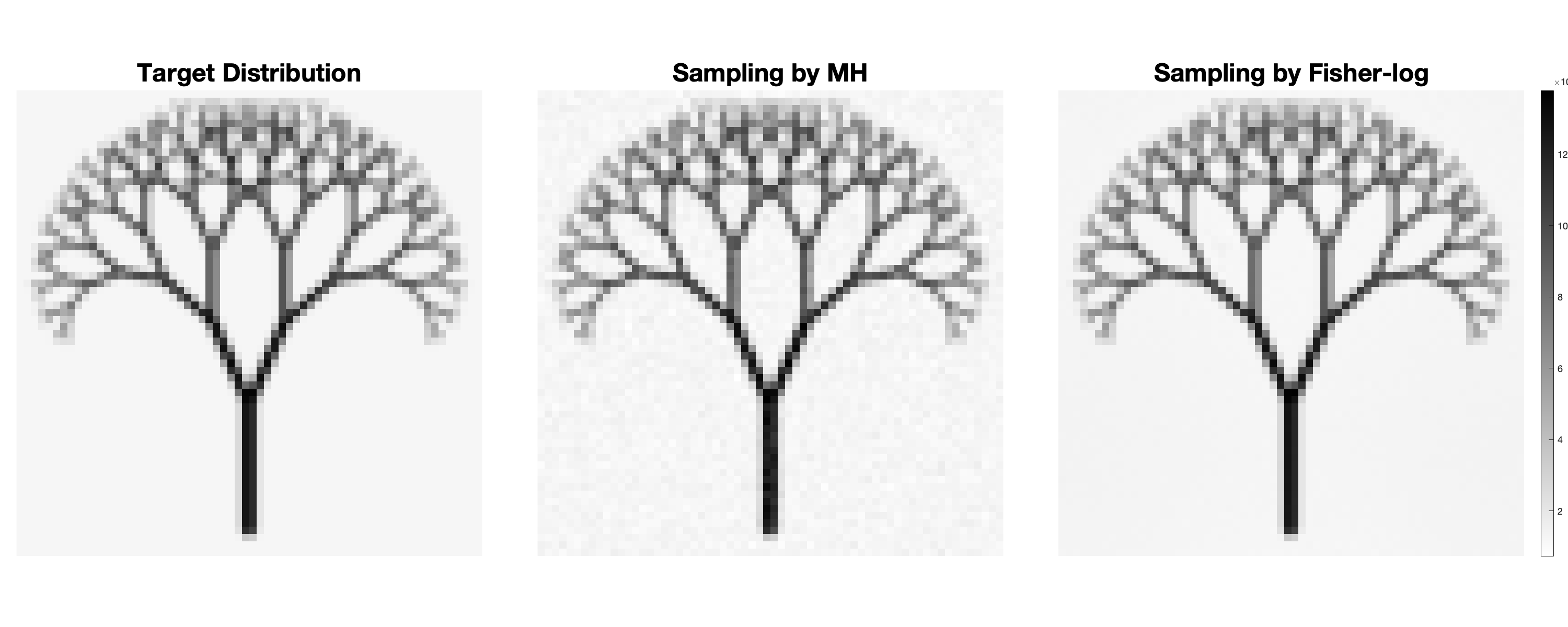}
     \end{subfigure}
     \hfill
    \begin{subfigure}[b]{0.45\textwidth}
         \centering
         \includegraphics[width=\textwidth]{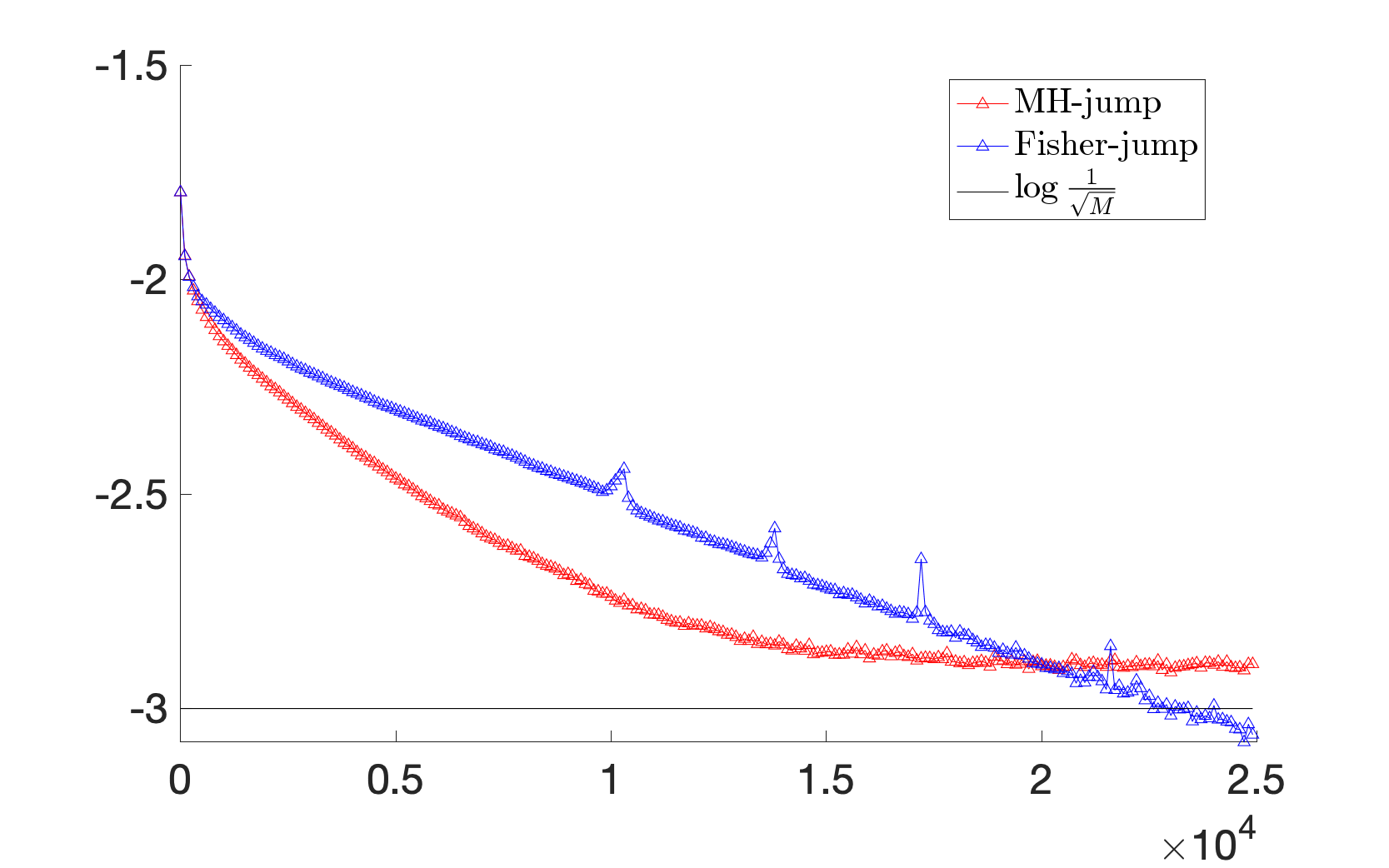}
     \end{subfigure}
     \hfill
     \begin{subfigure}[b]{0.45\textwidth}
         \centering
         \includegraphics[width=\textwidth]{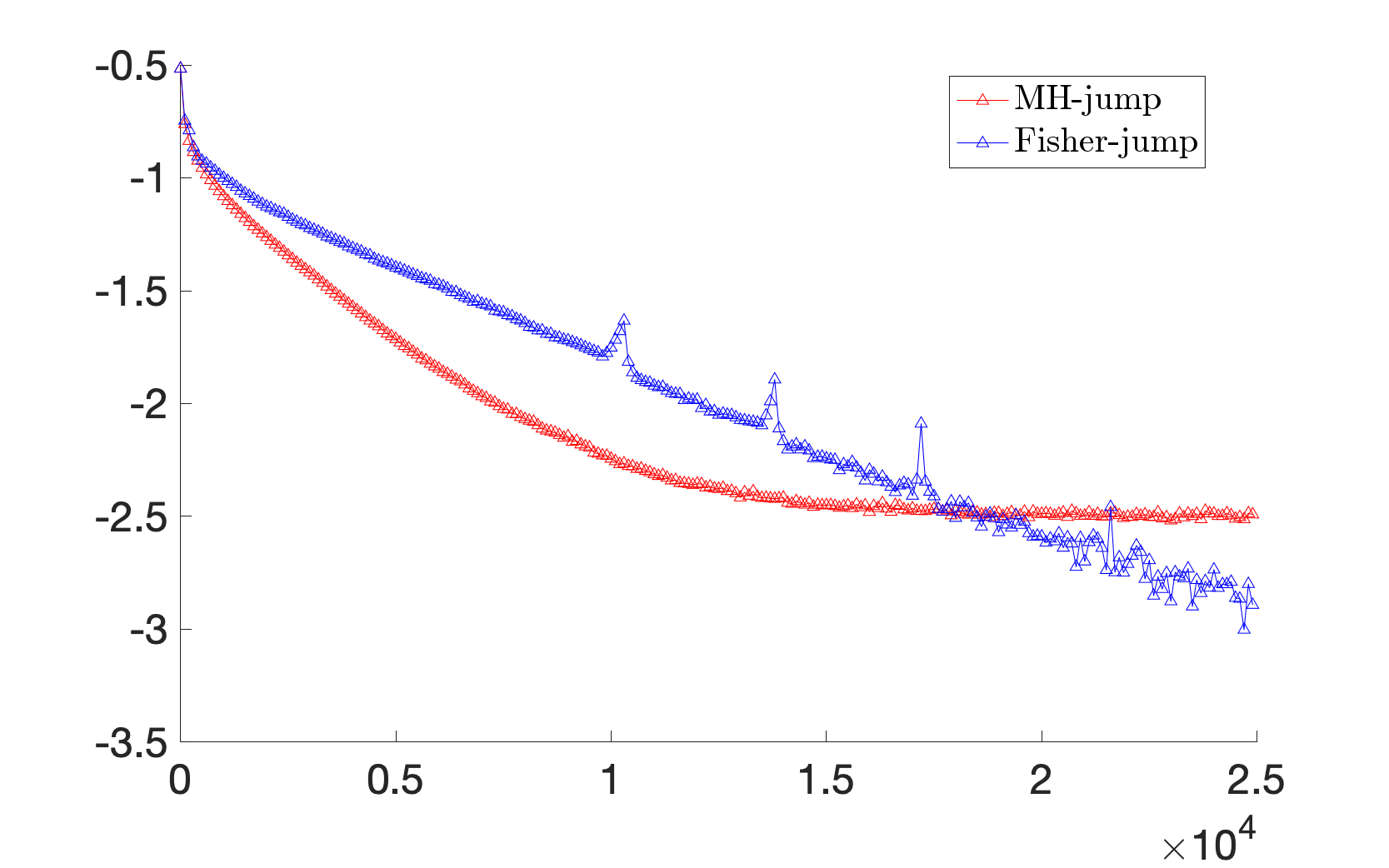}
     \end{subfigure}
     \caption{Sampling on a $64\times 64$ lattice graph for real datasets (`rose' and `fractal tree') via \texttt{log-Fisher} method. Samples are compared at the final iteration. Convergence plots follow the same convention as in previous experiments: x-axes are in iterations; the left plots show the approximation error $\log_{10}\norm{p(t)-\pi}_2$ w.r.t the target distribution $\pi$; the right plots show the approximation error $\log_{10}\abs{\sum_{i=1}^n p_i(t)\log \frac{p_i(t)}{Z\pi_i } - (-\log Z)}$ w.r.t the normalizing constant $Z$. \label{fig:pixel-performance}}
\end{figure}

{\color{blue} In the final example, we test the \texttt{log-Fisher} method on real datasets used in \cite{Xu2023Normalizing}. In the absence of prior graphical information, we simply read the input images (`rose' and `fractal tree') and discretize grayscale images onto a larger lattice of $64\times 64$ nodes. To ensure strict positivity of the target distribution, we add one tenth of the maximum pixel value uniformly to each node. For both target distributions and for experiments conducted using the \texttt{log-Fisher} and MH methods, parameters are specified as follows:
    \[\textrm{sampling size~}M=655360,\quad\textrm{default step size~}\Delta t= 0.1\,\quad\textrm{total iterations~}N=2.5\times 10^4.\]
For the \texttt{log-Fisher} method, we first run a warm-start initialization by the MH update over the first 9 iterations (i.e., $t<1$, or 0.04\% of default time span $[0,2500]$). After that, we employ a constant damping parameter $\gamma(t)=2\sqrt{\lambda_*}$, where the Rayleigh quotient yields $\lambda_*=1.7\times 10^{-6}$ for `rose' example and $\lambda_*=2.6\times 10^{-6}$ for `fractal tree'. For the `rose' example, the effective time span is $[0,1279.1]$ due to the adaptive step size and the restart is activated 1478 times (5.91\% of total iterations). For the `fractal tree', the effective time span is $[0, 1302.8]$ with 1437 times restart (5.75\% of total iterations). 

In \Cref{fig:pixel-performance}, we compare the empirical distributions obtained at the final iteration, together with the approximation errors for both the target distribution and the normalizing constant, under a fixed number of iterations. A notable difference between two methods on the final sampling results, as shown in \Cref{fig:pixel-performance}, is that the empirical distribution generated by our method is smoother. This effect is even more clear in \Cref{app:numerical}, where we provide additional numerical experiments on piecewise constant target distributions. Heuristic observations are also reported.

Notably, the effective time spans of the \texttt{Fisher-log} method for both datasets are approximately half that of MH, suggesting that the proposed damped Hamiltonian dynamics driven by the Fisher information can even outperform the MH method for a fixed wall clock time, as validated in \Cref{fig:wall-clock}. Therefore, A more detailed investigation of refined numerical schemes, including optimized step sizes election, is therefore warranted and left for future work. 
}

\begin{figure}[hp!tb]
     \centering
     \begin{subfigure}[b]{0.45\textwidth}
         \centering
         \includegraphics[width=\textwidth]{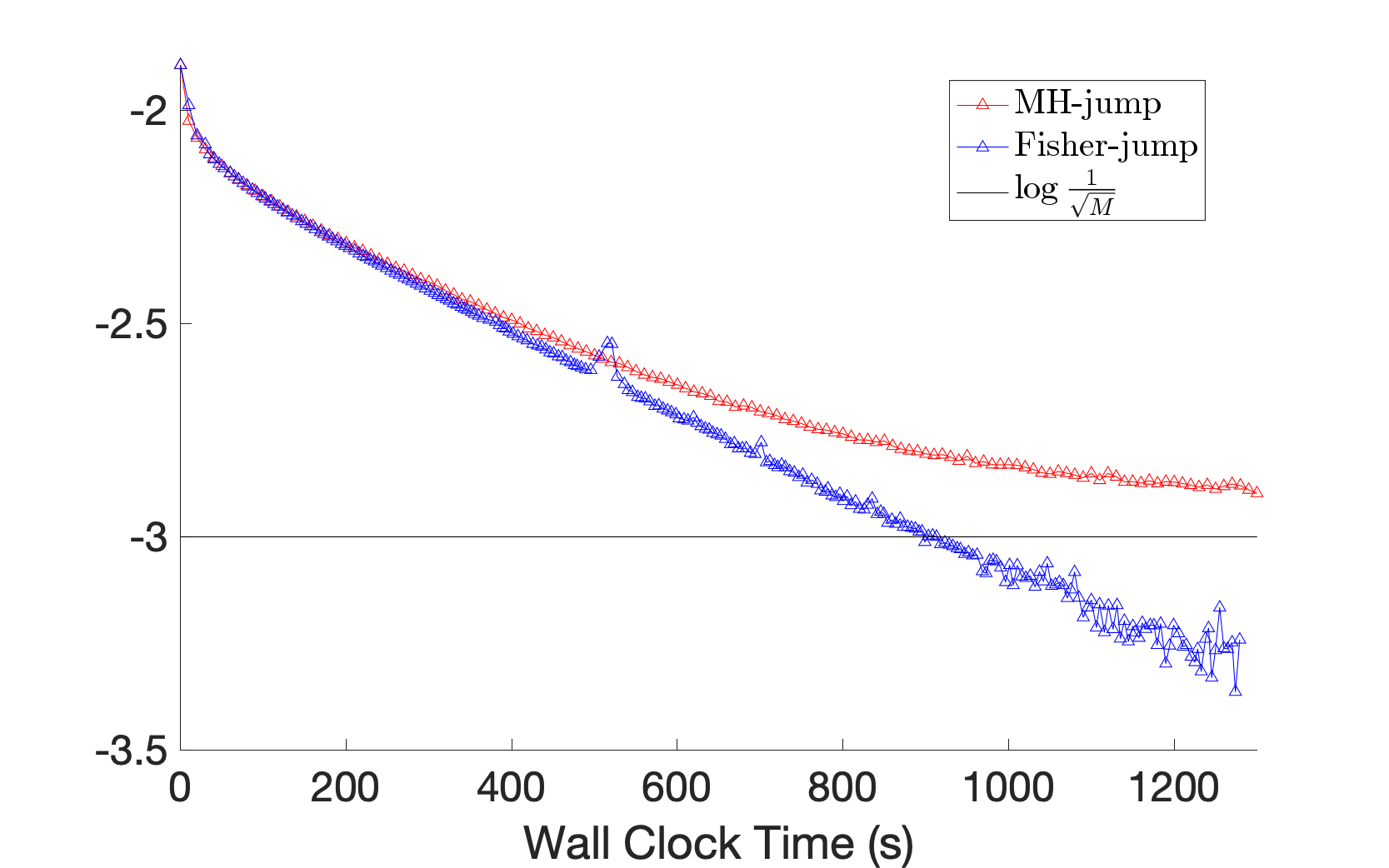}
     \end{subfigure}
     \hfill
     \begin{subfigure}[b]{0.45\textwidth}
         \centering
         \includegraphics[width=\textwidth]{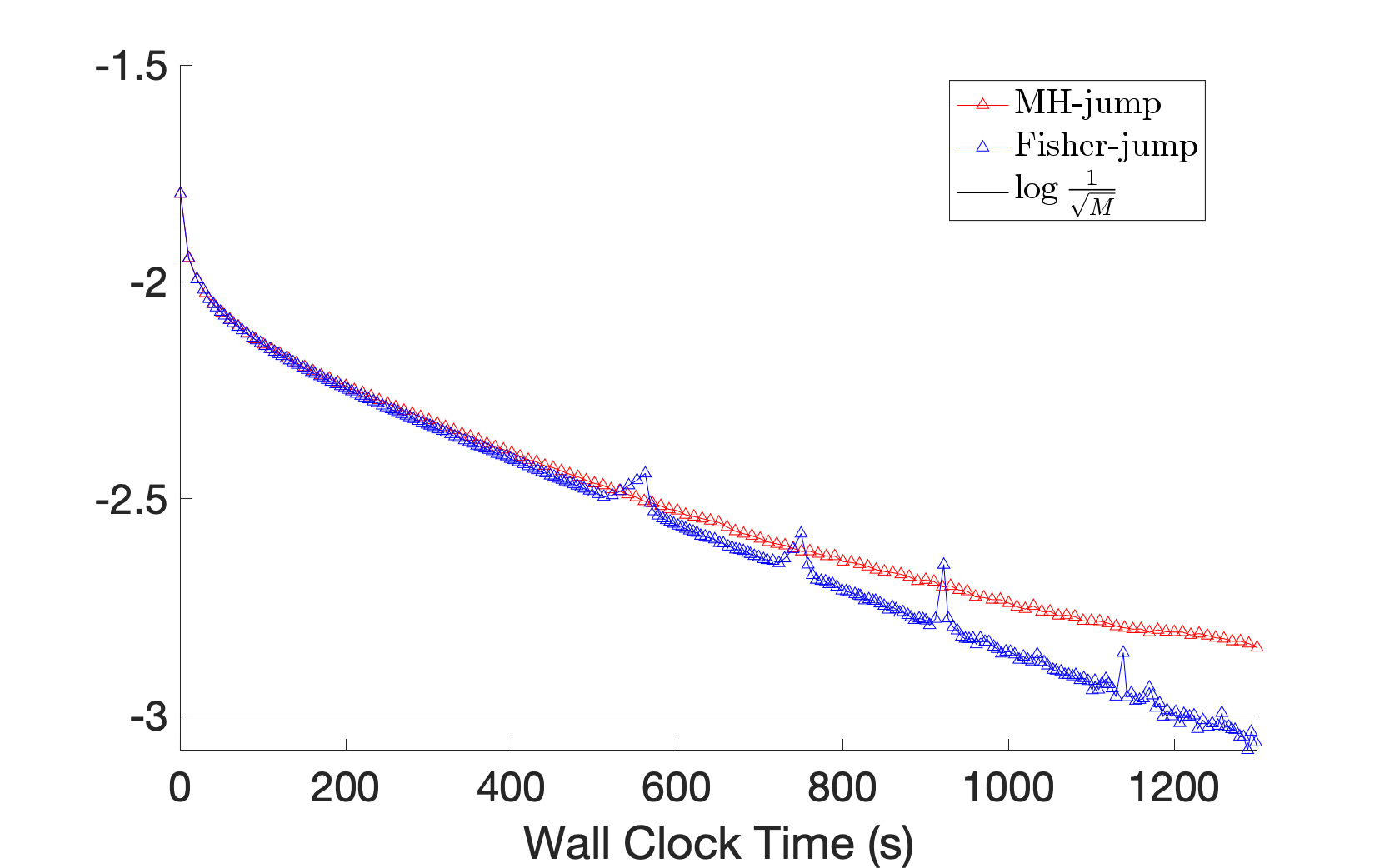}
     \end{subfigure}
     \caption{Results from  \Cref{fig:pixel-performance} are truncated at 1300 second of wall-clock time. The left is for the `rose' target and the right is for `fractal tree' target. x-axes are wall-clock time (seconds). y-axes are on the approximation error $\log_{10}\norm{p(t)-\pi}_2$ w.r.t the target distribution $\pi$.  \label{fig:wall-clock}}
\end{figure}

\section{Discussions}\label{sec:dis}
In this paper, we design accelerated Markov Chain Monte Carlo (aMCMC) algorithms to sample target distributions on a discrete domain, by incorporating Nesterov's acceleration into the Metropolis-Hastings (MH) algorithm. The MCMC sampling algorithm can be formulated as an optimization problem in the space of probabilities. We study the MH algorithm as a gradient descent method for the KL divergence in the discrete Wasserstein-$2$ space. Our algorithm utilizes moment-based accelerated gradient methods in discrete Wasserstein-$2$ spaces and then develops an interacting Markov particle process in discrete domains. Numerical examples on various graphs illustrate the effectiveness of the aMCMC algorithm.

In addition, the proposed accelerated sampling algorithm is related to the score functions on discrete domains. In \citet{lou2024discrete}, the discrete score function $\nabla \log p =\frac{\nabla p}{p}$ is approximated by $[\frac{p_j(t)-p_i(t)}{p_i(t)}]_{j\neq i}$. In this paper, we approximate the score function of a detailed-balanced MCMC algorithm by $\log\frac{p_j}{p_i}=\log p_j-\log p_i$ and the logarithmic mean \eqref{eq:log-mean}. In future work, we shall study these differences in the approximation of discrete score functions in terms of sampling complexities. 

{\color{blue} Several open problems arising from the proposed accelerated MCMC framework merit further investigation. First, it is of interest to analyze the choice of the damping parameter $\gamma(t)$ with a given potential function. The choice of an optimal damping parameter is closely related to the geometric calculations on the generalized Wasserstein-$2$ manifold \citep{li2025geometriccalculationsprobabilitymanifolds}. Second, a study of optimal step-size selection for the current numerical scheme remains open. Alternatively, a finer discretization scheme that allows large time step sizes and its convergence study invite further investigation. Third, it would be natural to incorporate additional acceleration techniques, such as irreversible jump samplers \citep{Ma2019Irreversible}, into the present framework, with particular emphasis on deriving single-chain approximations of the current swarm-based dynamics. Moreover, another interesting direction is to investigate the efficiency of score estimation in aMCMC. This estimation can be formulated as a variational framework based on time-reversible diffusion on discrete states. 

}

\acks{S. Liu, B. Zhou and X. Zuo are supported by AFOSR YIP award No. FA9550-23-1-0087. W. Li is supported by AFOSR YIP award No. FA9550-23-1-0087, NSF RTG: 2038080, NSF FRG: 2245097, and the McCausland Faculty
Fellowship at the University of South Carolina. Use was made of the computational facilities administered by the Center for Scientific Computing at the CNSI and MRL (an NSF MRSEC; DMR-2308708) and purchased through NSF CNS-1725797. All authors would like to thank the support of the AMS MRC conference: Ricci Curvatures of Graphs and Applications to Data Science in 2023. The discussion of this manuscript was initiated in the workshop. B. Zhou would like to extend appreciation to Andy Wan for a fruitful discussion. {\color{blue}We are also grateful to anonymous reviewers for the incisive comments and suggestions, which improved the quality of this work.}}

\appendix

\crefalias{section}{appendix}

\section{Graph Laplacian and detailed derivation of Wasserstein gradient}\label{app:derivation_wg}

\begin{lemma}\label{lem: grph_lap}
Suppose the Markov chain is irreducible and reversible with respect to some strictly positive distribution $\pi$. Assume $p\in\mathbb{P}(V)$ is strictly positive. Then, the graph Laplacian $\mathbb{K}(p)$ is positive semi-definite with $\mathrm{Ker}(\mathbb{K}(p)) = \mathrm{span}(\Id_n)$.
\end{lemma}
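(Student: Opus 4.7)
The plan is to establish the positive semi-definiteness by rewriting the quadratic form $\psi \mathbb{K}(p) \psi^\top$ as a weighted sum of squared differences, and then to characterize the kernel using irreducibility of $Q$ together with strict positivity of the weights $\omega_{ij}\theta_{ij}(p)$ on the edges of the underlying graph.

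First I would collect the two symmetries that drive the computation: reversibility of $Q$ with respect to $\pi$ gives $\omega_{ij}=\pi_i Q_{ij}=\pi_j Q_{ji}=\omega_{ji}$; and the mobility function $\theta$ defined in \eqref{eq:phi-average} is symmetric in its two arguments, so $\theta_{ij}(p)=\theta_{ji}(p)$. Moreover, strict convexity of $f$ (i.e.\ $f''>0$) makes $f'$ strictly monotone, so $\theta(x,y)=(x-y)/(f'(x)-f'(y))>0$ whenever $x,y>0$ with $x\neq y$, and $\theta(x,x)=1/f''(x)>0$. Since $p$ and $\pi$ are strictly positive, this gives $\theta_{ij}(p)>0$ for all $i,j$.

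Next I would compute the quadratic form directly from the definition \eqref{def:Onsager_response}. Using the symmetries above to pair off-diagonal entries, a standard manipulation yields
\[
\psi \mathbb{K}(p)\psi^\top \;=\; \tfrac{1}{2}\sum_{i=1}^n\sum_{j\neq i}\omega_{ij}\theta_{ij}(p)(\psi_i-\psi_j)^2,
\]
which is manifestly non-negative since every factor $\omega_{ij}\theta_{ij}(p)\geqslant 0$. This proves positive semi-definiteness. The inclusion $\mathrm{span}(\Id_n)\subseteq \mathrm{Ker}(\mathbb{K}(p))$ is immediate from the row-sum-zero property of $\mathbb{K}(p)$.

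For the reverse inclusion, suppose $\psi\mathbb{K}(p)\psi^\top=0$. Then each summand in the identity above must vanish, so $\psi_i=\psi_j$ whenever $\omega_{ij}>0$, i.e.\ whenever $(i,j)$ is an edge of the graph $G$ induced by $Q$. Irreducibility of $Q$ means that any two nodes are joined by a path in $G$, so iterating the equality $\psi_i=\psi_j$ along such a path forces $\psi$ to be constant, hence $\psi\in\mathrm{span}(\Id_n)$. The main (only) subtlety is verifying that strict positivity of $\theta_{ij}(p)$ actually holds on every edge, which is why the strict positivity assumption on $p$ (together with that on $\pi$) and the strict convexity of $f$ are essential; beyond this, everything is a standard graph-Laplacian argument.
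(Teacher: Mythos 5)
Your proposal is correct and follows essentially the same route as the paper's own proof in Appendix~\ref{append:derivation_wg}: rewrite the quadratic form as a weighted sum of squared differences with weights $\omega_{ij}\theta_{ij}(p)>0$, and use irreducibility (connectivity of the induced graph) to propagate the equality $\psi_i=\psi_j$ along paths and pin down the kernel as $\mathrm{span}(\Id_n)$. Your version is in fact slightly more explicit than the paper's about why $\theta_{ij}(p)>0$ (strict monotonicity of $f'$ from $f''>0$) and about the symmetry of $\omega$ and $\theta$ needed to pair the off-diagonal terms.
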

\begin{proof}
  Recall $\mathbb{K}(p)$ is the graph Laplacian of a weighted undirected graph with a weight matrix $(\omega_{ij} \theta_{ij}(p))$, for any $x=[x_1,\ldots,x_n]$, the quadratic form is
\[x\mathbb{K}(p)x^{\top} = -\sum_{1\leqslant i < j \leqslant n} \mathbb{K}_{ij}(p)(x_i-x_j)^2 = - \sum_{1 \leqslant i < j  \leqslant  n } \omega_{ij}\theta_{ij}(p)(x_i - x_j)^2 \leqslant 0.\]
The last inequality is due to $\omega_{ij}\geqslant 0$ and $\theta_{ij}(p) = \frac{\frac{p_i}{\pi_i} - \frac{p_j}{\pi_j}}{f'(\frac{p_i}{\pi_i}) - f'(\frac{p_j}{\pi_j})} > 0$. This verifies that $\mathbb{K}(p)$ is negative definite.

Furthermore, assume $y \in\mathrm{Ker}(\mathbb{K}(p))$, this leads to $y\mathbb{K}(p) y^\top = 0$, which further leads to $\mathbb{K}_{ij}(p)(y_i-y_j)^2 = 0$ for any $1 \leqslant i< j \leqslant n$. 

Recall that the Markov chain is reducible; for arbitrary $1\leqslant k < l \leqslant n$, we can find a path of nodes $k=i_1\to i_2\to \cdots \to i_m=l$ such that $\omega_{i_j\, i_{j+1}}>0$ for $j=1,\ldots,m-1$, which yields $\mathbb{K}_{i_j\, i_{j+1}}(p)>0$. This leads to $y_{k}=y_{i_1}=\cdots=y_{i_m}=y_l$. This verifies that $y\in\mathrm{span}(\Id_n)$, thus $\mathrm{Ker}(\mathbb{K}(p))\subset \mathrm{span}(\Id_n)$. It is clear that $\mathrm{span}(\Id_n)\subset \mathrm{Ker}(\mathbb{K}(p))$ since $\mathbb{K}(p)$ is row-sum-zero and symmetric. Thus we have proved $\mathrm{Ker}(\mathbb{K}(p)) = \mathrm{span}(\Id_n)$.
\end{proof}

\noindent
\textbf{Derivation of $\mathrm{grad}\mathrm{D}_f(p\|\pi)$:}
Denote the tangent space of $\mathbb{P}(V)$ at $p$ by $\mathrm{T}_p\mathbb{P}(V)$, and denote the Wasserstein gradient on $(\mathbb{P}(V), g_W)$ by $\mathrm{grad} \mathrm{D}_f(p\|\pi)\in\mathrm{T}_p \mathbb{P}(V)=\Id_n^\perp$. We consider arbitrary curve $\{p(t)\}$ passing through $p$ at $t=0$, by 
the definition of the gradient operator on Riemannian manifold \citep[see][Chapter~2]{Lee2018Introduction}
    \begin{equation*}
      \frac{\mathrm{d} }{\mathrm{d} t}\mathrm{D}_f(p_t\|\pi)\Big|_{t=0} = g_W(\mathrm{grad} \mathrm{D}_f(p\|\pi), \ \dot p(0)).
    \end{equation*}
Denote $\nabla_p \mathrm{D}_f(p\|\pi)$ as the flat gradient of $f$-divergence with respect to $p$, for any $\dot p(0)\in \Id_n^\perp$, we have
\begin{equation*}
     \nabla_p \mathrm{D}_f(p\|\pi) (\dot p(0))^{\top} = \mathrm{grad} \mathrm{D}_f(p\|\pi)\mathbb{K}(p)^\dagger (\dot p(0) )^\top.
\end{equation*}

Let $\Pi:\mathbb{R}^n \rightarrow \mathbb{R}^n$ be the orthogonal projection onto $\mathrm{T}_p \mathbb{P}(V) = \Id_n^\perp$, the above equation yields
\[ \Pi( \mathrm{grad} \mathrm{D}_f(p\|\pi)\mathbb{K}(p)^\dagger ) = \Pi(\nabla_p \mathrm{D}_f(p\|\pi)). \]
Since $\mathbb{K}(p)^\dagger$ is the Moore-Penrose inverse of $\mathbb{K}(p)$, we have $\Pi = \mathbb{K}(p)^\dagger \mathbb{K}(p) = \mathbb{K}(p)\mathbb{K}(p)^\dagger$. The above equation then leads to
\[ ( \mathrm{grad} \mathrm{D}_f(p\|\pi) \mathbb{K}(p)^\dagger ) \mathbb{K}(p) \mathbb{K}(p)^\dagger = \nabla_p \mathrm{D}_f(p\|\pi) \mathbb{K}(p)\mathbb{K}(p)^\dagger. \]
The left-hand side equals $\mathrm{grad} \mathrm{D}_f(p\|\pi)\mathbb{K}(p)^\dagger ,$ as $\mathrm{grad} \mathrm{D}_f(p\|\pi)\in\mathrm{T}_p \mathbb{P}(V) = \Id_n^\perp.$ Then,
\[ \left[\mathrm{grad}\mathrm{D}_f(p\|\pi) - \nabla_p \mathrm{D}_f(p\|\pi) \mathbb{K}(p)  \right] \mathbb{K}(p)^\dagger = 0. \]
Furthermore, as
\[ \mathrm{Ker}(\mathbb{K}(p)^\dagger) = \mathrm{Ker}(\mathbb{K}(p)) = \mathrm{span}(\Id_n), \]
we have
\[  \mathrm{grad}\mathrm{D}_f(p\|\pi) - \nabla_p \mathrm{D}_f(p\|\pi)\mathbb{K}(p)  \in \mathrm{span}(\Id_n). \]
On the other hand, notice that $\mathrm{Ran}(\mathbb{K}(p)) = \mathrm{Ker}(\mathbb{K}(p)^\top)^\perp = \mathrm{Ker}(\mathbb{K}(p))^\perp = \Id_n^\perp$, we have $ \nabla_p \mathrm{D}_f(p\|\pi) \mathbb{K}(p) \in \Id_n^\perp$. As a result,
\[ \mathrm{grad}\mathrm{D}_f(p\|\pi) - \nabla_p \mathrm{D}_f(p\|\pi) \mathbb{K}(p)  \in \mathrm{span}(\Id_n)\cap\Id_n^\perp = 0. \]

\section{Eigenvalues and eigenvectors of $Q$-matrix}\label{app:Q-matrix}

While the majority of the MCMC literature emphasizes the transition probability matrix $P$, rather than the transition rate matrix $Q$, the two are closely related. For completeness, we include a self-contained discussion.

Recall $\omega=(\omega_{ij})=\textrm{diag}(\pi)Q$ is a symmetric and row-zero-sum matrix.

\begin{lemma}[Sylvester's law of inertia]
Two symmetric square matrices $A,B$ of the same size have the same number of positive, negative and zero eigenvalues if and only if they are congruent, that is, $B=QAQ^T$ for some non-singular matrix $Q$.
\end{lemma}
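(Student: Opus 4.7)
The plan is to prove both directions via the quadratic form induced by a symmetric matrix, reducing each side to a canonical inertia form via spectral decomposition and a positive/negative subspace characterization.

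For the ``only if'' direction, I would use the spectral theorem for real symmetric matrices to diagonalize $A$ as $A = U \Lambda U^\top$ where $U$ is orthogonal and $\Lambda$ is diagonal. Ordering the eigenvalues so that the positive ones appear first, then the negative, then the zero eigenvalues, I can then conjugate by the diagonal matrix $D = \mathrm{diag}(|\lambda_i|^{-1/2})$ on the nonzero blocks (with $1$ on the zero block) to transform $A$ by a nonsingular congruence into the canonical sign matrix $J_A = \mathrm{diag}(I_p, -I_q, 0_r)$, where $(p,q,r)$ is the inertia of $A$. Applying the same reduction to $B$ yields $J_B$. If $A$ and $B$ share the same inertia, then $J_A = J_B$, and chaining the two congruences produces an explicit nonsingular $Q$ with $B = Q A Q^\top$.

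For the ``if'' direction, assume $B = Q A Q^\top$ with $Q$ nonsingular. The key observation is that the number of positive eigenvalues of a symmetric matrix $A$ admits a basis-free characterization:
\[
p(A) = \max\bigl\{\dim V : V \subset \mathbb{R}^n \text{ subspace}, \ x^\top A x > 0 \text{ for all } x \in V \setminus \{0\}\bigr\},
\]
and analogously for $q(A)$ using $x^\top A x < 0$, with $r(A) = n - p(A) - q(A)$. For $B = Q A Q^\top$, the substitution $x = Q^\top y$ gives $y^\top B y = (Q^\top y)^\top A (Q^\top y)$; since $y \mapsto Q^\top y$ is a linear bijection of $\mathbb{R}^n$, it sends any maximal positive-definite subspace for $B$ to one for $A$ of the same dimension, and vice versa. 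Thus $p(A) = p(B)$ and, by the symmetric argument, $q(A) = q(B)$, whence $r(A) = r(B)$.

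The main subtlety is establishing the variational characterization of $p(A)$ used above. I would verify it by diagonalizing $A$ orthogonally: the span of eigenvectors with positive eigenvalues exhibits a subspace of dimension $p(A)$ on which the form is positive, and conversely any subspace of dimension exceeding $p(A)$ must, by a dimension count, intersect the span of the nonpositive eigenvectors nontrivially, producing a vector $x \neq 0$ with $x^\top A x \leqslant 0$. This step is the main technical content; once it is in place, both directions of the law follow cleanly from the invariance of maximal positive/negative/null subspaces under nonsingular linear substitutions.
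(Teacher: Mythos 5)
Your proof is correct and complete: the reduction of each matrix to the canonical form $\mathrm{diag}(I_p,-I_q,0_r)$ by a nonsingular congruence handles the ``only if'' direction, and the variational characterization of $p(A)$ as the maximal dimension of a subspace on which the quadratic form is positive definite (together with the dimension-count argument for its converse inequality) correctly shows that congruence preserves the inertia. Note that the paper states Sylvester's law of inertia as a classical fact without proof (it is only used to transfer the sign pattern of the eigenvalues of $\omega$ to $Q=\mathrm{diag}(\pi)^{-1}\omega$), so there is no in-paper argument to compare against; yours is the standard textbook proof and fills that gap correctly.
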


\begin{lemma}
Given a nondegenerate target distribution $\pi$ and an irreducible transition rate matrix $Q$ that satisfy the detailed balance condition. Now define $\omega=\mathrm{diag}(\pi) Q$. Then
\begin{enumerate}[label=\alph*)]
    \item $\omega$ is a negative semi-definite matrix. Furthermore, $\mathrm{Ker}(\omega) = \mathrm{span}(\Id_n)$, and $\omega$ is negative definite on $\Id_n^\perp$.
    \item The eigenvalues to $Q$ consist of 0 with algebraic multiplicity 1, along with $(n-1)$ negative eigenvalues.
\end{enumerate}
\end{lemma}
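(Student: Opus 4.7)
The plan is to handle parts (a) and (b) in sequence, with (a) feeding directly into (b) via Sylvester's law of inertia stated just above.

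For part (a), I would first observe that $\omega$ is symmetric: detailed balance gives $\omega_{ij} = \pi_i Q_{ij} = \pi_j Q_{ji} = \omega_{ji}$. Next, $\omega$ inherits the row-sum-zero property from $Q$ because each row of $\omega$ is just $\pi_i$ times the corresponding row of $Q$. From here, the matrix $-\omega$ is exactly the graph Laplacian of the undirected weighted graph with edge weights $(\omega_{ij})_{i\neq j} \geqslant 0$. Then the standard calculation
\begin{equation*}
x\,\omega\, x^\top \;=\; -\frac{1}{2}\sum_{i\neq j}\omega_{ij}(x_i - x_j)^2 \;\leqslant\; 0
\end{equation*}
shows $\omega$ is negative semi-definite. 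For the kernel, I would mirror the argument already used in the proof of Lemma~\ref{lem: grph_lap}: $x\,\omega\,x^\top = 0$ forces $x_i = x_j$ whenever $\omega_{ij}>0$, and irreducibility of $Q$ means the associated graph is connected, so $x$ must be constant. Hence $\mathrm{Ker}(\omega) = \mathrm{span}(\Id_n)$, which immediately yields negative definiteness on $\Id_n^\perp$.

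For part (b), the key move is to relate the spectrum of $Q$ (which is not symmetric in general) to that of $\omega$ (which is). Since $\pi > 0$, the matrix $D := \mathrm{diag}(\pi)$ is positive definite and admits a symmetric positive-definite square root $D^{1/2}$. Define
\begin{equation*}
S \;:=\; D^{-1/2}\,\omega\, D^{-1/2}.
\end{equation*}
Then $S$ is symmetric, and $Q = D^{-1}\omega = D^{-1/2} S\, D^{1/2}$ is similar to $S$, so $Q$ and $S$ have the same eigenvalues with the same algebraic multiplicities. On the other hand, $\omega = D^{1/2} S\, D^{1/2}$ is congruent to $S$, so by Sylvester's law of inertia $S$ has the same signature as $\omega$: one zero eigenvalue and $n-1$ negative eigenvalues, by part (a). Transferring this via similarity to $Q$ completes the proof.

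I do not expect any serious obstacle here, since both parts are essentially standard once the structural identity $\omega = D Q$ is exploited. The only mildly delicate point is the kernel characterization in (a), which relies on irreducibility to connect any two states through a path of positive weights; I would state this carefully but it is exactly analogous to the argument already present in Lemma~\ref{lem: grph_lap} and requires no new ingredients.
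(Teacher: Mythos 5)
Your proposal is correct and follows essentially the same route as the paper: part (a) via the graph-Laplacian quadratic form and a connectivity argument for the kernel, and part (b) via the factorization $Q = D^{-1/2}\bigl(D^{-1/2}\omega D^{-1/2}\bigr)D^{1/2}$, using similarity to transfer the spectrum and Sylvester's law of inertia to transfer the signature. No gaps.
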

\begin{proof}
For part a), follow the same proof technique in \Cref{lem: grph_lap}, we can show that for arbitrary $x=[x_1,\ldots,x_n]$, $x\omega x^\top \leqslant 0$, as well as $\mathrm{Ker}(\omega) = \mathrm{span}(\Id_n)$.

Combining these facts, together with $\omega$ being symmetric matrix, it is straightforward to prove that $\omega$ is negative definite on $\Id_n^\perp$. Since $0$ is apparently an eigenvalue to $\omega$ with the eigenvector $\Id_n$, Recall all eigenvalues to a real symmetric matrix are real, thus $\omega$ persists $(n-1)$ real negative eigenvalues.  

For part b), note that 
    \[Q=\mathrm{diag}(\pi)^{-1} \omega=\sqrt{\mathrm{diag}(\pi)}^{-1} (\sqrt{\mathrm{diag}(\pi)}^{-1} \omega \sqrt{\mathrm{diag}(\pi)}^{-1})\sqrt{\mathrm{diag}(\pi)},\]
Since $\omega$ and $(\sqrt{\mathrm{diag}(\pi)}^{-1} \omega \sqrt{\mathrm{diag}(\pi)}^{-1})$ are congruent, by the Sylverster's law of inertia, they have the same number of negative eigenvalues and zero eigenvalue. $(\sqrt{\mathrm{diag}(\pi)}^{-1} \omega \sqrt{\mathrm{diag}(\pi)}^{-1})$ and $Q$ are similar, thus they have the same eigenvalues with the same algebraic multiplicity. In summary, $Q$ has the same number of negative eigenvalues and zero eigenvalue as $\omega$.
\end{proof}

Alternatively, given an irreducible transition rate matrix $Q$, we may define $P=I_n+Q\Delta t$ for small enough $\Delta t$ such that $P$ is an irreducible, aperiodic transition probability matrix. Once applying Perron-Frobenius Theorem onto $P$ and combining the detailed balance, we obtain the same conclusion about $Q$.

\begin{lemma}[Perron-Frobenius Theorem]
Any irreducible, aperiodic transition matrix $P$ has an eigenvalue $\lambda_0=1$ with algebraic multiplicity 1 as the Perron-Frobenius eigenvalue, and all other eigenvalues $\lambda_i$ satisfy $\abs{\lambda_i}<1$.
\end{lemma}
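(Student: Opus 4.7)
The plan is to reduce the claim to the stronger classical Perron–Frobenius theorem for entrywise positive matrices. The decisive observation is that irreducibility together with aperiodicity implies the existence of an integer $m \geqslant 1$ such that $P^m$ has all entries strictly positive. Once this is in hand, I can apply the strong Perron–Frobenius theorem to $P^m$ and then pull the spectral conclusions back to $P$ via the relation between the eigenvalues (with algebraic multiplicities) of $P$ and those of $P^m$.

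First, I would note that $P \Id_n^\top = \Id_n^\top$ because $P$ is row-stochastic, so $\lambda_0 = 1$ is an eigenvalue. Next, since $\|P\|_\infty = 1$, the spectral radius satisfies $\rho(P) \leqslant 1$, giving $|\lambda_i| \leqslant 1$ for every eigenvalue. The existence of $m$ with $P^m > 0$ entrywise would come from a two-part argument: irreducibility ensures that for every ordered pair $(i,j)$ some power has positive $(i,j)$-entry, while aperiodicity (gcd of return times equal to $1$) combined with a numerical semigroup argument (Schur / Chicken McNugget) shows that each diagonal entry of $P^k$ is positive for all sufficiently large $k$; chaining these via irreducibility yields a single $m$ working for all $(i,j)$ simultaneously. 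I would then invoke the strong Perron–Frobenius theorem on $P^m > 0$: its spectral radius is a simple eigenvalue, strictly dominant in modulus, with a strictly positive eigenvector. Because $P^m$ is itself stochastic, this dominant simple eigenvalue equals $1$.

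Finally, I would transfer the conclusion back to $P$. The eigenvalues of $P^m$ are exactly the $m$-th powers $\lambda_i^m$, and the algebraic multiplicity of any eigenvalue $\mu$ of $P^m$ is the sum of the algebraic multiplicities over all eigenvalues $\lambda$ of $P$ satisfying $\lambda^m = \mu$ (this follows by putting $P$ in Jordan form and observing that a Jordan block of $P$ at $\lambda$ produces a block with eigenvalue $\lambda^m$ of the same size in $P^m$). Applying this to $\mu = 1$: since $1$ has algebraic multiplicity exactly $1$ in $P^m$, there is a unique $m$-th root of unity that is an eigenvalue of $P$, and it occurs with multiplicity $1$. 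Since $1$ itself is such a root, we conclude that $1$ is an eigenvalue of $P$ with algebraic multiplicity $1$, and every other eigenvalue satisfies $|\lambda_i| < 1$.

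The main obstacle is the combinatorial step of producing the uniform $m$ with $P^m > 0$; aperiodicity yields positivity of each diagonal only for sufficiently large exponents and uniformity across all entries requires some bookkeeping. A secondary subtlety is the Jordan-block argument for transferring algebraic multiplicities from $P^m$ back to $P$; it must be carried out with care to rule out the possibility that a nontrivial Jordan block of $P$ at $\lambda = 1$ collapses in $P^m$, which is avoided by the observation that powers of a Jordan block preserve its size so long as the eigenvalue is nonzero.
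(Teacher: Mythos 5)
Your proof is correct, but note that the paper itself offers no proof of this lemma: it is stated in Appendix~B as the classical Perron--Frobenius theorem, cited for completeness so that the spectral facts about the transition rate matrix $Q$ can be recovered from $P = I_n + Q\,\Delta t$. There is therefore no in-paper argument to compare against. Your route is the standard one: irreducibility plus aperiodicity gives primitivity (some $P^m$ entrywise positive, via the numerical-semigroup argument on return times chained through irreducibility), the strong Perron--Frobenius theorem applies to $P^m$, and the spectral conclusions descend to $P$ through the Jordan-form correspondence $\lambda \mapsto \lambda^m$. All the steps check out, including the two places where care is genuinely needed: the uniformity of the exponent $m$ across all pairs $(i,j)$, and the fact that a Jordan block at a nonzero eigenvalue does not split or shrink under powers, so algebraic multiplicities of $\mu$ in $P^m$ really are the sums of multiplicities over $\{\lambda : \lambda^m = \mu\}$. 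The one step you state slightly elliptically is the final exclusion of eigenvalues on the unit circle other than $1$: one should say explicitly that if $|\lambda| = 1$ then $\lambda^m$ is an eigenvalue of $P^m$ of modulus $1$, hence equals $1$ by strict dominance in $P^m$, which places $\lambda$ among the $m$-th roots of unity and lets your multiplicity count finish the job. With that sentence added the argument is complete.
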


{\color{blue}\section{Derivation of our models}\label{app:derivation}

\subsection{General Jump Process \eqref{eq:Kolmogorov-Ham}}
To construct a sampling algorithm (a ``jump process'' of particles on $V$), we rewrite the equation of the state variables $p$ in the form of the forward master equation \eqref{eq:Kolmogorov}, while leaving the momentum variables $\psi$ as a parameter in the transition rate matrix $Q$. By splitting the positive part $(\psi_i-\psi_j)_+:=\max\{\psi_i-\psi_j, 0\}$ and the negative part $(\psi_i-\psi_j)_{-}:=-\min\{\psi_i-\psi_j, 0\}$, recall $(\omega_{ij}), (\theta_{ij}(p))$ are symmetric, we have
    \[
    \begin{aligned}
    &\sum_{j\neq i} \omega_{ij}\theta_{ij}(p) (\psi_i - \psi_j)=\sum_{j\neq i} \omega_{ij}\theta_{ij}(p) \left[(\psi_i-\psi_j)_+ - (\psi_i-\psi_j)_{-}\right]\\
    =&-\sum_{j\neq i} \omega_{ij}\theta_{ij}(p) (\psi_i-\psi_j)_{-} + \sum_{j\neq i}\omega_{ji} \theta_{ji}(p) (\psi_i-\psi_j)_{+}\\
    =&-\left[\sum_{j\neq i} \frac{\omega_{ij}\theta_{ij}(p) (\psi_i-\psi_j)_{-} }{p_i}\right]p_i + \displaystyle\sum_{j\neq i} \frac{\omega_{ji}\theta_{ji}(p) (\psi_i-\psi_j)_{+}}{p_j} p_j\\
    =&p \begin{bmatrix}
        \dfrac{\omega_{1i} \theta_{1i}(p) (\psi_i-\psi_1)_+}{p_1} & \cdots & -\left[\displaystyle\sum_{j\neq i} \dfrac{\omega_{ij}\theta_{ij}(p)  (\psi_i-\psi_j)_{-} }{p_i}\right] & \cdots & \dfrac{\omega_{ni}\theta_{ni}(p)  (\psi_i-\psi_n)_+}{p_n}
    \end{bmatrix}^\top
    \end{aligned}
    \]
This contributes the $i$-th column of the matrix $\bar{Q}^r_{\psi}$ in \eqref{eq:AMH-jump}. One can easily verify that $\bar{Q}^r_{\psi}$ is row-sum-zero. On $i$-th row, if $\psi_i\geqslant \psi_j$, then $(\psi_j - \psi_i)_+ = 0$ while $(\psi_i - \psi_j)_-=0$; if $\psi_i\leqslant \psi_j$, then $(\psi_j-\psi_i)_+ = \psi_j-\psi_i$ while $(\psi_i - \psi_j)_- = \psi_j-\psi_i$. In either cases, the cancellation results in row-sum-zero.

\subsection{\texttt{Chi-squared} method}

$\theta_{ij}=1$ and $\mathcal{U}(p)=\D_{f}(p \| \pi)=\frac{1}{2}\sum_{i=1}^n \frac{(p_i-\pi_i)^2}{\pi_i}$ are obtained from choosing $f(x)=\frac{1}{2}\abs{x-1}^2$ in \eqref{eq:phi-average} and \eqref{eq:phi-divergence}. \eqref{eq:chi-1} is obtained from \eqref{eq:NODE} by the direct plug-in.

From \eqref{eq:chi-1}, we take additional derivative w.r.t $t$,
    \begin{align*}
    \frac{\mathrm{d}^2p_i}{\mathrm{d}t^2}&=\frac{\mathrm{d}\psi_i}{\mathrm{d}t}\sum_{j\neq i}\omega_{ij} -\sum_{j\neq i}\frac{\mathrm{d}\psi_j}{\mathrm{d}t}\omega_{ji}\\
    &=-\gamma(t)\left(\psi_i \sum_{j\neq i}\omega_{ij}-\sum_{j\neq i}\psi_j\omega_{ji}\right)-\frac{p_i}{\pi_i}\sum_{j\neq i}\pi_i Q_{ij} + \sum_{j\neq i}\frac{p_j}{\pi_j}\pi_j Q_{ji} + \sum_{j\neq i}\omega_{ij} - \sum_{j\neq i}\omega_{ji}\\
    &=-\gamma(t) \frac{\mathrm{d}p_i}{\mathrm{d}t} - \sum_{j\neq i} (p_i Q_{ij} - p_j Q_{ji}),
    \end{align*}
where we use $\omega_{ij}=\omega_{ji}$, and obtain the Telegrapher's equation \eqref{eq:telegrapher}.

Denote $\Delta t$ by the step size, one possible time-discretization for the Telegrapher's equation \eqref{eq:telegrapher} reads
    \[
    \frac{p_i^{(k+2)}-2 p_i^{(k+1)}+ p_i^{(k)}}{(\Delta t)^2} + \gamma(t) \frac{p_i^{(k+1)}-p_i^{(k)}}{\Delta  t}=\sum_{j\neq i}(p_j^{(k)} Q_{ji}- p_i^{(k)} Q_{ij}).
    \]
Thus we obtain 
    \[
    p_i^{(k+2)}=p^{(k)}_i + (2-\gamma(t) \Delta t)(p_i^{(k+1)}-p_i^{(k)})+ (\Delta t)^2 \sum_{j\neq i}(p_j^{(k)} Q_{ji}- p_i^{(k)} Q_{ij}),
    \]
and note the discretization of \eqref{eq:NODE_1} is 
    \[
    \begin{aligned}
        \frac{p_i^{(k+1)}-p_i^{(k)}}{\Delta t}&=\sum_{j\neq i}\omega_{ij} (\psi_i^{(k)}-\psi_j^{(k)})\\
        &=\sum_{j\neq i} \frac{\pi_j Q_{ji} (\psi_i^{(k)}-\psi_j^{(k)})_+}{p_j^{(k)}}p_j^{(k)}+\left[-\sum_{j\neq i}\frac{\pi_i Q_{ij}(\psi_i^{(k)}-\psi_j^{(k)})_-}{p_i^{(k)}}\right]p_i^{(k)}.
    \end{aligned}
    \]
Combining both discretizations, the update of the \texttt{Chi-squared} method is given by
    \begin{multline*}
    p_i^{(k+2)}=p_i^{(k)}+ (\Delta t)^2 \sum_{j\neq i}\left[Q_{ji} + \frac{2-\gamma(t) \Delta t}{\Delta t}\frac{\pi_j Q_{ji} (\psi_i^{(k)}-\psi_j^{(k)})_+}{p_j^{(k)}}\right]p_j^{(k)}\\ 
    + (\Delta t)^2 \left[-Q_{ij} - \sum_{j\neq i}\frac{\pi_i Q_{ij}(\psi_i^{(k)}-\psi_j^{(k)})_-}{p_i^{(k)}}\right] p_i^{(k)}.
    \end{multline*}
This is a nonlinear jump process from $p_i^{(k)}$ to $p_i^{(k+2)}$. As a comparison, the jump process from MH is
    \[
    p_i^{(k+1)}=p_i^{(k)}+\Delta t \sum_{j\neq i} \Big[Q_{ji} p_j^{(k)}- Q_{ij} p_i^{(k)}\Big].
    \]
\subsection{\texttt{KL} method}

We pick $f(x)=x\log x$ in \eqref{eq:phi-average} and \eqref{eq:phi-divergence}, which results in $\theta_{ij}(p)=\dfrac{\frac{p_j}{\pi_j}-\frac{p_i}{\pi_i}}{\log\frac{\pi_i p_j}{\pi_j p_i}}$ and $\mathcal{U}(p)=\D_{f}(p \| \pi)=\sum_{i=1}^n p_i\log\frac{p_i}{\pi_i}$. We can compute 
    \[
    \frac{\partial \theta_{ij}(p)}{\partial p_i}=\frac{\frac{\log\frac{p_i}{\pi_i}-\log\frac{p_j}{\pi_j}}{\pi_i}-(\frac{p_i}{\pi_i}-\frac{p_j}{\pi_j})\frac{\pi_i}{p_i}\frac{1}{\pi_i}}{(\log\frac{p_i}{\pi_i}-\log\frac{p_j}{\pi_j})^2}=\frac{\log\left(\frac{\pi_j p_i}{\pi_i p_j}\right)-1+ \left(\frac{\pi_j p_i}{\pi_i p_j}\right)^{-1}}{\log^2\left(\frac{\pi_j p_i}{\pi_i p_j}\right)}\frac{1}{\pi_i}\,.
    \]
Plugging into \eqref{eq:NODE}, we obtain \eqref{eq:KL}.

\subsection{\texttt{log-Fisher} method}

We still select  $f(x)=x\log x$ for $\theta(x,y)=\frac{x-y}{f'(x)-f'(y)}$ just as in \texttt{KL} method, and plug it in the relative Fisher information $\I(p\| \pi)$
    \[
       \I(p \| \pi) = \frac{1}{4}\sum_{i,j=1}^n \omega_{ij}\theta_{ij}(p) (f'(\frac{p_i}{\pi_i})-f'(\frac{p_j}{\pi_j}))^2=\frac{1}{4}\sum_{i=1}^n \sum_{j\neq i} \omega_{ij}(\log \frac{p_i}{\pi_i}-\log\frac{p_j}{\pi_j})(\frac{p_i}{\pi_i}-\frac{p_j}{\pi_j}).
    \]
Note that $
    \frac{\partial}{\partial p_i} \I(p\| \pi_i)=\frac{1}{2\pi_i}\sum_{j\neq i}\omega_{ij}[\log\left(\frac{\pi_j p_i}{\pi_i p_j}\right)+1-\left(\frac{\pi_j p_i}{\pi_i p_j}\right)^{-1}]$. Hence, \eqref{eq:NODE} turns into \eqref{eq:log-Fisher}.
}

\section{Hessian and geodesic convexity}\label{app:Hessian}

We follow the setting and notations of discussions in \cite{Mielke2013Geodesic}. Consider the space $\P(V)$ of probability measures on a graph $G=(V,E,\omega)$, with the Onsager's response matrix $\mathbb{K}$ defined in \eqref{def:Onsager_response}, let $\{p(t)\}_{t \geq 0}$ be a geodesic on $\P(V)$, then it satisfies the geodesic equations:
    \[
    \left\{
    \begin{aligned}
        \dot{p}(t)&=\psi \mathbb{K}\\
        \dot{\psi}(t)&= -\nabla_p \left(\frac{1}{2}\psi  \mathbb{K} \psi^{\top}\right),
    \end{aligned}\right.
    \]

Any functional $\mathcal{U}(p)$ that is geodesically $\lambda$-strongly convex can be characterized by
    \[\frac{\mathrm{d}^2}{\mathrm{d}t^2}\mathcal{U}(p(t))\geqslant \lambda \dot{p}(t)\mathbb{K}^{\dagger}(p(t))(\dot{p}(t))^{\top}.\]

In our case, $\mathbb{K}(p)$ does not depend on $p$. Here we denote $\mathrm{D}^2\mathcal{U}(p)$ as the Hessian of $\mathcal U(p)$ in the Euclidean space. The geodesic equation is simplified as 
    \[\left\{
    \begin{aligned}
        \dot{p}(t)&=\psi K\\
        \dot{\psi}(t)&=0\,.
    \end{aligned}\right.
    \]
Thus $\mathcal{U}(p)$ is geodesically $\lambda$-strongly convex if and only if $\dot{p}\mathrm{D}^2\mathcal{U}(p)\dot{p}^{\top}\geqslant \lambda \dot{p}K^{\dagger}\dot{p}^{\top}$, or equivalently, $\psi K \mathrm{D}^2\mathcal{U}(p)K^{\top} \psi^{\top}\geqslant \lambda \psi K \psi^{\top}$. Thus under the flat metric $K$, we can say $\mathcal{U}(p)$ is geodescially $\lambda$-strongly convex, if there exists some $\lambda>0$, such that for any $t$, the Rayleigh quotient problem has a lower bound along the dynamics:
    \[\lambda\leqslant  \min_{\psi \Id_n^{\top}=0} \dfrac{\psi K \mathrm{D}^2\mathcal{U}(p(t))K^{\top}\psi^{\top}}{\psi K\psi^{\top}}.\]

In the \texttt{con-Fisher} method, recall $\mathcal{U}(p)=\I(p\| \pi)=\frac{1}{4}\sum_{i=1}^n \sum_{j\neq i}\omega_{ij}\theta_{ij}(\log \frac{p_i}{\pi_i}-\frac{p_j}{\pi_j})^2$. Thus $\partial_{ii} \mathcal{U}(p)= \sum_{j\neq i} \frac{\omega_{ij}\theta_{ij}}{p_i^2} [1-\log \left(\frac{\pi_j p_i}{\pi_i p_j}\right)]$ and $\partial_{ij} \mathcal{U}(p)= -\dfrac{\omega_{ij}\theta_{ij}}{p_i p_j}$, thus the Hessian $\mathrm{D}^2\mathcal{U}(p)$ is given by
    \[
\begin{bmatrix}
    \displaystyle\sum_{j\neq 1} \frac{\omega_{1j}\theta_{1j}}{p_1^2} [1-\log \left(\frac{\pi_j p_1}{\pi_1 p_j}\right)] & -\dfrac{\omega_{12}\theta_{12}}{p_1 p_2} & \cdots & -\dfrac{\omega_{1n}\theta_{1n}}{p_1 p_n}\\
    -\dfrac{\omega_{21}\theta_{21}}{p_2 p_1} & \displaystyle\sum_{j\neq 2} \frac{\omega_{2j}\theta_{2j}}{p_2^2} [1-\log \left(\frac{\pi_j p_2}{\pi_2 p_j}\right)] & \cdots & -\dfrac{\omega_{2n}\theta_{2n}}{p_2 p_n}\\
    \vdots & \cdots & \ddots & \vdots\\
    -\dfrac{\omega_{n1}\theta_{n1}}{p_n p_1} & -\dfrac{\omega_{n2}\theta_{n2}}{p_n p_2} & \cdots & \displaystyle\sum_{j\neq n} \frac{\omega_{nj}\theta_{nj}}{p_n^2} [1-\log \left(\frac{\pi_j p_n}{\pi_n p_j}\right)]
\end{bmatrix}.
\]

In the asymptotical limit $\abs{\frac{p_i}{\pi_i}-1}\propto \mathcal{O}(10^{-k})$ for some $k\gg 1$, then
    \[
        \log \frac{\pi_j p_i}{\pi_i p_j}\approx (c_i-c_j)10^{-k};\qquad
        \frac{1}{p_i^2}\approx \frac{1}{\pi_i^2}-\frac{2c_i}{p_i^2}10^{-k};\qquad\textrm{and}\qquad
        \frac{1}{p_i p_j}\approx \frac{1}{\pi_i \pi_j}-\frac{c_i+c_j}{\pi_i \pi_j}10^{-k}.
    \]
Thus the Hessian $\mathrm{D}^2\mathcal{U}(p)$ is approximated by
    \[
    \begin{aligned}
    &\begin{bmatrix}
     \frac{1}{\pi_1} & 0 & \cdots & 0\\
     0 & \frac{1}{\pi_2} & \vdots & 0\\
    0 & 0 & \ddots & 0\\
    0 & 0 & \cdots & \frac{1}{\pi_n}
\end{bmatrix}\begin{bmatrix}
    \sum_{j\neq 1}\omega_{1j}\theta_{1j} & -\omega_{12}\theta_{12} & \cdots & -\omega_{1n}\theta_{1n}\\
    -\omega_{21}\theta_{21} & \sum_{j\neq 2}\omega_{2j}\theta_{2j} & \vdots & -\omega_{2n}\theta_{2n}\\
    \vdots & \cdots & \ddots & \vdots\\
    -\omega_{n1}\theta_{n1} & \omega_{n2}\theta_{n2} & \cdots & \sum_{j\neq n}\omega_{nj}\theta_{nj}
\end{bmatrix}\begin{bmatrix}
    \frac{1}{\pi_1} & 0 & \cdots & 0\\
    0 & \frac{1}{\pi_2} & \vdots & 0\\
    0 & 0 & \ddots & 0\\
    0 & 0 & \cdots & \frac{1}{\pi_n}
\end{bmatrix}\\
&=\mathrm{diag}(\pi^{-1})K\mathrm{diag}(\pi^{-1}).
\end{aligned}
    \]
Moreover, $\mathrm{D}^2\mathcal{U}(p)\vert_{p=\pi}=\mathrm{diag}(\pi^{-1})K\mathrm{diag}(\pi^{-1}).$

Let's go back to the Rayleigh quotient problem. As $K$ is symmetric, and $\mathrm{Ker}(K) = \mathrm{span}(\Id_n)$, the singular value decomposition (SVD) of $K$ is given by
    \[ K = U \Sigma U^\top = \widehat{U} \widehat{\Sigma} \widehat{U}^\top. \]
Here $U\in\mathbb{R}^{n \times n}$ is an orthogonal matrix, $\Sigma = \mathrm{diag}(\sigma_1, \dots, \sigma_{n-1}, 0)$ with $\sigma_1 \geq \dots \geq \sigma_{n-1}>0$, $\widehat{U}\in\mathbb{R}^{n\times (n-1) }$ denotes the matrix formed by the first $n-1$ columns of $U$, and $\widehat{\Sigma} = \mathrm{diag}(\sigma_1, \dots, \sigma_{n-1})$. Then, we can reformulate the Rayleigh quotient as
    \[
     \min_{\psi\Id_n^\top = 0}  \frac{\psi \widehat{U} \widehat{\Sigma} \widehat{U}^\top D^2\mathcal U(p) \widehat{U} \widehat{\Sigma} \widehat{U}^\top \psi^\top}{\psi \widehat{U} \widehat{\Sigma} \widehat{U}^\top \psi^\top}.
    \]
Denote $\varphi = \psi \widehat{U} \sqrt{\widehat{\Sigma}}$. As both $\widehat{U}\in\mathbb{R}^{n\times (n-1)}$, $\widehat{\Sigma}\in\mathbb{R}^{(n-1) \times (n-1) }$ are matrices with full rank, $\varphi\in\mathbb{R}^{n-1}$ explores the entire space $\mathbb{R}^{n-1}$ as $\psi$ goes over $\mathbb{R}^n$. We can further rewrite it as 
    \begin{equation}\label{eq:lambda-min}
      \min_{\varphi\in\mathbb{R}^{n-1}}  \frac{\varphi \sqrt{\widehat{\Sigma}}\widehat{U}^\top D^2\mathcal U(p) \widehat{U} \sqrt{\widehat{\Sigma}} \varphi^\top}{\varphi \varphi^\top} = \lambda_{\min}(\sqrt{\widehat{\Sigma}}\widehat{U}^\top D^2\mathcal U(p) \widehat{U} \sqrt{\widehat{\Sigma}}).
    \end{equation}

\section{Other proofs}\label{app:proof} 

\begin{proof}[Proof to \Cref{lem:unique}]
Recall 
    \[\frac{\partial \mathrm I(p\|\pi)}{\partial p_i} = \frac{1}{2}  \sum_{j=1, j\neq i}^n \frac{\omega_{ij}}{\pi_i} (\log\frac{p_i}{\pi_i} - \log\frac{p_j}{\pi_j}) + \frac{\omega_{ij}}{p_i}(\frac{p_i}{\pi_i} - \frac{p_j}{\pi_j}). \]
It is straightforward to verify that when $p=\pi$, we have $\nabla_p \mathrm I(p\|\pi)= 0$.

On the other hand, suppose $\nabla_p \mathrm{I}(p\|\pi) = 0$ and assume $\frac{p_{i_1}}{\pi_{i_1}}\geqslant \frac{p_j}{\pi_j}$ for any $j\in V$. We will in the end show that $\frac{p_j}{\pi_j}=\frac{p_{i_1}}{\pi_{i_1}}$, thus $p=\pi$ in $\P(V)$.

For any $i\in V$, denote the neighbors of vertex $i$ as $N(i) = \{j\in V \ |\ \omega_{ij}>0\}$. Note that
  \[
   0=\frac{\partial \mathrm{I}(p\|\pi)}{\partial p_{i_1}} =\frac12 \sum_{j\in N(i_1)} \frac{\omega_{i_1 j}}{\pi_{i_1}} (\log\frac{p_{i_1}}{\pi_{i_1}} - \log\frac{p_j}{\pi_j}) + \frac{\omega_{i_1j}}{p_{i_1}}(\frac{p_{i_1}}{\pi_{i_1}} - \frac{p_j}{\pi_j})\geqslant 0,
   \]
as $\frac{p_{i_1}}{\pi_{i_i}}\geqslant \frac{p_{j}}{\pi_j}$ and $\omega_{i_1 j}>0$. This implies that for any $j\in N(i_1)$, $\frac{p_j}{\pi_j}=\frac{p_{i_1}}{\pi_{i_i}}$.

As the graph $G=(V,E,\omega)$ is connected, for any node $j$, there exists a path $i_1\to i_2 \to \cdots \to i_m=j$ for some $m\leqslant n$, such that $\omega_{i_{k}\, i_{k+1}}>0$ for $k=1,\ldots,m-1$. From the previous step, we have shown that $i_2\in N(i_1)$ implies that $\frac{p_{i_2}}{\pi_{i_2}}=\frac{p_{i_1}}{\pi_{i_i}}$. Thus we can repeat this process for $i_2,\ldots,i_m$, and prove that
    \[\frac{p_{i_1}}{\pi_{i_1}}=\frac{p_{i_2}}{\pi_{i_2}}=\cdots=\frac{p_{j}}{\pi_{j}}.\]
In conclusion, we have shown that \( \frac{p_j}{\pi_j} = \frac{p_{i_1}}{\pi_{i_1}} \) for all \( j \in V \). Since \( p \) is a probability function and must sum to one, it follows that \( p = \pi \).

\end{proof}

\begin{proof}[Proof of \Cref{thm:positivity}]
Fix $r_0$, $F_{r_0}(r)$ is invertible on $(0,r_0]$, and we denote its inverse by $F^{-1}_{r_0}(C)$, whose domain is $[0,\infty)$ and range is $(0,r_0]$. Note that for $r\in (0,r_0]$ and $C\in (0,\infty)$, 
    \begin{equation}\label{eq:Fproperty1}
       F_{r_0}(r)\leqslant C,\qquad \textrm{if and only if}\qquad F_{r_0}^{-1}(C)\leqslant r.
    \end{equation}

Pick $C \in (0,\infty)$. Let us suppose $F^{-1}_{r_0}(C)=r$ for some $r\in (0,r_0)$, that is $F_{r_0}(r)=C$. For any $r_1>r_0$, $F_{r_1}(r)>F_{r_0}(r)=C$ implies that $F_{r_1}^{-1}(C)>r=F_{r_0}^{-1}(C)$. Now for fixed $C$, define $G_C(r)=F_r^{-1}(C)$, thus $G_C(r)$ is a strictly increasing function, satisfying that $G_C(r)>0$ as long as $C>0$ and $r>0$. And 
    \begin{equation}\label{eq:Fproperty2}
        G_C(r_0)=F_{r_0}^{-1}(C)\leqslant r,\qquad\textrm{if and only if}\qquad F_{r_0}(r)\leqslant C.
    \end{equation}

Given that the Hamiltonian $\mathcal{H}(p(t),\psi(t))$ is nonincreasing as shown in \eqref{eq:Decay-Hamiltonian}, and $\mathcal{H}(p(0),\psi(0))$ is bounded, let $r_i=\frac{p_i}{\pi_i}$, for any $t$,  
    \[\mathcal{U}(p(t))=\frac{1}{4}\sum_{i,j=1}^n F_{r_i}(r_j)\omega_{ij}\leqslant \mathcal{H}(p(t),\psi(t))\leqslant C,\]
which implies that $F_{r_i}(r_j)\omega_{ij}\leqslant C$ for any pair $(i,j)$.

Assume $r_{i_1}\geqslant r_i$ for any $i\in V$. Note that there exists an index $i$ such that $p_i\geqslant \frac{1}{n}$. Then
    \[r_{i_1}=\max r_i\geqslant r_{i}=\frac{p_{i}}{\pi_{i}}\geqslant \frac{1}{n \pi_{i}}\geqslant \frac{1}{n\max \pi_i}>0.\]

Since the graph $G$ is connected, for any node $j$, there exists a path $i_1\to i_2\to \cdots \to i_m=j$ for some $m\leqslant  n$, such that $\omega_{i_{k}\, i_{k+1}}>0$ for $k=1,\ldots,m-1$. Let $\bar{\omega}=\displaystyle\min_{k} \omega_{i_k\, i_{k+1}}>0$. For $k=1,\ldots,m-1$, $F_{r_{i_k}}(r_{i_{k+1}})\omega_{i_k\, i_{k+1}}\leqslant C$ implies that $F_{r_{i_k}}(r_{i_{k+1}})\leqslant \frac{C}{\omega_{i_k i_{k+1}}}\leqslant \frac{C}{\bar{\omega}}$.

From node $i_1$ to $i_2$, given that $r_{i_1}\geqslant r_{i_2}$ and $F_{r_{i_1}}(r_{i_2})\leqslant \frac{C}{\bar{\omega}}$, by \eqref{eq:Fproperty1}, \eqref{eq:Fproperty2} and $G_C(r)$ is strictly increasing, we have
    \[r_{i_2}\geqslant F_{r_{i_1}}^{-1}(\frac{C}{\bar{\omega}})=G_{C/ \bar{\omega}}(r_{i_1})\geqslant G_{C/ \bar{\omega}}(\frac{1}{n\max\pi_i})>0.\]

From node $i_2$ to $i_3$, if $r_{i_2}\geqslant r_{i_3}$, we can repeat the above step and yield $r_{i_3}\geqslant G_{C/ \bar{\omega}}(r_{i_2})$; otherwise $r_{i_3}>r_{i_2}\geqslant G_{C / \bar{\omega}}(r_{i_1})\geqslant G_{C / \bar{\omega}}(r_{i_2})$. In either case,
    \[r_{i_3}\geqslant G_{C / \bar{\omega}}(r_{i_2})\geqslant G_{C / \bar{\omega}}(G_{C / \bar{\omega}}(r_{i_1}))\geqslant  G_{C / \bar{\omega}}^{(2)}(\frac{1}{n\max \pi_i})>0.\]
We may repeat this argument and yield that $r_{i_k}>0$ for any node $i_k$ on this path,
    \[r_{i_k}\geqslant G^{(k-1)}_{C / \bar{\omega}}(\frac{1}{n\max\pi_i})>0.\]
Thus $\varepsilon=(\min \pi_i)\cdot  G^{(n-1)}_{C / \bar{\omega}}(\frac{1}{n\max\pi_i})>0$ is the lower bound of $p_i(t)$. 

\end{proof}

\begin{proof}[Proof to \Cref{lem:lambda-strong}]
Follow the notations used in \Cref{app:Hessian}, Recall \eqref{eq:lambda-min} and 
    \[\mathrm{D}^2\mathcal{U}(p)\vert_{p=\pi}=\mathrm{diag}(\pi^{-1})K\mathrm{diag}(\pi^{-1})=\mathrm{diag}(\pi)^{-1} \widehat{U} \widehat{\Sigma} \widehat{U}^\top \mathrm{diag}(\pi)^{-1}.\] 
The minimal positive eigenvalue $\lambda$ at $p=\pi$ is
        \begin{align*}
     \lambda  &  = \lambda_{\min}(\sqrt{\widehat{\Sigma}}\underbrace{\widehat{U}^\top \mathrm{diag}(\pi)^{-1} \widehat{U}}_{S} \widehat{\Sigma} \underbrace{\widehat{U}^\top \mathrm{diag}(\pi)^{-1} \widehat{U}}_{S^\top} \sqrt{\widehat{\Sigma}})  \\
     & = \lambda_{\min}((S^\top 
     \sqrt{\widehat{\Sigma}})^\top \widehat{\Sigma} (S^\top \sqrt{\widehat{\Sigma}})).
    \end{align*}
Since the non-singular conjugate of positive definite matrix is still positive definite, we can show that $\lambda > 0$ as long as $S^\top \sqrt{\widehat{\Sigma}}$ is non-singular. 

In fact, suppose for $x \in\mathbb{R}^{n-1}$, $S^\top\sqrt{\widehat{\Sigma}}x^\top = 0$, thus $(\sqrt{\widehat{\Sigma}} x^\top)^\top S^\top \sqrt{\widehat{\Sigma}} x^\top = 0$. From the change of variable $y^\top = \widehat{U}\sqrt{\widehat{\Sigma}} x^\top \in \mathbb{R}^n,$ we obtain $y \mathrm{diag}(\pi)^{-1} y^\top = 0.$ This leads to $y=0$, i.e., $\widehat{U}\sqrt{\widehat{\Sigma}} x^\top = 0.$ Now since $\widehat{U}\in\mathbb{R}^{n\times (n-1)}$ is full rank, $\sqrt{\widehat{\Sigma}}$ is non-singular, we have $x=0$. As a result, $S^\top \sqrt{\Sigma}$ is non-singular.
\end{proof}

{\color{blue}\section{More numerical results}\label{app:numerical}

In this section, we report heuristic observations on sampling behavior generated by the \texttt{log-Fisher} method. We use the `rose' dataset discretized on a larger lattice of $100\times 100$  nodes. The target distribution is constructed by assigning mass $p_i\in \left\{\frac{1}{10},1\right\}$ to each node: larger masses are placed on the support of the `rose', while smaller masses are assigned to the remaining nodes for the background. The target distribution is constructed to be strict positive. Parameters are set as:
    \[\textrm{sampling size~}M=10^6,\quad\textrm{default step size~}\Delta t= 0.1\,\quad\textrm{total iterations~}N=3\times 10^4.\]
We again employ a constant damping parameter $\gamma(t)=2\sqrt{\lambda_*}$. Owing to the large state space ($10^4$ nodes) and the multi-chain nature of our proposed jump process, these experiments are carried out on the UCSB computing cluster. A notable qualitative difference between the \texttt{log-Fisher} method and MH sampling is observed in \Cref{fig:smooth-sampler}: throughout the jump process, the empirical distributions produced by the \texttt{log-Fisher} method appear significantly smoother, whereas those generated by MH exhibit a mosaic effect. We conjecture that this behavior is related to the acceptance–rejection matrix \eqref{eq:new-accept} inherent in the \texttt{log-Fisher} method,
    \begin{equation*}
    (\bar{A}_{\psi})_{ij}=\max\left\{\dfrac{1-\frac{\pi_i}{\pi_j}\frac{p_j}{p_i}}{\log\left(\frac{\pi_j}{\pi_i}\frac{p_i}{p_j}\right)}(\psi_j-\psi_i),0 \right\} A_{ij},
    \end{equation*}
which compares the local ratios $\frac{p_i}{\pi_i}$ and $\frac{p_j}{\pi_j}$ across neighboring nodes along edges. 

}

\begin{figure}[hp!tb]
    \centering
    \includegraphics[width=\linewidth]{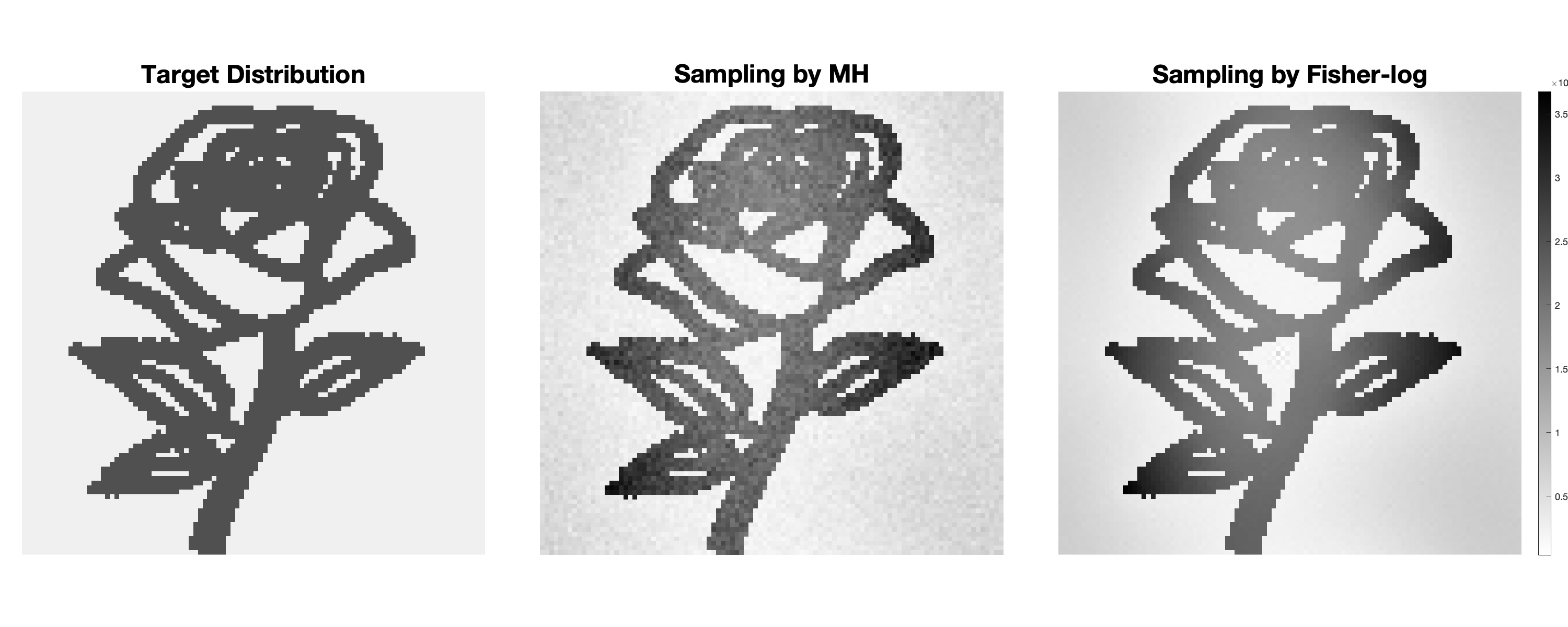}
    \caption{Sampling on a $100\times 100$ lattice graph for the input grayscale image `rose'. The target distribution is set to be a piecewise-constant function. Across all simulations, the empirical distributions produced by MH appear less structured than those obtained with the \texttt{log-Fisher} method.}
    \label{fig:smooth-sampler}
\end{figure}

\vskip 0.2in
\bibliography{AMCMC.bib}

\end{document}